\newcommand{\Z}{\ensuremath{\mathbb{Z}}}
\newcommand{\C}{\ensuremath{\mathbb{C}}}
\newcommand{\R}{\ensuremath{\mathbb{R}}}
\newtheorem{thm}{Theorem}[section]
\newtheorem*{thm2}{Theorem}
\newtheorem{cor}[thm]{Corollary}
\newtheorem{lemma}[thm]{Lemma}
\newtheorem{prop}[thm]{Proposition}
\theoremstyle{remark}
\newtheorem{rmk}[thm]{Remark}
\theoremstyle{definition}
\newtheorem{defin}[thm]{Definition}
\newtheorem{nota}[thm]{Notation}
\newtheorem{ex}[thm]{Example}
\newcommand{\kk}{k}
\newcommand{\e}{\hat\varepsilon}
\def\cal#1{\mathcal{#1}}
\newcommand{\on}{\operatorname}
\begin{document}

\title{Skew group algebras of Jacobian algebras}
\author{Simone Giovannini}
\address{Simone Giovannini: Dipartimento di Matematica, Universit\`{a} di Padova, Via Trieste 63, 35121 Padova (Italy)}
\email{sgiovann@math.unipd.it}
\author{Andrea Pasquali}
\address{Andrea Pasquali: Dept.~of Mathematics, Uppsala University, P.O.~Box 480, 751 06 Uppsala, Sweden}
\email{andrea.pasquali@math.uu.se}
\thanks{The first author is supported by the PhD programme in Mathematical Sciences, Department of Mathematics, University of Padova, and by grants BIRD163492 and DOR1690814 of University of Padova. The second author is supported by the PhD programme in Mathematics, Uppsala University and by the Swedish Research Council.}
\keywords{Skew group algebra, quiver with potential, self-injective algebra, 2-representation finite algebra}
\subjclass[2010]{16G20, 16S35}

\maketitle

\begin{abstract}
For a quiver with potential $(Q,W)$ with an action of a finite cyclic group $G$, we study the skew group algebra $\Lambda G$ of the Jacobian algebra $\Lambda = \mathcal P(Q, W)$. 
By a result of Reiten and Riedtmann, the quiver $Q_G$ of a basic algebra $\eta( \Lambda G) \eta$ Morita equivalent to $\Lambda G$ is known.
Under some assumptions on the action of $G$, 
we explicitly construct a potential $W_G$ on $Q_G$ such that $\eta(\Lambda G) \eta\cong \mathcal P(Q_G , W_G)$. The original quiver with potential can then be recovered by 
the skew group algebra construction with a natural action of the dual group of $G$. If $\Lambda$ is self-injective, then $\Lambda G$ is as well, and we investigate this case.
Motivated by Herschend and Iyama's characterisation of 2-representation finite algebras, we study how cuts on $(Q,W)$ behave with respect to our construction.
\end{abstract}

\tableofcontents

\section{Introduction}
The aim of this article is to study the skew group algebra of a Jacobian algebra coming from a quiver with potential. 
Skew group algebras were first studied from the point of view of representation theory in \cite{RR85}.
If $\Lambda$ is a finite-dimensional algebra over a field $\kk$ with
a finite group $G$ acting on $\Lambda$ by automorphisms, then the skew group algebra $\Lambda G$ shares many representation-theoretic properties with $\Lambda$, 
often incarnated in properties of functors between $\on{mod} \Lambda$ and $\on{mod}\Lambda G$. 
If $\Lambda$ is the quotient of a path algebra by an admissible ideal, then Reiten and Riedtmann describe the quiver $Q_G$ of (a basic version of) $\Lambda G$. 
This description is complete if $G$ is cyclic, and Demonet extended it to a complete description for arbitrary finite groups if $\Lambda$ is hereditary \cite{Dem10}.
However, describing the relations on this quiver is difficult in general. 

Something can be said for Jacobian algebras of quivers with potential (QPs).
These were introduced in \cite{DWZ08}, and have since found applications in cluster theory via the Amiot cluster category \cite{Ami09}. 
An action of $G$ on a QP $(Q, W)$ induces an action on the corresponding Ginzburg dg algebra defined in \cite{Gin06}.
In \cite{LM18}, it is shown that the skew group dg algebra is Morita equivalent to the Ginzburg dg algebra associated to another QP. The quiver is $Q_G$, and the potential is the image of $W$ under a natural map.
Moreover, in \cite[\S4.5]{LM18} the potential is expressed as a linear combination of cycles of $Q_G$ in some examples.

In this article we eschew the dg setting and focus on Jacobian algebras of QPs.
The Jacobian algebra $\mathcal P(Q, W)$ is the 0-th homology of the Ginzburg dg algebra, and can be identified with the algebra obtained by imposing on $Q$ the relations coming 
from all cyclic derivatives of $W$. In particular, Le Meur's result implies that $\mathcal P(Q, W)G$ is Morita equivalent to the Jacobian algebra of a QP.
Under some assumptions on the action, we explicitly construct a potential $W_G$ on $Q_G$ such that we have:
\begin{thm2}[Theorem \ref{thm:main}]
  Let $(Q, W)$ be a QP, and let $\Lambda = \mathcal P(Q, W)$. Let $G$ be a finite cyclic group acting on $(Q, W)$ as per the assumptions 
  \ref{ass:field}-\ref{ass:cycles} of \S\ref{subsec:assumptions}. Let $Q_G$ be the quiver constructed in 
  \S\ref{sec:quiver}, $W_G$ the potential on $Q_G$ defined in \S\ref{subsec:cycles}, and $\eta\in \Lambda G$ the idempotent defined in \S\ref{subsec:assumptions}. Then
  \begin{align*}
    \mathcal P(Q_G, W_G) \cong \eta(\mathcal P(Q, W)G)\eta.
  \end{align*}
\end{thm2}
As observed in \cite[\S5]{RR85}, there is a natural action of the dual group $\hat G$ on $\Lambda G$. In our setting, this action restricts to the basic algebra $\eta( \Lambda G) \eta$. 
Reiten and Riedtmann prove that $(\Lambda G)\hat G$ is Morita equivalent to $\Lambda$ if $G$ is abelian, so it is natural to ask whether one gets back the original QP by applying this second skew group algebra construction.
To do so, one needs to find assumptions which guarantee that $W_G$ is fixed by $\hat G$ as an element of $\kk Q_G\cong \eta((\kk Q)G)\eta$, and which are preserved under taking skew group algebras. 
If $G= \Z/2\Z$, it was shown in \cite{AP17} that indeed we get $(Q,W)$ back (and in fact the Ginzburg dg algebra of $(Q,W)$).
We extend Amiot and Plamondon's result to our setting (assumptions \ref{ass:field}-\ref{ass:cycles} of \S\ref{subsec:assumptions}), via a direct check using our formula for $W_G$:
\begin{thm2}[Proposition~\ref{prop:main2b} and Corollary~\ref{cor:main2}]
  There is an isomorphism of quivers $\phi :(Q_G)_{\hat G}\cong Q$ such that, if we extend it to an isomorphism between the corresponding path algebras, we have $\phi( (W_G)_{\hat G}) = W$. This induces an algebra isomorphism
  \begin{align*}
    \theta \left( \left(\eta\left( \Lambda G \right)\eta\right) \hat G\right)\theta \cong \Lambda,
  \end{align*}
  where $\Lambda = \mathcal P(Q, W)$ and $\theta$ is the idempotent defined in Section~\ref{sec:dual}.
\end{thm2}

A simple example of the above construction which is good to have in mind is illustrated in Example~\ref{ex:triangles}, and specifically in the quivers of Figure~\ref{fig:triangle} and Figure~\ref{fig:sga_triangle}. 
Here we take $Q$ to be the QP of the \mbox{3-preprojective} algebra of type ${\rm A}_4$, so the potential is given by the sum of all 3-cycles with alternating signs. The group $G= \Z/3\Z$ acts by rotations in the plane, 
and the quiver $Q_G$ is given in Figure~\ref{fig:sga_triangle}. Here the action of $\hat G$ permutes the vertices $4^i$ and multiplies the arrow $\tilde \delta$ by a third root of unity, and one can check that 
by performing the same construction on $Q_G$ one gets $Q$ back.

We look with special interest at the case where $\Lambda = \mathcal P(Q,W)$ is self-injective.
This is because of the relationship between self-injective QPs and higher (in this case 2-) Auslander-Reiten theory. 
Namely, self-injective Jacobian algebras are precisely the 3-preprojective algebras of 2-representation finite algebras (see \cite{HI11b}).
Any such 2-representation finite algebra can be constructed from $(Q, W)$ together with the combinatorial datum of a so-called cut, as a truncated Jacobian algebra.
In \cite{RR85} it is proved that the skew group algebra construction preserves self-injectivity. We show that it also preserves the property of being Frobenius, and compute a Nakayama automorphism of $\Lambda G$
if the bilinear form on $\Lambda$ is $G$-equivariant. As a consequence, we prove that if $\Lambda$ is the Jacobian algebra of a planar self-injective QP and we take $G$ generated by a Nakayama automorphism,
then $\Lambda  G$ is symmetric. 
We show that \mbox{$G$-invariant} cuts on $(Q,W)$ induce cuts on $(Q_G, W_G)$, and the corresponding truncated Jacobian algebras are obtained from each other by a skew group algebra construction. Thus we have that, under some hypotheses, $2$-representation finiteness is preserved under taking skew group algebras (note that an analogous result was obtained, using different methods, for some $d$-representation infinite algebras in \cite{Gio19}).
Moreover we give some sufficient conditions on $(Q, W)$ which imply that all the truncated Jacobian algebras of $(Q_G, W_G)$ are derived equivalent.
It was recently shown in \cite{LM18}, by different methods, that in fact the property of being $d$-representation (in)finite is always preserved under taking skew group algebras.
An example where the 2-representation finite algebra is constructed from tensor product of Dynkin quivers is illustrated in Example~\ref{ex:tensor}.
We also look at a case where $\Lambda$ is not self-injective in Example~\ref{ex:nonselfinj}. Here we realise an Auslander algebra as a truncated Jacobian algebra, thus checking directly a special case of 
\cite[Theorem 1.3(c)(iv)]{RR85}.

There is a natural class of QPs with a group action satisfying our assumptions, namely rotation-invariant planar QPs.
Planar QPs were introduced in \cite{HI11b} as they behave particularly nicely when they have self-injective Jacobian algebras. It turns out that in all known examples of self-injective planar QPs 
a Nakayama automorphism acts by a rotation, hence they fit nicely in our setting.
Recently it has been shown that Postnikov diagrams have connections with planar self-injective QPs: in \cite{Pas17b} it is proved that the QP coming from an $(a,n)$-Postnikov diagram on a disk (as in 
\cite{BKM16}) is self-injective
if and only if the diagram is rotation invariant. Thus, our construction produces many examples of symmetric Jacobian algebras, one for every such Postnikov diagram.
An example is given as Example~\ref{ex:postnikov}.

The structure of this article is as follows. In Section~\ref{sec:prelim} we recall definitions and some facts about quivers with potential and skew group algebras. Moreover, we prove that skew group algebras 
of Frobenius algebras are again Frobenius. In Section~\ref{sec:setup} we explain our setup and assumptions, fix notation and present our main result. 
Section~\ref{sec:core} is devoted to proving Theorem~\ref{thm:main}.
In Section~\ref{sec:dual} we look at the $\hat G$-action on $\mathcal P(Q_G, W_G)$, and prove that we get back to $(Q, W)$ by taking 
the skew group algebra with respect to this action. 
In Section~\ref{sec:nakayama} we apply our results to planar rotation-invariant QPs. 
In Section~\ref{sec:cuts} we consider how cuts behave with respect to taking skew group algebras, and the consequences for truncated Jacobian algebras.
Section~\ref{sec:ex} consists of some examples which illustrate our construction. 

\paragraph{\textbf{Acknowledgements.}}
We would like to express our gratitude to Pierre-Guy Plamondon and Martin Herschend for their help and encouragement. We thank an anonymous referee for their feedback on the paper.
We also thank Patrick Le Meur for pointing out the reference \cite{LM18}.

\section{Preliminaries}\label{sec:prelim}

\subsection{Conventions}
We denote by $\kk$ a fixed field. 
Algebras are assumed to be associative unital finite dimensional $\kk$-algebras. We denote by $D  = \on{Hom}_{\kk}(-,\kk)$ the $\kk$-dual, in both directions.
Quivers are understood to be finite and connected.
For a quiver $Q$, we denote by $Q_0$ its set of vertices and by $Q_1$ its set of arrows.
We compose quiver arrows from right to left, as functions. For an arrow $\alpha$, we denote by $\mathfrak{s}(\alpha)$ and $\mathfrak{t}(\alpha)$ its start and target respectively.
We compose quiver arrows from right to left, as functions. For an arrow $\alpha$, we denote by $\mathfrak{s}(\alpha)$ and $\mathfrak{t}(\alpha)$ its start and target respectively.
If $p$ is a path in a quiver and $\alpha$ is an arrow, we use the notation $\alpha\in p$ to indicate that $\alpha$ appears as one of the arrows in $p$.
A relation of a quiver is a linear combination of paths with the same start and end.

Let $\Lambda$ be an algebra and $\varphi\colon \Lambda\to\Lambda$ be an algebra endomorphism. For a right $\Lambda$-module $M$, we define $M_\varphi$ to be the right $\Lambda$-module which is equal to $M$ as a vector space but whose action is given by $m\cdot \lambda=m\varphi(\lambda)$, for all $m\in M$ and $\lambda\in\Lambda$.
For a finite group $G$, we denote by $\kk G$ the corresponding group algebra.
If $X$ is a subset of a ring $A$, we denote by $\langle X\rangle$ the two-sided ideal of $A$ generated by $X$.

\subsection{Index of terminology}
Since the statements, constructions and proofs in this article are quite technical and notation-heavy, we collect here the main terminology we use. The definitions given here are not meant to be complete, but we refer to the position in the text where they are explained properly.\\

\begin{tabular}{|c|p{10cm}|l|}
	\hline
	Symbol & Description & Reference\\ \hline\hline
	$a(c)$ & The coefficient of the cycle $c$ in $W$. & Notation~\ref{not:potential}\\ \hline
	
	$*$ & The ``forgetful" action of $G$ on $Q$. & Notation~\ref{not:star}\\ \hline
	
	$b(\alpha)$ & For an arrow $\alpha$ between fixed vertices, we define $b(\alpha)$ by $g(\alpha)= \zeta^{b(\alpha)}\alpha$. & Notation \ref{not:b}\\ \hline

	Types (i)--(iv) & The types of cycles appearing in $W$, as per  assumption~\ref{ass:cycles}. & Notation~\ref{not:cycletypes}\\ \hline

	$e_\mu$ & A choice of idempotents of $kG$. & Notation~\ref{not:emu}\\ \hline
	
	$\mathcal E, \mathcal{E}', \mathcal E''$ & Chosen subsets of $Q_0$. & Notation~\ref{not:E}\\ \hline 
	
	$\eta$ & The Morita idempotent we choose for $\Lambda G$. & Notation~\ref{not:eta}\\ \hline
	
	Types $(1)$--$(4)$ & Types of arrows. Every arrow in $Q$ is in the $G$-orbit of an arrow of one of these types. & Notation~\ref{not:arrowtypes}\\ \hline
	
	$\tilde \alpha, \tilde \alpha^\mu$ & The arrows of $Q_G$ we define. & Notation~\ref{not:arrowtypes}\\ \hline
	
	$t(\alpha)$ & For $\alpha$ of type (1), $\mathfrak s(\alpha)\in g^{t(\alpha)}(\mathcal E)$. Otherwise, $t(\alpha)=0$. & Notation~\ref{not:t}\\ \hline 
	
	$\hat c$ & A chosen cycle in the $G$-orbit of a cycle $c$. & Notation~\ref{not:cmu}\\ \hline
	
	$\tilde c, \tilde c^\mu$ & Cycles in $Q_G$ we define. & Notation~\ref{not:cmu}\\ \hline
	
	$p(c), q(c)$ & Integers associated to cycles of type (ii) and (iii) respectively. & Notation~\ref{not:cmu}\\ \hline
	
	$W_G$ & The potential we define on $Q_G$. & Notation~\ref{not:wg}\\ \hline
	
\end{tabular}

\subsection{Skew group algebras}
Let $G$ be a finite group acting on an algebra $\Lambda$ by automorphisms.
\begin{defin}
  The \emph{skew group algebra} $\Lambda G$ is the algebra defined by:
  \begin{itemize}
    \item its underlying vector space is $\Lambda\otimes_\kk \kk G$;
    \item multiplication is given by 
      \begin{align*}
	(\lambda\otimes g)(\mu\otimes h) = \lambda g(\mu)\otimes gh
      \end{align*}
      for $\lambda, \mu\in \Lambda$ and $g, h\in G$, extended by linearity and distributivity.
  \end{itemize}
\end{defin}
There is a natural algebra monomorphism $\Lambda\to \Lambda G $ given by $\lambda\mapsto \lambda\otimes 1$. Notice that the algebra $\Lambda G$ is not basic in general.

\subsection{Quivers with potential}
We follow \cite{HI11b} in our presentation.
Let $Q$ be a quiver. Denote by $\widehat{\kk Q}$ the completion of $\kk Q$ with respect to the $\langle Q_1\rangle$-adic topology.
Define $$\on{com}_Q = \overline {\left[ \widehat{\kk Q}, \widehat{\kk Q}\right]}\subseteq \widehat{\kk Q},$$
where $\overline{\phantom{xx}}$ denotes closure. Thus $\widehat{\kk Q}/ \on{com}_Q$ has a topological basis consisting of cycles in $Q$.
In particular there is a unique continuous linear map 
\begin{align*}
  \sigma: \widehat{\kk Q}/ \on{com}_Q\to \widehat{\kk Q}
\end{align*}
induced by 
\begin{align*}
  \alpha_1\cdots \alpha_n \mapsto \sum_{m = 1}^n \alpha_m \cdots \alpha_n\alpha_1\cdots \alpha_{m-1}.
\end{align*}
For each $\alpha\in Q_1$ define $d_\alpha:\langle Q_1\rangle\to\widehat{\kk Q} $ to be the continuous linear map given by $d_{\alpha} (\alpha p ) = p$ and $d_{\alpha} (q) = 0$ if $q$ does not end with $\alpha$.
Define the \emph{cyclic derivative} with respect to an arrow $\alpha$ to be $\partial_\alpha = d_\alpha \circ \sigma: \langle Q_1\rangle/ \on{com}_Q\to\widehat{\kk Q} $.
It will be convenient to take derivatives with respect to multiples of arrows. For $\lambda \in \kk^*$, define $\partial_{\lambda\alpha}(c) = \lambda^{-1}\partial_\alpha(c)$. 
A \emph{potential} is an element $W\in \langle Q_1\rangle^3/(\langle Q_1\rangle^3\cap \on{com}_Q)$, i.e.,~a (possibly infinite) linear combination of cycles of length at least 3.
A potential is called \emph{finite} if it can be written as a finite linear combination of cycles.
By an abuse of notation, if $c$ is a cycle in $Q$ we will denote again by $c$ the corresponding element of $\langle Q_1\rangle^3/(\langle Q_1\rangle^3\cap \on{com}_Q)$ and consider it up to cyclic permutation of its arrows.
We call the pair $(Q, W)$ a \emph{quiver with potential (QP)} and define its Jacobian algebra to be 
\begin{align*}
  \cal P(Q, W) = \widehat{\kk Q}\bigg/\overline{ \langle \partial_\alpha W\ | \ \alpha\in Q_1\rangle}.
\end{align*}
In our setting, the completion will not play any role, due the following proposition.
\begin{prop}[{\cite[Proposition 2.3]{Pas17b}}]\label{prop:completion}
  If $W$ is a finite potential and the ideal $\langle \partial_\alpha W\ | \ \alpha\in Q_1\rangle\subseteq\kk Q $ is admissible, then 
  \begin{align*}
    \mathcal P(Q, W) \cong \kk Q\bigg/ \langle \partial_\alpha W\ | \ \alpha\in Q_1\rangle.
  \end{align*}
\end{prop}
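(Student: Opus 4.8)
The plan is to realise both sides as quotients of a single finite-dimensional algebra, namely the truncation $\kk Q/\langle Q_1\rangle^N$, and then to check that the two ideals one quotients by agree. Write $J = \langle \partial_\alpha W\mid \alpha\in Q_1\rangle\subseteq \kk Q$ for the algebraic Jacobian ideal, and $\widehat J$ for the ideal generated by the same elements inside $\widehat{\kk Q}$. Since $W$ is finite, each $\partial_\alpha W$ is a genuine element of $\kk Q$, so $J\subseteq \widehat J$ and the two generating sets coincide. By definition $\cal P(Q,W) = \widehat{\kk Q}/\overline{\widehat J}$, so the content of the statement is precisely that passing to the completion and taking the closure of the Jacobian ideal does not change the quotient.

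First I would exploit admissibility. By hypothesis there is an integer $N$ with $\langle Q_1\rangle^N\subseteq J\subseteq \langle Q_1\rangle^2$. Taking closures in $\widehat{\kk Q}$ and using $J\subseteq \widehat J\subseteq \overline{\widehat J}$, I get $\overline{\langle Q_1\rangle^N}\subseteq \overline{\widehat J}$. The point of this is that $\overline{\langle Q_1\rangle^N}$ is exactly the kernel of the canonical projection $\pi\colon \widehat{\kk Q}\to \kk Q/\langle Q_1\rangle^N$, so setting $A := \kk Q/\langle Q_1\rangle^N$ we have $A\cong \widehat{\kk Q}/\overline{\langle Q_1\rangle^N}$; since $\overline{\langle Q_1\rangle^N}\subseteq \overline{\widehat J}$ and $\langle Q_1\rangle^N\subseteq J$, both $\cal P(Q,W)$ and $\kk Q/J$ are quotients of this finite-dimensional $A$.

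The key step is then to identify $\overline{\widehat J}$ with $\pi^{-1}(\bar J)$, where $\bar J$ denotes the image of $J$ (equivalently of $\widehat J$) in $A$. Since $A$ is finite-dimensional over $\kk$, it carries the discrete topology and $\pi$ is continuous; hence $\pi(\overline{\widehat J})\subseteq \overline{\pi(\widehat J)} = \pi(\widehat J) = \bar J$, where the middle equality holds because $\bar J$ is a $\kk$-subspace of the finite-dimensional $A$ and so is already closed. The reverse inclusion $\bar J\subseteq \pi(\overline{\widehat J})$ is clear, so $\pi(\overline{\widehat J}) = \bar J$; combined with $\ker\pi = \overline{\langle Q_1\rangle^N}\subseteq \overline{\widehat J}$ this yields $\overline{\widehat J} = \pi^{-1}(\bar J)$.

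Finally I would read off the isomorphism. The composite $\kk Q\hookrightarrow \widehat{\kk Q}\twoheadrightarrow \widehat{\kk Q}/\overline{\widehat J}$ is surjective, since $\langle Q_1\rangle^N\subseteq \overline{\widehat J}$ already forces its image to be all of $A/\bar J = \widehat{\kk Q}/\overline{\widehat J}$, and its kernel is $\overline{\widehat J}\cap \kk Q = \pi^{-1}(\bar J)\cap \kk Q = J$, using $\langle Q_1\rangle^N\subseteq J$ for the last equality. This gives $\kk Q/J\cong \widehat{\kk Q}/\overline{\widehat J} = \cal P(Q,W)$. I expect the only delicate point to be the handling of the closure $\overline{\widehat J}$: one must make sure that taking the topological closure adds nothing beyond the finite-dimensional image, and this is exactly where admissibility — through the containment $\langle Q_1\rangle^N\subseteq J$ — does the real work, collapsing the problem to the discrete, finite-dimensional quotient $A$ on which closures are trivial.
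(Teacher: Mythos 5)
Your argument is correct. Note, however, that this paper does not prove the statement at all: it is imported as a black box from the reference cited in the proposition (Proposition~2.3 of that paper), so there is no in-paper proof to compare yours against. Your reduction to the finite-dimensional truncation $A=\kk Q/\langle Q_1\rangle^N$ is sound: admissibility gives $\langle Q_1\rangle^N\subseteq J$, the kernel of the continuous projection $\pi\colon\widehat{\kk Q}\to A$ is precisely $\overline{\langle Q_1\rangle^N}$, and since $\pi$ is a surjective ring homomorphism both $\pi(\widehat J)$ and the image of $J$ equal the ideal of $A$ generated by the classes of the $\partial_\alpha W$; as $\overline{\widehat J}$ is an ideal containing $\ker\pi$, your identification $\overline{\widehat J}=\pi^{-1}(\bar J)$ follows, and the surjectivity and kernel computations for $\kk Q\to\widehat{\kk Q}/\overline{\widehat J}$ are exactly right. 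One cosmetic point: the continuity of $\pi$ with discrete target and the closedness of $\bar J$ in $A$ are better justified by the fact that $\ker\pi=\overline{\langle Q_1\rangle^N}$ is open (it is a basic neighbourhood of $0$), rather than by finite-dimensionality of $A$, which by itself says nothing about the topology; the conclusion you draw is nevertheless valid, since in the discrete quotient topology every subset is closed.
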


\subsection{Self-injective algebras}
We need some facts and notation for Frobenius and self-injective algebras, see for instance \cite{Mur03} or \cite{HZ11}.
An algebra $\Lambda$ is \emph{self-injective} if it is injective as a right $\Lambda$\mbox{-m}odule. 
It is \emph{Frobenius} if there is a bilinear form $(-,-)$ on $\Lambda$ which is nondegenerate and multiplicative (i.e.,~
$(a, bc\mbox{) = (}ab,c)$ for all $a,b,c\in \Lambda$). It is \emph{symmetric} if this form can be taken to be symmetric.
Frobenius algebras are self-injective, and the converse is true if and only if $\on{dim}\on{Hom}_\Lambda(S, \Lambda) = \on{dim}S$ for all simples $S$.
In particular, self-injective basic algebras are exactly the Frobenius basic algebras.

If $\Lambda$ is Frobenius, then from the nondegenerate bilinear form we get an isomorphism $f:\Lambda\to D\Lambda$ of vector spaces, given by $f(v) = (-,v)$.
Moreover $f$ is an isomorphism of left $\Lambda$-modules since 
\begin{align*}
  f(\lambda v) = (-, \lambda v) = (-\lambda, v) = \lambda(f(v)).
\end{align*}
Nondegeneracy of the form implies that there exists a unique $\kk$-linear map $\varphi:\Lambda\to \Lambda$ satisfying
\begin{align*}
  (a,b) = (b,\varphi(a))
\end{align*}
for all $a, b\in\Lambda$. In fact such a $\varphi$ is an algebra automorphism, and $f$ becomes a right module isomorphism $f:\Lambda_{\varphi}\to D\Lambda$.
If we choose a different bilinear form and hence a different isomorphism $g:\Lambda\to D\Lambda$ of vector spaces, then $g(a) = f(au)$ for some unit $u\in \Lambda$.
Then the corresponding automorphism $\psi$ is given by $\psi(a) = u \varphi(a)u^{-1}$, so $\varphi$ is unique as an outer automorphism of $\Lambda$.
The automorphism $\varphi$ is called a Nakayama automorphism of $\Lambda$.
In particular, $\Lambda$ is symmetric if and only if $\varphi = \on{id}_{\on{Out}(\Lambda)}$.

We are interested in studying skew group algebras of Frobenius algebras, and in particular the case where $G$ is generated by a Nakayama automorphism.
\begin{rmk}
  In \cite[Theorem 1.3(c)(iii)]{RR85} it is proved that skew group algebras of self-injective algebras are always self-injective. In the discussion that follows we show that the property of being Frobenius is also preserved under taking skew group algebras.
\end{rmk}

Let $G$ be a finite group acting on a Frobenius algebra $\Lambda$ by automorphisms. 
The algebra $\kk G$ is always Frobenius and in fact symmetric. We denote by $(-,-)$ the corresponding symmetric nondegenerate bilinear form on $\kk G$ as well.
This form can be taken to be $(h, l) = \delta_{hl^{-1}}$ for $h,l\in G$, extended bilinearly.
Then we can define a bilinear form $\langle-,-\rangle$ on the skew group algebra $\Lambda G$ by setting
\begin{align*}
  \langle \lambda\otimes l,\mu\otimes m\rangle = (\lambda, l(\mu))(l,m)
\end{align*}
for $\lambda, \mu\in \Lambda$ and $l,m\in G$, extended bilinearly.

\begin{lemma}
The form $\langle-,-\rangle$ is multiplicative and nondegenerate. In particular, $\Lambda G$ is Frobenius.
\end{lemma}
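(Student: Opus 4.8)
The plan is to verify directly the two defining properties of a Frobenius form for $\langle-,-\rangle$, namely multiplicativity and nondegeneracy; the claim that $\Lambda G$ is Frobenius then follows immediately from the definition recalled above, once we have a nondegenerate multiplicative form.

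First I would check multiplicativity. By bilinearity it suffices to test the identity $\langle xy, z\rangle = \langle x, yz\rangle$ on elements $x = \lambda\otimes l$, $y = \mu\otimes m$, $z = \nu\otimes n$. Expanding the products via the multiplication rule of $\Lambda G$, the left-hand side becomes $(\lambda\, l(\mu),\,(lm)(\nu))\,(lm, n)$ and the right-hand side becomes $(\lambda,\, l(\mu)\,(lm)(\nu))\,(l, mn)$, where I have used that $l$ is an algebra automorphism and that $l\circ m = lm$ in order to rewrite $l(\mu\, m(\nu)) = l(\mu)\,(lm)(\nu)$. The two $\Lambda$-factors agree by multiplicativity of $(-,-)$ on $\Lambda$, and the two $\kk G$-factors agree because the form on $\kk G$ is itself multiplicative (being the symmetric Frobenius form of the symmetric algebra $\kk G$); concretely both equal the indicator testing whether $lmn$ is the identity of $G$. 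Hence the two sides coincide.

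Next I would establish nondegeneracy. Writing a general element as $x = \sum_{l\in G}\lambda_l\otimes l$ with $\lambda_l\in\Lambda$, I would pair it against the elements $\mu\otimes m$. Since $(l,m)$ is nonzero precisely when $l = m^{-1}$, the sum collapses to $\langle x, \mu\otimes m\rangle = (\lambda_{m^{-1}},\, m^{-1}(\mu))$. Assume this vanishes for all $\mu\in\Lambda$ and all $m\in G$. Fixing $m$ and letting $\mu$ range over $\Lambda$, the element $m^{-1}(\mu)$ ranges over all of $\Lambda$ because $m^{-1}$ is bijective, so nondegeneracy of $(-,-)$ on $\Lambda$ forces $\lambda_{m^{-1}} = 0$. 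As $m$ is arbitrary this yields $x = 0$, proving left-nondegeneracy; the right-hand version is symmetric.

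The computations are all routine, so the only real care needed is bookkeeping: one must invoke at exactly the right places that $G$ acts by algebra automorphisms (to split $l(\mu\, m(\nu))$ in the multiplicativity check, and to use bijectivity in the nondegeneracy step) and that both constituent forms are multiplicative. I expect the mild subtlety in the multiplicativity check, namely tracking how the twist by the group action threads through the $\Lambda$-slot of the form, to be the most error-prone point, but there is no conceptual obstacle here.
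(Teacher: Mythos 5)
Your proposal is correct and follows essentially the same route as the paper's proof: multiplicativity is checked on elementary tensors using multiplicativity of the forms on $\Lambda$ and $\kk G$ together with the fact that $G$ acts by algebra automorphisms, and nondegeneracy is obtained by collapsing the sum via the group-algebra form and using bijectivity of the action, exactly as the paper does. The only cosmetic difference is that the paper writes out both one-sided nondegeneracy checks explicitly, whereas you do one side and note the other is analogous, which is fine.
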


\begin{proof}
We have
\begin{align*}
  \langle (\lambda\otimes l)(\mu\otimes m), \nu\otimes n\rangle &= \langle \lambda l(\mu)\otimes lm, \nu\otimes n\rangle = \\
  &= (\lambda l(\mu) , (lm)(\nu))(lm,n) = \\
  &= (\lambda, l(\mu)l(m(\nu)))(l,mn) = \\
  &= (\lambda, l(\mu m(\nu)))(l,mn) = \\
  &= \langle \lambda\otimes l, \mu m(\nu)\otimes mn\rangle =\\
  &= \langle \lambda\otimes l, (\mu\otimes m)(\nu\otimes n)\rangle
\end{align*}
for all $\lambda,\mu,\nu\in \Lambda$ and $l,m,n\in G$. This proves multiplicativity.

Assume now that there exists $\sum_{i} \xi_i\otimes z_i\in \Lambda G$ such that $\langle \sum_{i} \xi_i\otimes z_i, x\rangle = 0$ for all $x\in \Lambda G$. Without loss of generality we can take every $z_i$ to be an element of $G$.
Take $x=\lambda\otimes l$ with $\lambda\in \Lambda$ and $l\in G$. 
Then 
\begin{align*}
  0 = \sum_i \langle \xi_i\otimes z_i, \lambda\otimes l\rangle &= \sum_i (\xi_i,z_i(\lambda))(z_i,l)   = \sum_{z_i = l^{-1}}(\xi_i,l^{-1}(\lambda)) = \left(\sum_{z_i = l^{-1}}\xi_i,l^{-1}(\lambda)\right).
\end{align*}
Since $l$ acts by an automorphism and $(-,-)$ is nondegenerate, it follows that $\sum_{z_i = l^{-1}}\xi_i = 0$. By iterating this argument for all possible values of $l$, we get that
$$\sum_{i} \xi_i\otimes z_i = \sum_{l\in G} \left(\sum_{z_i=l^{-1}} \xi_i\right)\otimes l^{-1} = 0.$$

Assume instead that $\langle x,\sum_i\xi_i\otimes z_i\rangle = 0$ for all $x\in \Lambda G$. Again we suppose that $z_i\in G$ and we take $x=\lambda\otimes l$ with $\lambda\in \Lambda$ and $l\in G$.
Then
\begin{align*}
  0 = \sum_i \langle \lambda\otimes l,\xi_i\otimes z_i\rangle &= \sum_i(\lambda,l(\xi_i))(l,z_i) =    \sum_{z_i = l^{-1}}(\lambda, l(\xi_i))  = \left( \lambda,l\left( \sum_{z_i = l^{-1}}\xi_i \right) \right)
\end{align*}
so that $\sum_{z_i= l^{-1}}\xi_i = 0$ and we can argue as above. This proves nondegeneracy.
\end{proof}

If the bilinear form on $\Lambda$ is $G$-equivariant, we can find a Nakayama automorphism of $\Lambda G$. Let us choose a Nakayama automorphism $\varphi$ of $\Lambda$.

\begin{prop}
  \label{prop:nakayama}
  If $(g(\lambda), g(\mu)) = (\lambda,\mu)$ for all $g\in G, \lambda, \mu \in \Lambda$, then $\varphi\otimes 1$ is a Nakayama automorphism of $\Lambda G$.
\end{prop}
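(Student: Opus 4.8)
The plan is to identify $\varphi\otimes 1$ as \emph{the} Nakayama automorphism of $\Lambda G$ through the characterisation via the form, rather than by manipulating outer automorphisms directly. Recall from the discussion preceding this proposition that, for the Frobenius algebra $\Lambda G$ equipped with the form $\langle-,-\rangle$ of the previous lemma, a Nakayama automorphism is the unique $\kk$-linear map $\Phi$ satisfying $\langle a,b\rangle = \langle b,\Phi(a)\rangle$ for all $a,b\in\Lambda G$, and that any $\kk$-linear map with this property is automatically an algebra automorphism. Hence it suffices to verify the single identity $\langle a,b\rangle = \langle b,(\varphi\otimes 1)(a)\rangle$; by bilinearity I would check it on elements $a=\lambda\otimes l$ and $b=\mu\otimes m$ with $\lambda,\mu\in\Lambda$ and $l,m\in G$.

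Next I would expand both sides using the explicit formula $\langle\lambda\otimes l,\mu\otimes m\rangle=(\lambda,l(\mu))(l,m)$. The left-hand side is $(\lambda,l(\mu))(l,m)$, while the right-hand side is
\begin{align*}
  \langle\mu\otimes m,\varphi(\lambda)\otimes l\rangle=(\mu,m(\varphi(\lambda)))(m,l).
\end{align*}
Since the form on $\kk G$ is symmetric and satisfies $(l,m)\neq 0$ if and only if $m=l^{-1}$ (as already used in the nondegeneracy argument), both sides vanish unless $m=l^{-1}$, so the identity holds trivially off the diagonal $m=l^{-1}$, and it remains to treat that case.

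Assuming $m=l^{-1}$, so that the two $\kk G$-factors agree and equal $1$, the right-hand side becomes $(\mu,l^{-1}(\varphi(\lambda)))$. The key step is then to invoke the $G$-equivariance hypothesis $(g(x),g(y))=(x,y)$ with $g=l$ to rewrite this as $(l(\mu),\varphi(\lambda))$, and finally to apply the defining property $(a,b)=(b,\varphi(a))$ of the Nakayama automorphism $\varphi$ of $\Lambda$ (with $a=\lambda$, $b=l(\mu)$) to obtain $(l(\mu),\varphi(\lambda))=(\lambda,l(\mu))$. This matches the left-hand side $(\lambda,l(\mu))$, completing the verification and identifying $\varphi\otimes 1$ with the Nakayama automorphism determined by $\langle-,-\rangle$.

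I expect the only genuine subtlety to be the bookkeeping: keeping track of whether the group element acts as $l$ or as $l^{-1}$, and respecting the normalisation of the $\kk G$-form, so that the equivariance step is applied in the correct direction. The single structural ingredient is the $G$-equivariance of $(-,-)$, which is exactly the stated hypothesis; as a byproduct, the same computation forces $\varphi$ to commute with the $G$-action (so that $\varphi\otimes 1$ is multiplicative on $\Lambda G$), but this does not need to be checked separately once $\varphi\otimes 1$ has been recognised as the Nakayama automorphism through the form.
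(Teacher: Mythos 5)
Your proof is correct and follows essentially the same route as the paper's: both verify the identity $\langle a,b\rangle = \langle b,(\varphi\otimes 1)(a)\rangle$ on elements $\lambda\otimes l$, $\mu\otimes m$ using the Kronecker-delta form on $\kk G$, the $G$-equivariance hypothesis, and the defining property $(a,b)=(b,\varphi(a))$ of $\varphi$ on $\Lambda$. The only difference is cosmetic (you reduce the right-hand side to the left, while the paper chains equalities from left to right carrying the delta factor), and your appeal to the fact that the map determined by the form is automatically an algebra automorphism is exactly the paper's implicit justification as well.
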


\begin{proof}
  Let $\lambda,\mu\in \Lambda$ and $l,m\in G$. Then 
  \begin{align*}
    \langle \lambda\otimes l,\mu\otimes m\rangle &= \delta_{lm^{-1}}(\lambda, l(\mu)) = \\
    &= \delta_{lm^{-1}}(l(\mu), \varphi(\lambda)) = \\
    &= \delta_{lm^{-1}} (\mu, l^{-1}\varphi(\lambda)) = \\
    &= \delta_{lm^{-1}}(\mu,m\varphi(\lambda)) = \\
    &= \langle \mu\otimes m, \varphi(\lambda)\otimes l\rangle. \qedhere
  \end{align*}
\end{proof}

\begin{cor}
  \label{cor:sym}
  If $\varphi$ generates the image $\on{im}(G)\subseteq \on{Aut}(\Lambda)$, then $\Lambda G$ is symmetric.
\end{cor}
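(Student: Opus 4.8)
The plan is to invoke the characterisation, recalled just before the corollary, that a Frobenius algebra is symmetric exactly when its Nakayama automorphism is trivial in the outer automorphism group. Retaining the $G$-equivariance hypothesis of Proposition~\ref{prop:nakayama}, we already know that $\varphi\otimes 1$ is a Nakayama automorphism of $\Lambda G$. Thus everything reduces to showing that $\varphi\otimes 1$ is an \emph{inner} automorphism of $\Lambda G$, i.e.\ that it equals conjugation by some unit.

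The key input is the standard fact that the skew group algebra construction turns the $G$-action into an inner action. For any $g\in G$, the element $1\otimes g$ is a unit of $\Lambda G$ with inverse $1\otimes g^{-1}$, and a direct computation gives $(1\otimes g)(\lambda\otimes 1)(1\otimes g^{-1}) = g(\lambda)\otimes 1$ for all $\lambda\in \Lambda$. Since $\varphi$ generates $\on{im}(G)\subseteq \on{Aut}(\Lambda)$, I would choose $g\in G$ whose action on $\Lambda$ is precisely $\varphi$, and set $u = 1\otimes g$.

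Next I would check that conjugation by $u$ agrees with $\varphi\otimes 1$ on a generating set of $\Lambda G$, namely on the subalgebras $\Lambda\otimes 1$ and $1\otimes G$, which together generate $\Lambda G$ because $\lambda\otimes h = (\lambda\otimes 1)(1\otimes h)$. On $\Lambda\otimes 1$ both maps send $\lambda\otimes 1$ to $\varphi(\lambda)\otimes 1 = g(\lambda)\otimes 1$, by the computation above. On $1\otimes h$ the automorphism $\varphi\otimes 1$ acts as the identity, while conjugation by $u$ sends $1\otimes h$ to $1\otimes ghg^{-1}$; these coincide because $G$ is abelian, so $g$ is central. Agreeing on generators, the two algebra automorphisms coincide, so $\varphi\otimes 1$ equals conjugation by $u$ and is therefore inner. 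Hence its class in $\on{Out}(\Lambda G)$ is trivial and $\Lambda G$ is symmetric.

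The main (indeed essentially the only) delicate point is the compatibility on the group-algebra part $1\otimes G$: one must ensure that conjugation by $g$ fixes $1\otimes h$ for every $h\in G$, which is exactly where the commutativity of $G$ is used. For a nonabelian group the same argument would go through only if the chosen $g$ were central, so the cyclicity of $G$ in our setting is what makes the conjugating unit $1\otimes g$ act trivially on $\kk G$.
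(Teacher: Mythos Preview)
Your argument for showing that $\varphi\otimes 1$ is an inner automorphism is correct and essentially identical to the paper's: both conjugate by $1\otimes g$ where $g\in G$ acts as $\varphi$, and use that $G$ is abelian to ensure the conjugation is trivial on $1\otimes\kk G$.

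However, there is a genuine gap at the very start. You write ``Retaining the $G$-equivariance hypothesis of Proposition~\ref{prop:nakayama}'', but that hypothesis is \emph{not} part of the statement of the corollary. The only assumption is that $\varphi$ generates $\on{im}(G)\subseteq\on{Aut}(\Lambda)$; the $G$-equivariance of the bilinear form $(-,-)$ must be \emph{deduced} from this. The paper does exactly that: if $g\in G$ acts as $\varphi^j$ for some $j$ (which is guaranteed since $\varphi$ generates $\on{im}(G)$), then applying the defining property $(a,b)=(b,\varphi(a))$ of the Nakayama automorphism twice gives $(\lambda,\mu)=(\varphi(\lambda),\varphi(\mu))$, and iterating yields $(\lambda,\mu)=(\varphi^j(\lambda),\varphi^j(\mu))=(g(\lambda),g(\mu))$. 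Only then is Proposition~\ref{prop:nakayama} applicable, giving that $\varphi\otimes 1$ is a Nakayama automorphism of $\Lambda G$.

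Once you insert this short verification, your proof is complete and matches the paper's route.
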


\begin{proof} 
Since $\varphi$ is an element in $\on{im}(G)$, we know that there is an $h\in G$ which acts on $\Lambda$ as $\varphi$.
Now let $g\in G$. By assumption, there exists an integer $j$ such that $g$ acts on $\Lambda$ as $\varphi^j$.
Then we have 
$$(\lambda, \mu) = (\mu, \varphi(\lambda)) = (\varphi(\lambda), \varphi(\mu)) = (\varphi^j(\lambda), \varphi^j(\mu)) = (g(\lambda), g(\mu)),$$
so we can apply Proposition~\ref{prop:nakayama} and get that
$$\varphi\otimes 1 : \lambda\otimes l\mapsto h(\lambda)\otimes l$$ is a Nakayama automorphism of $\Lambda G$. Notice now that $h(\lambda)\otimes l = (1\otimes h)(\lambda\otimes l)(1\otimes h)^{-1}$, so that $\varphi\otimes 1$ is the identity as an outer automorphism of $\Lambda G$, which means that $\Lambda G$ is symmetric.
\end{proof}

We include the following lemma, which we will use in Section~\ref{sec:nakayama}.

\begin{lemma}
  \label{lem:basicsym}
  Let $\Lambda$ be a symmetric algebra, and $e\in  \Lambda$ an idempotent. Then $e\Lambda e$ is symmetric.
\end{lemma}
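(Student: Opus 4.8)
The plan is to equip $e\Lambda e$ with the restriction of the symmetric form on $\Lambda$ and to verify the three required properties. Since $\Lambda$ is symmetric, fix a nondegenerate, symmetric, multiplicative bilinear form $(-,-)$ on $\Lambda$, and define $(-,-)'$ on $e\Lambda e$ by $(x,y)' = (x,y)$ for $x, y \in e\Lambda e$. First I would record that $e\Lambda e$ is a subalgebra of $\Lambda$ with unit $e$, and that the product of two elements of $e\Lambda e$ computed inside $e\Lambda e$ agrees with their product in $\Lambda$ (using $e^2 = e$). Consequently symmetry of $(-,-)'$ is immediate from symmetry of $(-,-)$, and multiplicativity $(xy, z)' = (x, yz)'$ for $x,y,z \in e\Lambda e$ is inherited verbatim from multiplicativity of $(-,-)$ on $\Lambda$.

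The only substantive point, and the step I expect to be the main obstacle, is nondegeneracy of $(-,-)'$: the restricted form could a priori be degenerate even though the ambient form is not, so I must relate the two. The idea is to reduce to nondegeneracy of $(-,-)$ on $\Lambda$ via the identity
\begin{align*}
  (x, \mu) = (x, e\mu e) \quad \text{for all } x \in e\Lambda e, \ \mu \in \Lambda.
\end{align*}
To prove this I would first observe that $x = exe$ forces $ex = xe = x$. Writing $\tau(a) = (a, 1)$ for the associated trace, so that $(a,b) = (ab, 1) = \tau(ab)$ and $\tau(ab) = \tau(ba)$ by symmetry, the identity becomes a short cyclic manipulation:
\begin{align*}
  (x, e\mu e) = \tau(xe\mu e) = \tau(x\mu e) = \tau(ex\mu) = \tau(x\mu) = (x,\mu),
\end{align*}
using $xe = x$, cyclicity of $\tau$, and $ex = x$. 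One may equally avoid the trace and argue directly from multiplicativity and idempotency of $e$.

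Granting this identity, suppose $x \in e\Lambda e$ satisfies $(x, y)' = 0$ for every $y \in e\Lambda e$. For arbitrary $\mu \in \Lambda$ the element $e\mu e$ lies in $e\Lambda e$, so $(x, \mu) = (x, e\mu e) = 0$; nondegeneracy of $(-,-)$ on $\Lambda$ then forces $x = 0$. Degeneracy on the right is handled by the symmetric argument, or simply by symmetry of the form. Hence $(-,-)'$ is a nondegenerate, symmetric, multiplicative form on $e\Lambda e$, which proves that $e\Lambda e$ is symmetric.
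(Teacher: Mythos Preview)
Your proof is correct and follows essentially the same approach as the paper: restrict the symmetric form, note that symmetry and multiplicativity are immediate, and for nondegeneracy show that $(x,\mu)=(x,e\mu e)$ whenever $x\in e\Lambda e$, reducing to nondegeneracy on $\Lambda$. The paper carries out this last step directly via multiplicativity and symmetry (writing $\langle u,v\rangle=\langle eue,v\rangle=\langle eu,ev\rangle=\langle ev,eu\rangle=\langle eve,u\rangle$) rather than through the associated trace, but the content is the same.
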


\begin{proof}
  Let $\langle -,-\rangle$ be a symmetric multiplicative nondegenerate bilinear form on $\Lambda$. Then the restricted form on $e\Lambda e$ is a symmetric multiplicative bilinear form on $e\Lambda e$. Let now $u\in e\Lambda e$ such that $\langle u,-\rangle _{|e\Lambda e} = 0$.
  Let $v\in \Lambda$ and observe that 
\begin{align*}
  \langle u,v\rangle &= \langle eue, v\rangle =  \langle eu, ev\rangle = \langle ev, eu\rangle  =\langle eve, u\rangle = 0
\end{align*}
so that $u = 0$ since the form is nondegenerate on $\Lambda$.
\end{proof}

\section{Setup and result}\label{sec:setup}
In this section we set up our assumptions, and fix the notation we need to be able to state our results.

Let $(Q,W)$ be a quiver with potential and let $\Lambda$ be its Jacobian algebra. 
\begin{nota}\label{not:potential}
	We write $W = \sum_c a(c)c$.
\end{nota}
Recall that we consider cycles up to cyclic permutation. We assume 
that $W$ is finite and that the cyclic derivatives of $W$ generate an admissible ideal of $\kk Q$.
In what follows we will freely use integers as indices for convenience, even when they should be seen as elements of $\Z/n\Z$.
\subsection{Assumptions}\label{subsec:assumptions}
Let $G$ be a cyclic group of order $n$ with generator $g$, acting on $\kk Q$.
We make the following assumptions \ref{ass:field}--\ref{ass:cycles}.
\begin{enumerate}[label=(A\arabic*), series=assumptions]
  \item The field $\kk$ contains a primitive $n$-th root of unity $\zeta$. In particular, $n\neq 0$ in $\kk$. \label{ass:field}
  \item The action of $G$ permutes the vertices of $Q$ and maps every arrow to a multiple of an arrow. \label{ass:permuted}
  \item Every vertex of $Q$ which is not fixed by $G$ has an orbit of cardinality $n$. \label{ass:cardinality}
  \item We have $GW= W$ in $\widehat{kQ}/\on{com}_Q$. \label{ass:potential}
\end{enumerate}
Since $G$ preserves the potential, we get an induced action of $G$ on $\Lambda$.
\begin{nota}\label{not:star}
	We define a second ``forgetful'' action $*$ of $G$ on $Q$ by $g*v =g(v)$ for $v\in Q_0$ and $g*\alpha = \beta$ whenever $\beta$ is an arrow and $g(\alpha)$ is a scalar multiple of $\beta$.
\end{nota}

\begin{rmk}\label{rmk:fixed}
  Let $u,v\in Q_0$ be (not necessarily distinct) vertices fixed by $G$. The vector space $V$ spanned by arrows from $u$ to $v$ is a $kG$-module, and since $G$ is abelian it decomposes into 1-dimensional submodules. This means that, up to choosing a different basis for $V$, we can assume that arrows between fixed vertices are mapped to scalar multiples of themselves.
\end{rmk}
By this observation, we can without loss of generality make the additional assumption:
\begin{enumerate}[label=(A\arabic*),resume=assumptions]
	\item If $\alpha$ is an arrow between two fixed vertices, then $g(\alpha)=\zeta^{b(\alpha)}\alpha$ for an integer $b(\alpha)$. \label{ass:fixed}
\end{enumerate}
\begin{nota}
	\label{not:b}
	We define $b(\alpha)$ as above, for $\alpha$ any arrow between fixed vertices.
\end{nota}
\begin{rmk}
  Suppose that an arrow $\alpha$ is such that $g(\alpha) = \zeta^i\beta$ for some arrow $\beta\neq \alpha$.
  Then, by assumptions \ref{ass:fixed} and \ref{ass:cardinality}, one of $\mathfrak s(\alpha)$ and $\mathfrak t(\alpha)$ has an orbit of size $n$, so $|G*\alpha| = n$.
  We can replace $\beta$ with $\zeta^{-i}\beta$ as the element in $\on{rad}\Lambda/\on{rad}^2\Lambda$ representing the corresponding arrow. By doing this for all $n$ distinct arrows in the
  orbit of $\alpha$, we get that on this orbit the action of $G$ coincides with the $*$ action of $G$. 
  The potential $W$ is not affected by this procedure, if we see it as an element of $\widehat{\kk Q}/\on{com}_Q$, so it is still invariant under $G$. However, 
  note that the expression of $W$ as a linear combination of cycles in $Q$ is possibly changed.
\end{rmk}

In view of the above observation, we can without loss of generality make the additional assumption:
\begin{enumerate}[label=(A\arabic*),resume=assumptions]
  \item Arrows with at least one end which is not fixed are sent to arrows by the action of $G$. \label{ass:arrows}
\end{enumerate}
So for an arrow $\alpha$ between two fixed vertices we have $g*\alpha = \alpha = \zeta^{-b(\alpha)}g(\alpha)$, while for all other arrows we have $g*\alpha = g(\alpha) = \beta$ for some arrow $\beta\neq \alpha$.

We need to make some further assumptions about the relationship between $G$ and $W$. It turns out that it is convenient to impose conditions on the number of fixed vertices appearing in cycles of $W$.
We make the following assumption.
\begin{enumerate}[label=(A\arabic*),resume=assumptions]
  \item Every cycle $c$ appearing in $W$ is of one of the following types:\label{ass:cycles}
\end{enumerate}
\begin{enumerate}[label=(\roman*)]
  \item the cycle $c$ goes through no vertices fixed by $G$;
  \item the cycle $c$ goes through exactly one (counted with multiplicity) vertex fixed by $G$;
  \item the cycle $c$ goes through exactly one (counted with multiplicity) vertex not fixed by $G$;
  \item the cycle $c$ goes only through vertices which are fixed by $G$.
\end{enumerate}

\begin{nota}\label{not:cycletypes}
	We call cycles appearing in $W$ cycles of type (i)--(iv) according to the (mutually exclusive) cases of assumption~\ref{ass:cycles}.
\end{nota}

\begin{rmk}
  These assumptions are strong. We need them to construct a QP $(Q_G, W_G)$ such that the skew group algebra of $\mathcal P(Q,W)$ is Morita equivalent to $\mathcal P(Q_G, W_G)$. However, the assumptions are satisfied in many examples,
  and they are weak enough to still hold for $(Q_G, W_G)$. This in turn allows us to come back to $(Q, W)$ via a skew group algebra construction with a natural action of the dual group $\hat G$ (see Section~\ref{sec:dual}).
\end{rmk}

\begin{rmk}
  From our assumptions, it follows that cycles of a given type are mapped by $G$ to multiples of cycles of the same type. By assumption \ref{ass:arrows}, cycles of type (i) and (ii) contain only arrows that are mapped to 
arrows, so those cycles are mapped to cycles. If $c = \alpha_1\dots\alpha_l$ is of type (iv), then $g(c) = \zeta^{\sum_{i}b(\alpha_i)} c$, so from $GW = W$ we obtain that $\sum_{i}b(\alpha_i) = 0 \pmod n$ and
$Gc = c$.
In particular, $g*c = g(c)$ for all $c$ of type (i), (ii), (iv).
\end{rmk}

The reader wishing to have examples of QPs with group actions satisfying these assumptions is advised to have in mind the QPs of Example~\ref{ex:triangles}. 
In particular, the two QPs of Figure \ref{fig:triangle} and Figure \ref{fig:sga_triangle} both have an action of $\Z/3\Z$, one sending arrows to arrows and the other multiplying $\tilde \delta$ by a third root of 
unity. 
They are the quivers with potential corresponding to each other's skew group algebra under these actions.
All cycles of the first one are of type (i) or (ii), while all cycles of the second one are of type (iii) or (iv).

\subsection{The quiver of $\Lambda G$}\label{sec:quiver}
We now describe the quiver $Q_G$ of the skew group algebra $\Lambda G$ following \cite{RR85}.
We first define an idempotent $\eta\in \Lambda G$ such that $\eta( \Lambda G) \eta$ is basic and Morita equivalent to $\Lambda G$. We decompose $\eta$ as a sum of primitive orthogonal idempotents, and
use those to label the vertices of $Q_G$. Then we choose elements in $\eta(\Lambda G)\eta$ to be the arrows. 

\begin{nota}\label{not:emu}
	A complete list of primitive orthogonal idempotents for the group algebra $\kk G$ is given by
$$e_\mu = \frac{1}{n} \sum_{i=0}^{n-1} \zeta^{i\mu}g^i,$$
for $\mu=0,\dots,n-1$.
\end{nota}

\begin{nota}\label{not:E}
Fix a set $\cal E$ of representatives of vertices of $Q$ under the action of $G$. We write $\cal E = \cal E' \sqcup \cal E''$,
where $\cal E'$ and $\cal E''$ consist of the vertices in $\mathcal E$ whose orbits have cardinality $n$ and $1$ respectively. 
\end{nota}

\begin{nota}\label{not:eta}
We define the following idempotents in $\Lambda G$:
\begin{itemize}
	\item for each vertex $\varepsilon\in\cal E'$ we put $\eta^\varepsilon = \varepsilon \otimes 1$;
	\item for each vertex $\varepsilon\in\cal E''$ and $\mu=0,\dots,n-1$ we put $\eta^\varepsilon_\mu = \varepsilon \otimes e_\mu$.
\end{itemize}
Set
$$\eta = \sum_{\varepsilon\in\cal E'} \eta^\varepsilon + \sum_{\varepsilon\in\cal E''}\sum_{\mu=0}^{n-1} \eta^\varepsilon_\mu.$$
Note in particular that $\eta = \e\otimes 1$, where $\e$ is the idempotent of $\Lambda$ corresponding to $\mathcal E$.
By \cite[\S2.3]{RR85} the algebra $\eta(\Lambda G)\eta$ is Morita equivalent to $\Lambda G$. 
A complete list of primitive orthogonal idempotents for $\eta(\Lambda G)\eta$ is 
$$\left\{ \eta^{\varepsilon} \ |\ \varepsilon\in \mathcal E' \right\}\cup \left\{ \eta^\varepsilon_\mu \ |\ \varepsilon\in \mathcal E'', \mu= 0, \dots, n-1\right\}.$$
\end{nota}

\begin{rmk}
  The idempotent $\eta$ is not canonical, in that it depends on choosing some vertices of $Q$. However, it is convenient to define it in this way to get a natural action of the dual group $\hat G$ on $\eta( \Lambda G) \eta$.
  By contrast, the authors of \cite{AP17} choose a canonically defined basic algebra for their Morita equivalence, but in exchange they have to choose vertices of $Q$ in order to be able to define such an action.
\end{rmk}

\begin{nota}\label{not:arrowtypes}
Now we will fix a basis for the arrows of the quiver $Q_G$ of $\eta(\Lambda G)\eta$. 
There are four different cases to consider.
\begin{enumerate}[label=(\arabic*)]
  \item Let $\beta$ be an arrow between two non-fixed vertices of $Q$. Then there is exactly one arrow $\alpha$ in the $G$-orbit of $\beta$ such that $\mathfrak t(\alpha)\in \mathcal E'$.
    Thus $\alpha$ is of the form $\alpha\colon g^t\varepsilon \to \varepsilon'$, with $\varepsilon, \varepsilon'\in \mathcal E'$ and $0\leq t \leq n-1$. We call $\alpha$ an arrow of type (1), and define an element $\tilde \alpha\in \eta( \Lambda  G)\eta$
    by $$\tilde\alpha=\alpha \otimes g^t.$$ This will be an arrow in $Q_G$ from $\eta^\varepsilon$ to $\eta^{\varepsilon'}$.
  \item  Let $\beta$ be an arrow in $Q$ from a non-fixed vertex to a fixed vertex. Then there is exactly one arrow $\alpha$ in the $G$-orbit of $\beta$ such that $\mathfrak s(\alpha) \in \mathcal E'$.
	  Thus $\alpha$ is of the form $\alpha\colon \varepsilon \to \varepsilon'$, with $\varepsilon\in\cal E'$, $\varepsilon'\in\cal E''$.
	  We call $\alpha$ an arrow of type (2), and define elements $\tilde \alpha^\mu\in \eta( \Lambda G)\eta$ by $$\tilde\alpha^\mu=(1 \otimes e_\mu)(\alpha \otimes 1)$$ for $\mu = 0,\dots, n-1$.
	These will be arrows in $Q_G$ from $\eta^\varepsilon$ to $\eta^{\varepsilon'}_\mu$ respectively.
  \item  Let $\beta$ be an arrow in $Q$ from a fixed vertex to a non-fixed vertex. Then there is exactly one arrow $\alpha$ in the $G$-orbit of $\beta$ such that $\mathfrak t(\alpha) \in \mathcal E'$.
	  Thus $\alpha$ is of the form $\alpha\colon \varepsilon \to \varepsilon'$, with $\varepsilon\in\cal E''$, $\varepsilon'\in\cal E'$.
	  We call $\alpha$ an arrow of type (3), and define elements $\tilde \alpha^\mu\in \eta( \Lambda G)\eta$ by $$\tilde\alpha^\mu=\alpha \otimes e_\mu$$ for $\mu = 0,\dots, n-1$.
	These will be arrows in $Q_G$ from $\eta^\varepsilon_\mu$ to $\eta^{\varepsilon'}$ respectively.
      \item Let $\alpha$ be an arrow between two fixed vertices, i.e.,~$\alpha\colon \varepsilon \to \varepsilon'$ with $\varepsilon,\varepsilon'\in\cal E''$. Recall that by assumption $g(\alpha) = \zeta^{b(\alpha)}\alpha$.
	We call $\alpha$ an arrow of type (4), and define elements $\tilde \alpha^\mu\in \eta( \Lambda G)\eta$ by $$\tilde\alpha^\mu=\alpha \otimes e_\mu$$ for $\mu = 0, \dots, n-1$.
	These will be arrows in $Q_G$ from $\eta^\varepsilon_\mu$ to $\eta^{\varepsilon'}_{\mu-b(\alpha)}$ respectively.
\end{enumerate}
\end{nota} 
\begin{rmk}
	By our construction, not every arrow of $Q$ is of type $(1), (2), (3)$ or $(4)$. However, for each arrow $\beta$ of $Q$ there exists a unique arrow $\alpha$ of one of those types which is in the $G$-orbit of $\beta$.
\end{rmk}

\begin{nota}\label{not:t}
For an arrow $\alpha: g^t(\varepsilon)\to \varepsilon'$ of type (1), we define $t(\alpha) = t$. Note that this integer is well defined modulo $n$, since the orbit of $\varepsilon$ has cardinality $n$.
If instead $\alpha$ is an arrow of type (2), (3), or (4), we put $t(\alpha) = 0$.
\end{nota}

\begin{prop}
  This choice gives a basis of $\on{rad}\eta(\Lambda G)\eta/\on{rad}^2\eta(\Lambda G)\eta$, and the start and target of arrows in $Q_G$ are as claimed above.
\end{prop}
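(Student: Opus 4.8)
The plan is to reduce the statement to a computation in the radical-square quotient and then to analyse that quotient one $G$-orbit of arrows at a time. First I would establish that the radical filtration of $\Lambda G$ is inherited from that of $\Lambda$: concretely $\on{rad}(\Lambda G) = (\on{rad}\Lambda)\otimes\kk G$ and $\on{rad}^2(\Lambda G) = (\on{rad}^2\Lambda)\otimes\kk G$. The first identity holds because $\Lambda G/\big((\on{rad}\Lambda)\otimes\kk G\big)\cong(\Lambda/\on{rad}\Lambda)G$ is the skew group algebra of a semisimple algebra by a group whose order is invertible in $\kk$ (assumption~\ref{ass:field}), hence is semisimple, while $(\on{rad}\Lambda)\otimes\kk G$ is nilpotent; the second then follows from the multiplication rule $(\lambda\otimes g)(\mu\otimes h)=\lambda g(\mu)\otimes gh$ together with $g(\on{rad}\Lambda)=\on{rad}\Lambda$. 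Since $\eta$ is a full idempotent (the Morita equivalence recalled in Notation~\ref{not:eta}), one has $\on{rad}^k\big(\eta(\Lambda G)\eta\big)=\eta\,\on{rad}^k(\Lambda G)\,\eta$ for all $k$, and therefore a canonical isomorphism
\[
  \on{rad}\big(\eta(\Lambda G)\eta\big)/\on{rad}^2\big(\eta(\Lambda G)\eta\big)\;\cong\;\eta\big(V\otimes\kk G\big)\eta,
\]
where $V=\on{rad}\Lambda/\on{rad}^2\Lambda$ has as a basis the classes of the arrows of $Q$. Thus proving the proposition amounts to exhibiting a basis of the right-hand side and reading off its endpoints.

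Next I would decompose $V\otimes\kk G$ as a direct sum of sub-bimodules $V_O\otimes\kk G$ indexed by the $G$-orbits $O$ of arrows of $Q$ (this is legitimate since $(1\otimes g)(\alpha\otimes h)=g(\alpha)\otimes gh$ preserves each orbit), and compute $\eta(V_O\otimes\kk G)\eta$ separately in the four cases of Notation~\ref{not:arrowtypes}. Each computation uses only the orthogonality relations $e_\mu e_\nu=\delta_{\mu\nu}e_\mu$, the identity $g^ie_\mu=\zeta^{-i\mu}e_\mu$, and the fact that premultiplying or postmultiplying an arrow by a vertex idempotent is either the identity or zero. For a type~(1) orbit one finds that left multiplication by $\eta$ annihilates every arrow of the orbit except the representative $\alpha$ with $\mathfrak t(\alpha)\in\mathcal E'$, while right multiplication then forces the group component to be $g^{t(\alpha)}$, so that the whole $n^2$-dimensional space $V_O\otimes\kk G$ collapses to the single line $\kk\,\tilde\alpha$. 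For types~(2) and (3) the space surviving multiplication by $\eta$ is $n$-dimensional, spanned respectively by $\{g^i(\alpha)\otimes g^i\}_i$ and $\{\alpha\otimes g^i\}_i$, and the defined elements $\tilde\alpha^\mu$ are related to these monomials by the invertible discrete-Fourier matrix $\big(\tfrac1n\zeta^{i\mu}\big)_{i,\mu}$, hence also form a basis; for type~(4) the orbit is the single arrow $\alpha$ and $\{\alpha\otimes e_\mu\}_\mu$ is visibly a basis of $V_O\otimes\kk G=\kk\alpha\otimes\kk G$. Finally I would verify the claimed source and target of each arrow by directly evaluating the products $\eta_y\,\tilde\alpha\,\eta_x$ and $\eta_y\,\tilde\alpha^\mu\,\eta_x$; for instance in type~(4) the relation $(1\otimes e_\kappa)(\alpha\otimes 1)=\alpha\otimes e_{\kappa+b(\alpha)}$ shows that $\tilde\alpha^\mu$ runs from $\eta^\varepsilon_\mu$ to $\eta^{\varepsilon'}_{\mu-b(\alpha)}$, matching Notation~\ref{not:arrowtypes}.

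The main obstacle is the spanning (dimension) bookkeeping in the orbit-by-orbit analysis, and in particular the type~(1) case: there $n$ genuinely distinct arrows of $Q$ together with the $n$-dimensional group algebra $\kk G$ must be shown to contribute \emph{exactly one} arrow to $Q_G$, so one has to check carefully that nothing beyond $\kk\,\tilde\alpha$ survives multiplication by $\eta$ on both sides. The reduction carried out in the first step is what makes this bookkeeping meaningful, since it guarantees that we are genuinely computing $\on{rad}/\on{rad}^2$ of the basic algebra rather than merely producing linearly independent elements of its radical; linear independence across distinct orbits is then immediate from the direct-sum decomposition, and within a fixed orbit it comes from the invertibility of the Fourier matrix.
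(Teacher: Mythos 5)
Your proof is correct and follows essentially the same route as the paper's: both reduce to computing $\eta\bigl(\on{rad}\Lambda G/\on{rad}^2\Lambda G\bigr)\eta$ one $G$-orbit of arrows at a time, identify the surviving monomials via the vertex-idempotent bookkeeping, and pass to the elements $\tilde\alpha$, $\tilde\alpha^\mu$ by the invertible discrete-Fourier change of basis. The only difference is cosmetic: where you prove $\on{rad}^i(\Lambda G)=(\on{rad}^i\Lambda)\otimes\kk G$ directly (semisimplicity of $(\Lambda/\on{rad}\Lambda)G$ plus nilpotency) and invoke fullness of $\eta$, the paper simply cites \cite[Theorem 1.3(d)(i)]{RR85} for the same fact.
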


\begin{proof}
  The vector space spanned by the arrows of $Q$ decomposes as a direct sum of $\kk G$-modules into the spans of the $G$-orbits of the arrows. Therefore it is enough to look at one $G$-orbit of an arrow at a time,
  and we can assume that there are no multiple arrows in $Q$. 

  Let us now look at the four cases. If $\alpha: g^t\varepsilon\to \varepsilon'$ is of type (1), then the $n$ arrows in $G\alpha$ give rise to a unique arrow $\tilde \alpha:\eta^\varepsilon\to \eta^{\varepsilon'}$.
  By \cite[Theorem 1.3(d)(i)]{RR85} we have that $\on{rad}^i\Lambda G = (\on{rad}^i\Lambda)\Lambda G$, so that a basis of the space of arrows from $\eta^\varepsilon$ to $\eta^{\varepsilon'}$ is given by 
  $\left\{ \varepsilon'\beta h(\varepsilon)\otimes h \right\}$ with $\beta\in Q_1$. So the only $\beta$ contributing is the only arrow in $G\alpha$ ending in $\varepsilon'$, and 
 this basis is $\{\tilde \alpha = \alpha\otimes g^{t(\alpha)}\}$.

  Let now $\alpha:\varepsilon\to \varepsilon'$ be of type (2). Then the $n$ arrows in $G\alpha$ give rise to $n$ arrows of $Q_G$. By the above argument, we get that a basis of 
  $(\varepsilon'\otimes 1)(\on{rad}\Lambda G/\on{rad}^2\Lambda G)(\varepsilon\otimes 1)$ is given by $\left\{ g^i(\alpha)\otimes g^i\ |\ i = 0, \dots, n-1 \right\}$.
  Then the set $\left\{ \tilde \alpha^\mu = (1\otimes e_{\mu})(\alpha\otimes 1)\ |\ \mu = 0, \dots, n-1 \right\}$ is also a basis, since
  $$(1\otimes e_\mu)(\alpha\otimes 1)  =\frac{1}{n}\sum_{i= 0}^{n-1}\zeta^{i\mu}g^i(\alpha)\otimes g^i.$$
  Now $\eta^{\varepsilon'}_\nu \tilde \alpha^\mu= \tilde \alpha^\mu$ if $\nu = \mu$, and 0 otherwise, so each $\tilde \alpha^\mu$ is indeed an arrow of $Q_G$ from $\eta^\varepsilon$ to $\eta^{\varepsilon'}_\mu$.
  
  If $\alpha:\varepsilon \to \varepsilon'$ is an arrow of type (3) or (4), by similar arguments we get that $\left\{ \alpha\otimes g^i \right\}$ is a basis of
$(\varepsilon'\otimes 1)(\on{rad}\Lambda G/\on{rad}^2\Lambda G)(\varepsilon\otimes 1)$.
  Then $\left\{ \tilde \alpha^{\mu} = \alpha\otimes e_\mu \right\}$ is also a basis, and it consists of arrows.
\end{proof}

The choice of vertices and arrows we have made defines an isomorphism $J: \kk Q_G\to \eta( (\kk Q)G) \eta$ by \cite[\S2.3]{RR85}.

\subsection{Cycles in $Q_G$ and the potential $W_{G}$} \label{subsec:cycles}
We want to define a potential $W_G$ on $Q_G$, so we need to construct cycles in $Q_G$ depending on those appearing in $W$. Recall that we write $W = \sum_c a(c)c$, and
that we consider cycles up to cyclic permutation.
\begin{nota}\label{not:cmu}
We will define, for every cycle $c$ appearing in $W$, a cycle $\hat c$ in $G*c$ depending on our choice of representatives of the vertices.
Moreover, to every $\hat c$ we will associate a cycle $\tilde c$ in $Q_G$.
\begin{enumerate}[label=(\roman*)]
  \item Let $c$ be a cycle of type (i) in $W$. Then choose $\hat c$ in $G*c$ such that
    $$
    \xymatrix{
      \hat c  &:& \varepsilon_0 = \varepsilon_l \ar[rr]^-{g^{t_1+\dots+t_{l-1}}(\alpha_l)} & &
      g^{t_1+\dots+t_{l-1}}(\varepsilon_{l-1})\ar[r] & \cdots \ar[r]^-{g^{t_1}(\alpha_2)} & g^{t_1}(\varepsilon_1)
      \ar[r]^-{\alpha_1} &\varepsilon_0
    }
    $$
    with $\varepsilon_i\in \mathcal E'$ for all $i$. Notice that this is indeed (in general) a choice, the only requirement is that $\hat c$ should go through at least one vertex in $\mathcal E'$.
  Set moreover $\hat d = \hat c$ for all the other $d\in G*c$.
    Note that each $\alpha_i$ is an arrow of type (1) and $t_i=t(\alpha_i)$.
    Define a cycle $\tilde c$ in $Q_G$ by $$\tilde c = \tilde{\alpha}_1\cdots \tilde{\alpha}_l.$$
  \item Let $c$ be a cycle of type (ii) in $W$. There is a unique $\hat c\in G*c$ that can be written as above, with $\varepsilon_i\in \mathcal E'$ for $i\neq1$, $\varepsilon_1\in \mathcal E''$ and $t_1 = 0$.
    Note that for $i\geq3$, $\alpha_i$ is of type (1) and $t_i=t(\alpha_i)$, while $g^{-t_2}(\alpha_2)$ is of type (2) and $\alpha_1$ is of type (3).
    Define cycles $\tilde c^\mu$ in $Q_G$ by $$\tilde c^\mu = \tilde{\alpha}_1^\mu \widetilde{g^{-t_2}(\alpha_2)}^\mu\tilde{\alpha}_3\cdots \tilde{\alpha}_l$$ for 
    $\mu = 0,\dots,n-1$, and call $p(c) = t_2$.
  \item Let $c$ be a cycle of type (iii) in $W$. There is a unique $\hat c\in G*c$ that can be written as above, with $\varepsilon_i\in \mathcal E''$ for $i\neq1$, $\varepsilon_1\in \mathcal E'$ and $t_i = 0$ for all $i$.
    Notice that for $i\geq 3$, $\alpha_i$ is of type (4), while $\alpha_2$ is of type (3) and $\alpha_1$ is of type (2).
    Put $b_i=b(\alpha_i)+\dots+b(\alpha_l)$ for all $i\geq 3$ and define cycles $\tilde c^\mu$ in $Q_G$ by 
    $$\tilde c^\mu =
    \tilde{\alpha}_1^\mu \tilde{\alpha}_2^{\mu-b_3} \tilde{\alpha}_3^{\mu-b_4} \cdots \tilde{\alpha}_{l-1}^{\mu-b_l} \tilde{\alpha}_l^\mu $$
    for $\mu = 0, \dots, n-1$. Call $q(c)=b_3$, and notice that $g(\hat c) = \zeta^{q(c)}g*\hat c$ (and in fact $g(c) = \zeta^{q(c)}g*c$).
  \item Let $c$ be a cycle of type (iv) in $W$. Thus $Gc = c$ in $\kk Q$ and we can write $\hat c = c$ as above, with $\varepsilon_i\in \mathcal E'$ and $t_i = 0$ for all $i$. 
    Notice that each $\alpha_i$ is an arrow of type (4).
    Put $b_i=b(\alpha_i)+\dots+b(\alpha_l)$ for all $i$ and define cycles $\tilde c^\mu$ in $Q_G$ by
    $$\tilde c^\mu = \tilde{\alpha}_1^{\mu-b_2} \tilde{\alpha}_2^{\mu-b_3} \cdots \tilde{\alpha}_{l-1}^{\mu-b_l} \tilde{\alpha}_l^\mu $$
    for $\mu = 0,\dots,n-1$.
\end{enumerate}
\end{nota}

Now define $\mathcal C(x) = \left\{ \hat c \ | \ c \text{ cycle of $W$ of type } x \right\}$ for $x =$~(i), (ii), (iii), (iv). Then $\mathcal C = \bigsqcup \mathcal C(x)$ is a cross-section of 
cycles of $W$ under the $*$ action of $G$. 

\begin{nota}\label{not:wg}
	We can now define a (finite) potential $W_G$ on $Q_G$ by setting
	\begin{align*}
	W_G &=\sum_{c\in \mathcal C({\rm i})} a(c)\frac{|Gc|}{n}\tilde c + \sum_{c\in \mathcal C({\rm ii})}a(c) \sum_{\mu = 0}^{n-1}\zeta^{-p(c)\mu}\tilde c^{\mu} 
	+ \sum_{c\in \mathcal C({\rm iii})\cup \mathcal C({\rm iv})}a(c)\sum_{\mu = 0}^{n-1}\tilde c^{\mu}.
	\end{align*}
\end{nota}

\begin{rmk}
Note that all cycles in $W_G$ have length at least $3$, since each of them has the same length as a cycle in $W$. Moreover the sums in $W_G$ are made over subsets of cycles which appear in $W$, hence they are all finite. This means that $W_G$ is indeed a finite potential in $Q_G$.
\end{rmk}

\subsection{Main result}\label{sec:main}
We are ready to state our main result. Recall that we assume that $(Q, W)$ is a QP with finite potential such that the cyclic derivatives of $W$ generate an admissible ideal of $\kk Q$. Call $\Lambda = \mathcal P(Q, W)$ 
the Jacobian algebra of $(Q, W)$.

\begin{thm}
  \label{thm:main} 
  Let $G$ be a finite cyclic group acting on $(Q, W)$ as per the assumptions \ref{ass:field}-\ref{ass:cycles}.
  Then
  \begin{align*}
    \mathcal P(Q_G, W_G) \cong \eta(\Lambda G)\eta.
  \end{align*}
\end{thm}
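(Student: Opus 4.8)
The plan is to use the isomorphism $J:\kk Q_G \to \eta((\kk Q)G)\eta$ already constructed and to show that it descends to an isomorphism of the relevant quotients. Write $\mathcal I = \langle \partial_\alpha W \mid \alpha\in Q_1\rangle\subseteq \kk Q$ for the Jacobian ideal, so that $\Lambda = \kk Q/\mathcal I$ by Proposition~\ref{prop:completion}. The first step is to observe that $\mathcal I$ is $G$-stable: cyclic differentiation is compatible with the action in the sense that $g(\partial_\alpha W)$ is a scalar multiple of $\partial_{g*\alpha}(gW)$, and by assumption~\ref{ass:potential} we have $gW = W$, so $g$ permutes the generators $\partial_\alpha W$ up to scalars. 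Hence $\mathcal I G$ is a two-sided ideal of $(\kk Q)G$ with $(\kk Q)G/\mathcal I G \cong \Lambda G$, and since $e(A/I)e\cong eAe/eIe$ for any idempotent $e$ and two-sided ideal $I$, we obtain
\begin{align*}
  \eta(\Lambda G)\eta \cong \eta\big((\kk Q)G\big)\eta \big/ \eta(\mathcal I G)\eta .
\end{align*}
Writing $\mathcal J = \langle \partial_{\tilde\alpha} W_G,\ \partial_{\tilde\alpha^\mu} W_G\rangle$ for the Jacobian ideal of $W_G$, the theorem therefore reduces through $J$ to the single identity $J(\mathcal J) = \eta(\mathcal I G)\eta$. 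Granting this, $J$ induces an isomorphism $\kk Q_G/\mathcal J \cong \eta(\Lambda G)\eta$; admissibility of $\mathcal J$ (needed to invoke Proposition~\ref{prop:completion} for $W_G$ and so dispense with the completion) then follows a posteriori, since the quotient is finite dimensional and each generator of $\mathcal J$ lies in $\langle (Q_G)_1\rangle^2$.

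The heart of the argument is a direct computation of $J(\partial_{\tilde\alpha} W_G)$ and $J(\partial_{\tilde\alpha^\mu} W_G)$, organised according to the four arrow types of Notation~\ref{not:arrowtypes}. Using the skew multiplication rule $(\alpha\otimes g^{s})(\alpha'\otimes g^{s'}) = \alpha\, g^{s}(\alpha')\otimes g^{s+s'}$ one checks that the cut points chosen for $\hat c$ in Notation~\ref{not:cmu} make the product $\tilde\alpha_1\cdots\tilde\alpha_l$ (resp.\ $\tilde\alpha_1^\mu\cdots$) collapse to $\hat c$ tensored with a single group element; consequently each cyclic derivative $\partial_{\tilde\alpha}\tilde c$ is sent by $J$ to the corresponding cyclic truncation of $\hat c$ — that is, to a cyclic derivative of $c$ — tensored with a group element. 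The scalar factors built into $W_G$ are exactly those needed to make this work after one sums over a whole $*$-orbit: the weight $|Gc|/n$ of type (i), the twist $\zeta^{-p(c)\mu}$ of type (ii), and the index shifts $\mu - b_i$ of types (iii) and (iv) conspire so that the contributions of the cycles in one orbit assemble, via the idempotents $e_\mu$, into $\eta\,(\partial_\alpha W\otimes g^{j})\,\eta$ for the appropriate representative $\alpha$ and exponent $j$. Here one uses repeatedly that the derivatives $\partial_\beta W$ for the various $\beta$ in a single $G$-orbit are all $G$-translates (up to scalar) of one $\partial_\alpha W$, again by assumption~\ref{ass:potential}.

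To finish I would verify both inclusions of $J(\mathcal J) = \eta(\mathcal I G)\eta$. For ``$\subseteq$'' the computation above exhibits every $J(\partial_{\tilde\alpha} W_G)$ and $J(\partial_{\tilde\alpha^\mu} W_G)$ as a $\kk$-linear combination of elements $\eta(\partial_\alpha W\otimes g^{j})\eta$, hence inside $\eta(\mathcal I G)\eta$. For ``$\supseteq$'' I would note that $\eta(\mathcal I G)\eta$ is generated as a two-sided ideal by the elements $\eta(\partial_\alpha W\otimes e_\mu)\eta$, and recover each of these from the $\mu$-indexed families of derivatives: the change of basis $e_\mu = \tfrac1n\sum_i\zeta^{i\mu}g^i$ and its inverse $g^i = \sum_\mu \zeta^{-i\mu}e_\mu$ form an invertible (Vandermonde) system by assumption~\ref{ass:field}, which converts the families $\{\partial_{\tilde\alpha^\mu}W_G\}_\mu$ back into the $g^i$-indexed generators. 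The main obstacle is entirely in the middle paragraph: tracking the group-element exponents and the scalars $\zeta^{b_i}$, $\zeta^{-p(c)\mu}$, $|Gc|/n$ through both the skew multiplication and the cyclic-derivative rotations, and confirming that the distinguished vertex in types (ii) and (iii) is handled correctly by the type-(2)/(3)/(4) factors. Everything else is formal.
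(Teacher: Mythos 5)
Your overall architecture (reduce via $J$ to an identity of ideals, then compute cyclic derivatives of $W_G$ against elements $\partial_\alpha W\otimes g^j$) mirrors the paper's Lemma~\ref{lem:key} plus Lemma~\ref{lem:1234}, and your middle paragraph is essentially Lemma~\ref{lem:1234} run in the opposite direction. The genuine gap is the step you dispose of with ``I would note'': the claim that $\eta(\mathcal I G)\eta$ is generated, as a two-sided ideal of $\eta((\kk Q)G)\eta$, by the compressed elements $\eta(\partial_\alpha W\otimes e_\mu)\eta$. This is not a generality about idempotents: compressing generators of an ideal does not in general give generators of the compressed ideal (take $A=M_2(\kk)$, $e=e_{11}$, $X=\{e_{22}\}$; then $eAXAe=\kk e_{11}$ while $eXe=0$). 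Justifying exactly this kind of statement is the purpose of the paper's \S\ref{sec:gen}: Proposition~\ref{prop:gen} shows compression commutes with ideal generation \emph{provided} the generators $x$ already satisfy $\eta x\eta=x$, and Lemma~\ref{lem:gener} manufactures such generators by translating $r\otimes 1$ by the units $1\otimes g$, replacing it with $g_r^{-1}(r)\otimes h_rg_r^{-1}$. To make your claim correct one needs both halves of this: (a) your elements lie in $\eta((\kk Q)G)\eta$ (true by construction), and (b) they generate $\mathcal I G$ as an ideal of the \emph{full} algebra $(\kk Q)G$, which requires an argument — for $\alpha$ of type (1) the compression equals $\tfrac{1}{n}\zeta^{t(\alpha)\mu}\,\partial_\alpha W\otimes g^{t(\alpha)}$, a unit translate of $\partial_\alpha W\otimes 1$; for types (2)--(4) one gets $\tfrac1n\,\partial_\alpha W\otimes 1$, or the elements $\partial_\alpha W\otimes e_\mu$ whose sum over $\mu$ is $\partial_\alpha W\otimes 1$; for arrows with $\mathfrak{t}(\alpha)\notin\mathcal E$ the compression is $0$, and those $G$-orbits are recovered using $h(\partial_\alpha W)=\partial_{h(\alpha)}W$. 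None of this appears in your proposal, and it is the structural heart of the proof rather than a remark.

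A second, smaller but real error: your a posteriori admissibility argument is invalid. Finite dimensionality of the quotient together with generators lying in $\langle (Q_G)_1\rangle^2$ does not imply admissibility: for the one-loop quiver, the ideal $\langle x^2-x^3\rangle$ is generated in the square of the arrow ideal and has a three-dimensional quotient, yet contains no power of $x$ (evaluate any identity $x^N=f(x)\,x^2(1-x)$ at $x=1$), so it is not admissible. Since $\mathcal P(Q_G,W_G)$ is defined through the completion, without admissibility you cannot invoke Proposition~\ref{prop:completion} to identify it with $\kk Q_G/\langle\partial_\gamma W_G\rangle$, so your chain of isomorphisms stops one step short of the theorem. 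The paper closes this by Lemma~\ref{lem:admissible}, which transfers admissibility of $\langle\partial_\alpha W\rangle$ through the skew group algebra using $(\on{rad}A)^i(AG)=(\on{rad}AG)^i$ and $\eta(\on{rad}AG)\eta=\on{rad}(\eta(AG)\eta)$; once your ideal identity $J(\mathcal J)=\eta(\mathcal I G)\eta$ is actually proved, that lemma yields admissibility of $\mathcal J$, and this is how your argument should be repaired.
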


We give a proof of this result in \S\ref{sec:proof}, and outline here the strategy we will use.
By \cite[\S2.3]{RR85}, the algebra $\eta(\Lambda G)\eta$ is isomorphic to $\kk Q_G$ modulo a certain ideal. Our first step, carried out in \S\ref{sec:gen}, is to give explicit generators for this ideal in our setting.
However, these generators will not be relations of $Q_G$ (i.e.,~linear combinations of paths in $Q_G$ with common start and end). 
In \S\ref{sec:comp}, we express them in terms of the derivatives of the potential $W_G$, which will allow us to conclude.

\begin{rmk}
  The statement that there exists a potential $W'$ such that $\mathcal P(Q_G, W') \cong \eta(\Lambda G)\eta$ follows, by taking the $0$-th cohomology of the corresponding dg algebras, from a much more general result proved in \cite[Corollary~1.3]{LM18}. Moreover,
  \cite[Lemma~4.4.1]{LM18} expresses a suitable $W'$ as an element of $\eta(\Lambda G)\eta$, and $W'$ is written as a linear combination of paths in $Q_G$ in the examples of \cite[\S4.5]{LM18}. 
  Our Theorem \ref{thm:main} states that the potential $W_G$, which we constructed under our assumptions \ref{ass:field}-\ref{ass:cycles}, has the same property.
\end{rmk}

\section{Proof of main result}\label{sec:core}
\subsection{Ideals of skew group algebras}\label{sec:gen}
In order to prove Theorem \ref{thm:main}, we need some observations about ideals of skew group algebras.

\begin{prop}\label{prop:gen}
Let $A$ be a ring and let $\eta$ be an idempotent of $A$. Let $I = AXA$ for some subset $X\subseteq A$, such that $\eta x\eta = x$ for all $x\in X$.
Then 
\begin{align*} 
  \eta \frac{A}{I}\eta = \frac{\eta A\eta}{\langle X\rangle}.
\end{align*}
\end{prop}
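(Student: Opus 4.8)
The plan is to realise both sides as quotients of the corner ring $\eta A \eta$ and then match them via the first isomorphism theorem. Write $\pi\colon A\to A/I$ for the canonical projection. First I would observe that restricting $\pi$ to the corner ring $\eta A\eta$ (a ring with unit $\eta$) yields a ring homomorphism $\eta A\eta\to\eta(A/I)\eta$, where the target is the corner ring of $A/I$ at the idempotent $\pi(\eta)$, with unit $\pi(\eta)$. Surjectivity is immediate: any element of $\eta(A/I)\eta$ has the form $\pi(\eta)\pi(a)\pi(\eta)=\pi(\eta a\eta)$ for some $a\in A$, hence is the image of $\eta a\eta\in\eta A\eta$.

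Next I would compute the kernel. An element $\eta a\eta$ maps to zero exactly when $\eta a\eta\in I$, so the kernel is $\eta A\eta\cap I$. Here I would establish the general fact (valid for any idempotent and any two-sided ideal) that $\eta A\eta\cap I=\eta I\eta$: the inclusion $\eta I\eta\subseteq\eta A\eta\cap I$ holds because $I$ is an ideal, and conversely any $x\in\eta A\eta\cap I$ satisfies $x=\eta x\eta\in\eta I\eta$. By the first isomorphism theorem this already gives the canonical isomorphism $\eta(A/I)\eta\cong\eta A\eta/\eta I\eta$, which is the content of the displayed equality.

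It then remains to identify $\eta I\eta$ with $\langle X\rangle$, the two-sided ideal of $\eta A\eta$ generated by $X$; this is the only place where the hypothesis $\eta x\eta=x$ is used, and I expect it to be the main (though still routine) obstacle. The key preliminary remark is that $\eta x\eta=x$ forces $\eta x=x\eta=x$ for every $x\in X$, since $\eta x=\eta(\eta x\eta)=\eta x\eta=x$ and likewise on the right. Now, with $I=AXA$, a typical generator of $\eta I\eta$ is $\eta a x b\eta$ with $a,b\in A$ and $x\in X$; inserting $\eta$ on either side of $x$ (permitted because $\eta x=x\eta=x$) rewrites it as $(\eta a\eta)\,x\,(\eta b\eta)\in\langle X\rangle$, since $\eta a\eta,\eta b\eta\in\eta A\eta$. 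Conversely, a generator $rxs$ of $\langle X\rangle$ with $r,s\in\eta A\eta$ equals $\eta r x s\eta$, hence lies in $\eta(AXA)\eta=\eta I\eta$. Combining these inclusions gives $\eta I\eta=\langle X\rangle$, and substituting into the isomorphism above completes the argument.

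I anticipate that the only genuine subtlety is bookkeeping with the idempotent: keeping straight that $\eta A\eta$ is a ring with unit $\eta$ rather than $1_A$, that $\langle X\rangle$ is formed \emph{inside} $\eta A\eta$, and that the hypothesis $\eta x\eta=x$ is precisely what converts the outer $A$-coefficients of $AXA$ into inner $\eta A\eta$-coefficients. No step is computationally heavy once this is handled carefully.
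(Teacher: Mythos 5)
Your proof is correct and follows essentially the same route as the paper's: both reduce the statement to the identity $\eta I\eta = \langle X\rangle$ and prove it by using the hypothesis $\eta x\eta = x$ to absorb the outer $A$-coefficients of $AXA$ into $\eta A\eta$. The only cosmetic differences are that you verify the reduction $\eta(A/I)\eta \cong \eta A\eta/\eta I\eta$ explicitly via the first isomorphism theorem (the paper treats it as standard), and you insert $\eta$'s elementwise where the paper phrases the same computation through the Peirce-type decomposition with $\kappa = 1-\eta$.
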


\begin{proof}
  It is enough to prove that $\eta I \eta = \langle X\rangle$. Let $\kappa = 1-\eta$.
  Then $\eta A = \eta A \eta \oplus \eta A \kappa$ and $A\eta = \eta A\eta \oplus \kappa A\eta$. Observe that $\eta x\eta = x$ implies $\kappa x = x\kappa  = 0$.
  Then 
  \begin{equation*}
    \eta I \eta = \eta A X A\eta = \eta A \eta X \eta A\eta \oplus \eta A \eta X \kappa A\eta\oplus \eta A \kappa X \eta A\eta \oplus \eta A \kappa X \kappa A\eta  = \eta A \eta X \eta A\eta = \langle X\rangle. \qedhere
  \end{equation*}
\end{proof}

Now retain the notation of Section~\ref{sec:setup}. So $\Lambda = \kk Q/\mathcal R$, where $\mathcal R=\langle R\rangle$ and $R=\{\partial_\alpha W \,|\, \alpha\in Q_1\}$, and the action of $G$ on $\Lambda$ leaves $R$ stable.
Then we know by \cite[\S2.2]{RR85} that 
\begin{align*}
  \Lambda G\cong \frac{(\kk Q)G}{\langle R\otimes 1\rangle}.
\end{align*}
Recall that we have an idempotent $\eta=\e\otimes1$, for an idempotent $\e$ in $\kk Q$, such that $\eta((\kk Q)G)\eta\cong\kk Q_G$. We have the following lemmas.

\begin{lemma}
  \label{lem:admissible}
  Suppose that $\langle R\rangle$ is an admissible ideal of $\kk Q$. Then the ideal $\eta\langle R\otimes 1\rangle\eta$ of $\eta((\kk Q)G)\eta$ is admissible.
\end{lemma}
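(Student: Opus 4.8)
The plan is to verify directly the two defining conditions of admissibility for the ideal $\eta\langle R\otimes1\rangle\eta$ inside $\eta((\kk Q)G)\eta\cong\kk Q_G$: that it sits inside the square of the arrow ideal, and that it contains some power of the arrow ideal. Throughout I would work with the grading of $\kk Q$ by path length. By assumption \ref{ass:permuted} the action of $G$ sends each arrow to a scalar multiple of an arrow, so it preserves this grading; hence $(\kk Q)G=\kk Q\otimes\kk G$ inherits a grading with $((\kk Q)G)_d=(\kk Q)_d\otimes\kk G$, and the idempotent $\eta=\e\otimes1$ is homogeneous of degree $0$. Writing $\mathfrak a=\langle Q_1\rangle$ for the arrow ideal of $\kk Q$ and $\mathfrak a G:=\mathfrak a\otimes\kk G$, the degree $\geq k$ part of $(\kk Q)G$ is exactly $\mathfrak a^k G$.

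The structural fact I would record first is that the isomorphism $J:\kk Q_G\to\eta((\kk Q)G)\eta$ is graded. Inspecting Notation~\ref{not:arrowtypes}, each generator $\tilde\alpha$ or $\tilde\alpha^\mu$ (of types (1)--(4)) is homogeneous of degree $1$ in $(\kk Q)G$, since it is built from a single arrow $\alpha\in Q_1$ tensored with an element of $\kk G$. As $J$ sends vertices to the degree-$0$ idempotents and arrows to these degree-$1$ elements, it is a graded isomorphism. Consequently the arrow ideal $\mathfrak r$ of $\eta((\kk Q)G)\eta$ is its positive-degree part, and since $\eta((\kk Q)G)\eta$ is a path algebra generated in degrees $0$ and $1$ we obtain $\mathfrak r^{\,k}=\eta(\mathfrak a^k G)\eta$ for every $k$.

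For the upper containment, admissibility of $\langle R\rangle$ gives $\langle R\rangle\subseteq\mathfrak a^2$ (equivalently, $R=\{\partial_\alpha W\}$ consists of derivatives of a sum of cycles of length $\geq 3$, hence lies in $\mathfrak a^2$). A general element of $\langle R\otimes1\rangle$ is a combination of terms $(\lambda\otimes g)(r\otimes1)(\mu\otimes h)=\lambda\,g(r)\,g(\mu)\otimes gh$ with $r\in R$; since $g$ preserves the grading, $g(r)\in\mathfrak a^2$, so the whole term lies in $\mathfrak a^2\otimes\kk G=\mathfrak a^2 G$. Thus $\langle R\otimes1\rangle\subseteq\mathfrak a^2 G$ and therefore $\eta\langle R\otimes1\rangle\eta\subseteq\eta(\mathfrak a^2 G)\eta=\mathfrak r^{\,2}$. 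For the lower containment, admissibility of $\langle R\rangle$ also gives $\mathfrak a^N\subseteq\langle R\rangle$ for some $N\geq 2$. I would first note $\langle R\rangle\otimes1\subseteq\langle R\otimes1\rangle$ (each generator $\lambda r\mu$ of $\langle R\rangle$ satisfies $\lambda r\mu\otimes1=(\lambda\otimes1)(r\otimes1)(\mu\otimes1)$), and then multiply on the right by $1\otimes g$ to get $\langle R\rangle\otimes\kk G\subseteq\langle R\otimes1\rangle$, using that the latter is a two-sided ideal. Hence $\mathfrak a^N G\subseteq\langle R\rangle\otimes\kk G\subseteq\langle R\otimes1\rangle$, and applying $\eta(-)\eta$ gives $\mathfrak r^{\,N}=\eta(\mathfrak a^N G)\eta\subseteq\eta\langle R\otimes1\rangle\eta$. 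Combining, $\mathfrak r^{\,N}\subseteq\eta\langle R\otimes1\rangle\eta\subseteq\mathfrak r^{\,2}$, which is admissibility.

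The only genuinely delicate point, and the one I would treat most carefully, is the identification $\mathfrak r^{\,k}=\eta(\mathfrak a^k G)\eta$ of the powers of the arrow ideal of $\eta((\kk Q)G)\eta$. Because $Q$ has oriented cycles, $\kk Q$ is infinite-dimensional, so one cannot invoke finite-dimensional radical arguments (or \cite[Theorem 1.3(d)(i)]{RR85}) directly; the grading is precisely what makes this identity of ideals rigorous and lets both containments be checked degreewise. Everything else reduces to the two routine ideal computations above.
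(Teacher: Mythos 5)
Your proof is correct, and its overall shape---sandwiching $\langle R\otimes 1\rangle$ between the second and $N$-th powers of the arrow ideal of $(\kk Q)G$ and then applying $\eta(-)\eta$ to land in the corner algebra---is the same as the paper's. The genuine difference lies in how the key identifications are justified. The paper obtains them by citation: it invokes \cite[Theorem 1.3(d)(ii)]{RR85} to get $(AG)(\on{rad}A)^i=(\on{rad}A)^i(AG)=(\on{rad}AG)^i$ for $A=\kk Q$, and then uses $\eta(\on{rad}AG)\eta=\on{rad}(\eta(AG)\eta)$ to transport the sandwich down to $\kk Q_G$. You instead prove the corresponding statements for the arrow ideal $\mathfrak a$ from scratch, via the path-length grading on $(\kk Q)G$ (preserved by $G$ thanks to \ref{ass:permuted}) and the observation that the Reiten--Riedtmann isomorphism $J$ is graded, which yields $\mathfrak r^{\,k}=\eta(\mathfrak a^k G)\eta$ and reduces both containments to degreewise checks. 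What your route buys is exactly the point you flag: the results of \cite{RR85} are stated for Artin algebras, whereas $\kk Q$ is infinite dimensional here (the potential consists of oriented cycles) and its Jacobson radical is not the arrow ideal, so the paper's citation must be read as an appeal to the arrow-ideal analogue of that theorem---which is precisely what your graded argument supplies. The cost is length: the paper's proof is a few lines, while yours sets up the grading and verifies that $J$ is graded (immediate from Notation~\ref{not:arrowtypes}, as you note). Both arguments are sound; yours makes the lemma self-contained and rigorous on a point the paper leaves implicit.
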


\begin{proof}
  Let $A=\kk Q$. Since $\mathcal R = \langle R\rangle$ is admissible, we have $(\on{rad}A)^N\subseteq \mathcal R\subseteq (\on{rad}A)^2$ for some $N\geq 2$.
  Consider $\mathcal R$ as a subset of $A G$ under the natural inclusion $A\to A G$, so $\langle R\otimes1 \rangle = (A G)\mathcal R(A G)$. By \cite[Theorem 1.3(d)(ii)]{RR85} we have $(A G)(\on{rad} A)^i=(\on{rad} A)^i(A G)=(\on{rad} A G)^i$ for all $i\geq1$, so
  $$(A G)(\on{rad} A)^N(A G) \subseteq (A G)\mathcal R(A G) \subseteq (A G)(\on{rad} A)^2(A G)$$
becomes
$$(\on{rad} A G)^N \subseteq \langle R\otimes1 \rangle \subseteq (\on{rad} A G)^2.$$
Then the claim follows from the fact that $\eta(\on{rad} A G)\eta=\on{rad}(\eta (A G)\eta)$.
\end{proof}

\begin{lemma}
  \label{lem:gener}
  For each $r\in R$, choose $g_r, h_r\in G$ such that $\mathfrak{t}(r)\in g_r(\mathcal E)$ and $\mathfrak{s}(r)\in h_r(\mathcal E)$.
  Then
		\begin{align*}
		  \eta \frac{(\kk Q)G}{\langle R\otimes 1\rangle}\eta = \frac{\eta ((\kk Q)G)\eta}{\langle g_r^{-1}(r)\otimes h_rg_r^{-1}\ |\ r\in R\rangle}.
	\end{align*}
\end{lemma}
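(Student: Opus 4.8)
The plan is to apply Proposition~\ref{prop:gen} to the ring $A = (\kk Q)G$, the idempotent $\eta = \e\otimes 1$, and a suitable generating set $X$ for the ideal $\langle R\otimes 1\rangle$. Proposition~\ref{prop:gen} requires a set $X$ with $\eta x\eta = x$ for all $x\in X$ and $\langle R\otimes 1\rangle = (A G) X (A G)$; once we have this, the proposition immediately gives $\eta\frac{(\kk Q)G}{\langle R\otimes 1\rangle}\eta = \frac{\eta((\kk Q)G)\eta}{\langle X\rangle}$, which is the claimed identity with $X = \{g_r^{-1}(r)\otimes h_r g_r^{-1}\mid r\in R\}$.

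So the work reduces to two verifications. First I would check that each proposed generator lies in the two-sided ideal $\langle R\otimes 1\rangle$ and, conversely, that these elements generate it. For the forward inclusion, note that $g_r^{-1}(r)\otimes h_r g_r^{-1} = (g_r^{-1}\otimes g_r^{-1})(r\otimes 1)(g_r^{-1}\otimes h_r g_r^{-1})$ — more precisely, I would write each generator as a product $(1\otimes u)(r\otimes 1)(1\otimes v)$ for appropriate group elements, using the multiplication rule $(\lambda\otimes g)(\mu\otimes h)=\lambda g(\mu)\otimes gh$ together with the fact that $G$ permutes the relations in $R$ (the action leaves $R$ stable, so $g^{-1}(r)$ is again, up to scalar, a relation or at least lies in $\mathcal R$). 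Since the $1\otimes u$ are units in $(\kk Q)G$, the ideal generated by $\{r\otimes 1\mid r\in R\}$ coincides with the ideal generated by $\{g_r^{-1}(r)\otimes h_r g_r^{-1}\mid r\in R\}$; this gives both inclusions simultaneously.

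Second, and this is the key point on which the lemma hinges, I would verify the idempotent-fixing condition $\eta\,(g_r^{-1}(r)\otimes h_r g_r^{-1})\,\eta = g_r^{-1}(r)\otimes h_r g_r^{-1}$. This is precisely why the group elements $g_r$ and $h_r$ were chosen as they were. Since $\eta = \e\otimes 1$ with $\e$ the idempotent at the vertex set $\mathcal E$, left-multiplication by $\eta$ picks out the part whose target lies in $\mathcal E$, and right-multiplication picks out the part whose source lies in $\mathcal E$. The element $g_r^{-1}(r)$ has target $g_r^{-1}(\mathfrak t(r))\in\mathcal E$ by the choice $\mathfrak t(r)\in g_r(\mathcal E)$, and source $g_r^{-1}(\mathfrak s(r))$; after multiplying by the group element $h_r g_r^{-1}$ on the right, the relevant source vertex becomes $(h_r g_r^{-1})^{-1}(g_r^{-1}(\mathfrak s(r))) = g_r h_r^{-1}g_r^{-1}(\mathfrak s(r))$, which lies in $\mathcal E$ exactly because $\mathfrak s(r)\in h_r(\mathcal E)$. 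I would make this bookkeeping precise by computing $\eta(x\otimes g) = \e x\otimes g$ and $(x\otimes g)\eta = x\,g(\e)\otimes g$ and checking that in each case the idempotent absorbs.

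The main obstacle is the second verification: tracking how the source and target vertices transform under the two-sided multiplication by the group elements $h_r g_r^{-1}$, and confirming that the choices $\mathfrak t(r)\in g_r(\mathcal E)$ and $\mathfrak s(r)\in h_r(\mathcal E)$ are exactly what makes both endpoints land in $\mathcal E$ so that $\eta$ acts as the identity. Once this is pinned down, everything else is a formal application of Proposition~\ref{prop:gen} together with the invertibility of the group-algebra units $1\otimes g$.
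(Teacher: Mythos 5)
Your proposal is correct and takes essentially the same route as the paper: the paper likewise writes $g_r^{-1}(r)\otimes h_rg_r^{-1} = (1\otimes g_r^{-1})(r\otimes 1)(1\otimes h_r)$ (your first displayed product is garbled, but your corrected form $(1\otimes u)(r\otimes 1)(1\otimes v)$ with $u=g_r^{-1}$, $v=h_r$ is exactly this identity), concludes that $R\otimes 1$ and $\{g_r^{-1}(r)\otimes h_rg_r^{-1}\}$ generate the same ideal since the $1\otimes u$ are units, then verifies $\eta(g_r^{-1}(r)\otimes h_rg_r^{-1})\eta = g_r^{-1}(r)\otimes h_rg_r^{-1}$ by the same endpoint bookkeeping and invokes Proposition~\ref{prop:gen}. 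The only points to make explicit in a final write-up are that the simplifications $g_r^{-1}h_r = h_rg_r^{-1}$ and $g_rh_r^{-1}g_r^{-1}=h_r^{-1}$ use that $G$ is abelian (automatic here since $G$ is cyclic), and that the stability of $R$ under $G$, which you mention, is not actually needed for the unit argument.
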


\begin{proof}
 We have
  \begin{align*}
     g_r^{-1}(r)\otimes h_rg_r^{-1}  = (1\otimes g_r^{-1})(r\otimes 1)(1\otimes h_r)
  \end{align*}
  so that $R\otimes 1$ generates the same ideal in $(\kk Q)G$ as the set $\left\{g_r^{-1}(r)\otimes h_rg_r^{-1} \ |\ r\in R \right\}$.
  Now 
  \begin{align*}
    \eta(g_r^{-1}(r)\otimes h_rg_r^{-1})\eta = \e g_r^{-1}(r) (h_r g_r^{-1})(\e)\otimes h_rg_r^{-1} = g_r^{-1}(r)\otimes h_rg_r^{-1},
  \end{align*}
so the claim follows from Proposition \ref{prop:gen}.
\end{proof}

\begin{lemma}
  \label{lem:key}
  In the assumptions \ref{ass:field}-\ref{ass:cycles}, we have 
  \begin{align*}
     \eta(\Lambda G)\eta \cong\frac{\eta ( (\kk Q)G)\eta}{\langle \partial_{g^{-t(\alpha)}\alpha} W\otimes g^{-t(\alpha)} \ |\ \alpha \text{ 
    of type } (1), (2), (3), (4)\rangle}.
  \end{align*}
\end{lemma}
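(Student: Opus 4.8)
The plan is to combine the previous two lemmas and then identify the generators of the ideal coming from a full set of relations $R = \{\partial_\alpha W \mid \alpha \in Q_1\}$ with the smaller, more convenient set of generators indexed only by arrows of types (1)--(4). By Lemma~\ref{lem:gener}, applied with the relations $R$, we already know that
\begin{align*}
  \eta(\Lambda G)\eta \cong \frac{\eta((\kk Q)G)\eta}{\langle g_r^{-1}(r)\otimes h_r g_r^{-1} \mid r\in R\rangle},
\end{align*}
where for each $r = \partial_\beta W$ we have chosen $g_r, h_r \in G$ with $\mathfrak t(r)\in g_r(\mathcal E)$ and $\mathfrak s(r)\in h_r(\mathcal E)$. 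So the task reduces to showing that the ideal generated by all the elements $g_r^{-1}(r)\otimes h_r g_r^{-1}$, as $\beta$ ranges over \emph{all} arrows of $Q$, coincides with the ideal generated by the subset indexed by $\alpha$ of types (1)--(4), and moreover that these chosen generators take the specific form $\partial_{g^{-t(\alpha)}\alpha}W\otimes g^{-t(\alpha)}$.

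\emph{First} I would exploit the $G$-equivariance of the cyclic derivative together with assumption~\ref{ass:potential} ($GW = W$). The key identity is that, since $W$ is $G$-invariant, applying $g$ to a relation $\partial_\beta W$ yields (up to the scalar by which $g$ rescales $\beta$) the relation $\partial_{g(\beta)}W$; concretely $g(\partial_\beta W) = \partial_{g*\beta}W$ up to a root of unity, because $\partial$ commutes with the automorphism $g$ when $W$ is fixed. This means that within a single $G$-orbit of arrows, the relations $\partial_\beta W$ are all $G$-translates (up to scalars) of one another, and hence the elements $g_r^{-1}(r)\otimes h_r g_r^{-1}$ in $\eta((\kk Q)G)\eta$ for $\beta$ in one orbit all generate the same two-sided ideal as a single chosen representative. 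Since every arrow of $Q$ lies in the $G$-orbit of a unique arrow of type (1), (2), (3), or (4), this collapses the generating set from all of $Q_1$ down to exactly the arrows of types (1)--(4).

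\emph{Next} I would pin down the precise shape of the chosen generator for each of the four arrow types. For an arrow $\alpha$ of type (1), we have $\alpha: g^{t(\alpha)}\varepsilon \to \varepsilon'$ with $\varepsilon,\varepsilon'\in\mathcal E'$, so $\mathfrak t(\partial_\alpha W) = \mathfrak s(\alpha) = g^{t(\alpha)}\varepsilon$, forcing the choice $g_r = g^{t(\alpha)}$, whereas $\mathfrak s(\partial_\alpha W) = \mathfrak t(\alpha) = \varepsilon'\in\mathcal E$, so $h_r = 1$. This gives the generator $g^{-t(\alpha)}(\partial_\alpha W)\otimes g^{-t(\alpha)}$, and using the commutation of $g^{-t(\alpha)}$ with $\partial$ (again via $GW=W$) together with the convention $\partial_{\lambda\alpha} = \lambda^{-1}\partial_\alpha$, one rewrites $g^{-t(\alpha)}(\partial_\alpha W) = \partial_{g^{-t(\alpha)}\alpha}W$, exactly matching the claimed form. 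For types (2), (3), (4) we have $t(\alpha) = 0$ by Notation~\ref{not:t}, and the chosen representatives $\alpha$ already satisfy $\mathfrak s(\alpha)\in\mathcal E$ or $\mathfrak t(\alpha)\in\mathcal E$ as appropriate, so the same bookkeeping yields the generator $\partial_\alpha W\otimes 1 = \partial_{g^{-t(\alpha)}\alpha}W\otimes g^{-t(\alpha)}$, uniformly covering all four cases.

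\emph{The main obstacle} I anticipate is the careful verification that passing to a single orbit representative really does preserve the generated ideal, and that the scalar factors introduced by $g$ acting on arrows (the roots of unity $\zeta^{b(\alpha)}$ for type (4), and the $*$-versus-genuine action discrepancy for types (2)--(3)) are genuinely invertible units that do not alter the ideal. One must check that $g^j(\partial_\beta W)\otimes g^j$ and $\partial_{g^j*\beta}W \otimes g^j$ differ only by a nonzero scalar, which is where assumptions~\ref{ass:fixed}, \ref{ass:arrows}, and \ref{ass:cycles} enter: they guarantee that $g$ sends each relation to a scalar multiple of another relation rather than to a genuine linear combination, so the orbit-collapse is clean. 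Once this is established the three displayed isomorphisms chain together and the lemma follows.
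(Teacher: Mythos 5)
Your proposal is correct and follows essentially the same route as the paper: both rest on Lemma~\ref{lem:gener} combined with the $G$-equivariance identity $h(\partial_\alpha W)=\partial_{h(\alpha)}W$ (valid up to a root of unity via the convention $\partial_{\lambda\alpha}=\lambda^{-1}\partial_\alpha$), and both identify the choices $g_r=g^{t(\alpha)}$, $h_r=1$ for arrows of types (1)--(4). The only difference is cosmetic: the paper collapses the generating set $\{\partial_\alpha W\mid\alpha\in Q_1\}\otimes 1$ to the orbit representatives of types (1)--(4) \emph{before} invoking Lemma~\ref{lem:gener}, whereas you invoke the lemma on the full set and collapse the orbits afterwards inside $\eta((\kk Q)G)\eta$.
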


\begin{proof}
  Since $G$ acts on $W$, the ideal of $\kk Q$ generated by $\left\{ \partial_\alpha W \right\}\otimes 1$ is also generated by 
  $$\left\{ \partial_\alpha W\ |\ \alpha \text{ 
    of type } (1), (2), (3), (4) \right\}\otimes 1,$$
    since $h(\partial_\alpha W) = \partial_{h(\alpha)}W$ for any $h\in G$.
    Notice that $\alpha$ is of type (1), (2), (3), (4) precisely if \mbox{$\mathfrak{s}(\partial_\alpha W) = \mathfrak{t}(\alpha) \in \mathcal E$}, and then $\mathfrak{t}(\partial_\alpha W) = \mathfrak{s}(\alpha) \in g^{t(\alpha)}(\mathcal E)$.
  Then we can apply Lemma \ref{lem:gener} with $g_r = g^{t(\alpha)}$ and $h_r = 1$, and we get the claim.
\end{proof}

\subsection{Derivatives of $W_G$ as elements of $\eta (\Lambda G) \eta$}\label{sec:comp}
In this section we shall express elements of the form \mbox{$\partial_{g^{-t(\alpha)}\alpha} W\otimes g^{-t(\alpha)}$} for $\alpha$ of type (1), (2), (3), (4) in terms of the derivatives of the potential $W_G$. 
Precisely, identifying $\eta( (\kk Q)G )\eta$ with $\kk Q_G$ via the isomorphism $J $ of \S\ref{sec:quiver}, each $\partial_{g^{-t(\alpha)}\alpha} W\otimes g^{-t(\alpha)}$ corresponds to 
$\sum_{i,j\in (Q_G)_0}x_{ij}$, where $x_{ij}$ is a linear combination of paths in $Q_G$ from vertex $i$ to vertex $j$ (i.e.,~a relation of $Q_G$). 
In Lemma \ref{lem:1234} we describe the elements $x_{ij}$ in terms of the derivatives of $W_G$, in a way that depends on the type of $\alpha$.
This will be the last ingredient we need in order to prove Theorem \ref{thm:main}.
We advise the reader to compare Lemma \ref{lem:1234} with the computations carried out in \cite[\S4.5]{LM18}.

In the proof of Lemma \ref{lem:1234}, we will use the following identities.
\begin{lemma}
  \label{lem:e_mu}
  If $\alpha\in Q_1$ and $\beta$ is an arrow of type (4), then
  \begin{align*}
    (\alpha\otimes e_\mu)(\beta\otimes e_\nu) = 
    \begin{cases}
      \alpha\beta\otimes e_\nu,  \text{\ \ if } \nu  = \mu + b(\beta); \\
      0, \text{ \ \ otherwise}.
    \end{cases}
  \end{align*}
\end{lemma}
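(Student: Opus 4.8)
The plan is to prove this by a direct computation in the skew group algebra, reducing everything to the orthogonality relation for characters of the cyclic group $G$. First I would expand both idempotents using their definition from Notation~\ref{not:emu}, writing $e_\mu = \frac{1}{n}\sum_{i=0}^{n-1}\zeta^{i\mu}g^i$ and similarly for $e_\nu$, so that
\begin{align*}
(\alpha\otimes e_\mu)(\beta\otimes e_\nu) = \frac{1}{n^2}\sum_{i=0}^{n-1}\sum_{j=0}^{n-1}\zeta^{i\mu+j\nu}(\alpha\otimes g^i)(\beta\otimes g^j).
\end{align*}

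Next I would apply the multiplication rule of the skew group algebra together with the hypothesis that $\beta$ is of type (4). By assumption~\ref{ass:fixed} this gives $g^i(\beta) = \zeta^{ib(\beta)}\beta$, so that $(\alpha\otimes g^i)(\beta\otimes g^j) = \alpha g^i(\beta)\otimes g^{i+j} = \zeta^{ib(\beta)}(\alpha\beta\otimes g^{i+j})$. Substituting this in and reindexing the double sum via $k=i+j$ (using that the exponents live in $\Z/n\Z$) I would obtain
\begin{align*}
(\alpha\otimes e_\mu)(\beta\otimes e_\nu) = \frac{1}{n^2}\sum_{k=0}^{n-1}\left(\sum_{i=0}^{n-1}\zeta^{i(\mu-\nu+b(\beta))}\right)\zeta^{k\nu}(\alpha\beta\otimes g^k).
\end{align*}

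The crux is then the inner character sum. By assumption~\ref{ass:field}, $\zeta$ is a primitive $n$-th root of unity, so $\sum_{i=0}^{n-1}\zeta^{i(\mu-\nu+b(\beta))}$ equals $n$ when $\nu\equiv\mu+b(\beta)\pmod n$ and vanishes otherwise. In the nonvanishing case the remaining factor simplifies to $\frac{1}{n}\sum_{k=0}^{n-1}\zeta^{k\nu}(\alpha\beta\otimes g^k) = \alpha\beta\otimes e_\nu$, which is exactly the two cases in the statement. I do not expect any genuine obstacle here: the only points requiring care are the reindexing $k=i+j$ modulo $n$ and the invocation of assumption~\ref{ass:field} to guarantee both that $n$ is invertible in $\kk$ and that the orthogonality relation for the characters of $\Z/n\Z$ holds.
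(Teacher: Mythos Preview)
Your argument is correct and follows the same underlying idea as the paper: use $g^i(\beta)=\zeta^{ib(\beta)}\beta$ and then invoke orthogonality of characters. The paper's execution is slightly more economical: it expands only $e_\mu$ (not $e_\nu$), obtaining
\[
(\alpha\otimes e_\mu)(\beta\otimes e_\nu)=\alpha\beta\otimes\frac{1}{n}\sum_{i=0}^{n-1}\zeta^{i(\mu+b(\beta))}g^i e_\nu=\alpha\beta\otimes e_{\mu+b(\beta)}e_\nu,
\]
and then concludes by the orthogonality of the primitive idempotents $e_\lambda$ in $\kk G$. This avoids the double sum and the reindexing $k=i+j$, but the content is the same.
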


\begin{proof}
  We compute
  \begin{align*}
    (\alpha\otimes e_\mu)(\beta\otimes e_\nu) &= 
    \frac{1}{n}\sum_{i = 0}^{n-1}\zeta^{i\mu}(\alpha\otimes g^i  )(\beta\otimes e_\nu) =\\
    &= \frac{1}{n}\sum_{i = 0}^{n-1}\zeta^{i\mu} \alpha g^i(\beta) \otimes g^i e_\nu =\\
    &=  \frac{1}{n}\sum_{i = 0}^{n-1}\zeta^{i(\mu+b(\beta))} \alpha \beta \otimes g^i e_\nu =\\
    &=  \alpha \beta \otimes\frac{1}{n}\sum_{i = 0}^{n-1} \zeta^{i(\mu+b(\beta))} g^i e_\nu = \\
    &= \alpha\beta\otimes e_{\mu+b(\beta)}e_\nu
  \end{align*}
  and this proves the claim.
\end{proof}

\begin{lemma}
  If $c$ is a cycle of type (iii), then $a(g*c) = \zeta^{q(c)}a(c)$.
\end{lemma}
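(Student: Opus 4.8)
The plan is to read off the identity from the $G$-invariance of the potential (assumption~\ref{ass:potential}) by a coefficient comparison, after first making the relation $g(c) = \zeta^{q(c)}(g*c)$ recorded in Notation~\ref{not:cmu}(iii) completely explicit.

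First I would justify $g(c) = \zeta^{q(c)}(g*c)$ for every type-(iii) cycle $c$. Writing $\hat c = \alpha_1\alpha_2\cdots\alpha_l$ as in Notation~\ref{not:cmu}(iii), the arrows $\alpha_1$ and $\alpha_2$ (of types (2) and (3)) each have a non-fixed endpoint, so by assumption~\ref{ass:arrows} they are sent to arrows: $g(\alpha_j) = g*\alpha_j$. The remaining $\alpha_3,\dots,\alpha_l$ are of type (4), hence run between fixed vertices and satisfy $g(\alpha_i) = \zeta^{b(\alpha_i)}\alpha_i$ with $g*\alpha_i = \alpha_i$. Multiplying these together gives $g(\hat c) = \zeta^{b(\alpha_3)+\dots+b(\alpha_l)}(g*\hat c) = \zeta^{q(c)}(g*\hat c)$. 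The identical computation applies to any cycle in the $*$-orbit $G*\hat c$, since along the orbit the type-(4) arrows (and their values $b(\alpha_i)$) are unchanged while the type-(2) and type-(3) arrows are again sent to arrows. Hence $g(c) = \zeta^{q(c)}(g*c)$ for all type-(iii) cycles $c$.

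Next I would apply $g$ to the expansion $W = \sum_{c'} a(c')c'$ and invoke assumption~\ref{ass:potential}, obtaining $\sum_{c'} a(c')\,g(c') = g(W) = W = \sum_{c'} a(c')c'$ in $\widehat{\kk Q}/\on{com}_Q$. I then compare the coefficient of the type-(iii) cycle $g*c$, regarded as a cyclic word, on both sides. The forgetful action $g*$ is induced by the underlying permutation of the arrow set of $Q$ (assumption~\ref{ass:permuted}), so it is a bijection on cyclic words; and for every $c'$ the element $g(c')$ is a scalar multiple of $g*c'$. Consequently the only term on the left-hand side contributing to $g*c$ is the one indexed by $c' = c$, and it has coefficient $a(c)\zeta^{q(c)}$, while on the right-hand side the coefficient of $g*c$ is $a(g*c)$. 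Equating the two gives $a(g*c) = \zeta^{q(c)}a(c)$.

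I do not anticipate a genuine difficulty: the core of the argument is a single coefficient comparison. The only point that needs a little care is the treatment of cycles up to cyclic permutation, namely the injectivity of $g*$ on cyclic words, which ensures that the coefficient of $g*c$ in $g(W)$ isolates exactly the term coming from $c$; this follows at once from the fact that $g*$ is a permutation of the arrows.
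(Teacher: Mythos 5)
Your proof is correct and follows essentially the same route as the paper: the paper likewise deduces the claim by combining the relation $g(c) = \zeta^{q(c)}\,g*c$ (recorded in Notation~\ref{not:cmu}(iii)) with the $G$-invariance of $W$ from assumption~\ref{ass:potential}, which forces $g(a(c)c) = a(g*c)\,g*c$. Your write-up merely makes explicit two points the paper leaves implicit, namely the arrow-by-arrow verification of $g(c)=\zeta^{q(c)}g*c$ and the injectivity of $g*$ on cyclic words used in the coefficient comparison.
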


\begin{proof}
  From assumption \ref{ass:potential}, it follows that $g(a(c)c) = a(g*c)g*c$. Then we get the claim since $g(c) = \zeta^{q(c)}g*c$.
\end{proof}

Now we use the identification $\kk Q_G\cong \eta((\kk Q)G)\eta$ to see cyclic derivatives of $W_G$ as elements of $\eta((\kk Q)G)\eta$.
To avoid clogging the notation, we will at times write $h \alpha$ and $h c$ instead of $h(\alpha) $ and $h(c)$ for $h\in G$.

\begin{lemma}
  \begin{enumerate}
  \label{lem:1234}
\item Let $\alpha$ be an arrow of $Q$ of type (1). Let $\beta = g^{-t(\alpha)}(\alpha)$. Then 
  \begin{align*}
    \partial_\beta W\otimes g^{-t(\alpha)} =  \partial_{\tilde \alpha}W_G.
  \end{align*}
\item  Let $\alpha$ be an arrow of $Q$ of type (2). Then
  \begin{align*}
    \partial_\alpha W\otimes 1 =\sum_{\mu = 0}^{n-1} \partial_{\tilde \alpha^\mu}W_G.
  \end{align*}
  In particular, 
  \begin{align*}
    \eta^{\mathfrak{s}(\alpha)} (\partial_\alpha W\otimes 1) \eta^{\mathfrak{t}(\alpha)}_\mu   =\partial_{\tilde \alpha^\mu}W_G
 \end{align*}
 for every $\mu = 0, \dots, n-1$.

\item Let $\alpha$ be an arrow of $Q$ of type (3). Then 
  \begin{align*}
    \partial_\alpha W\otimes 1 =\sum_{\mu = 0}^{n-1} \partial_{\tilde \alpha^\mu}W_G.
  \end{align*}
  In particular, 
  \begin{align*}
    \eta^{\mathfrak{s}(\alpha)}_{\mu} (\partial_\alpha W\otimes 1) \eta^{\mathfrak{t}(\alpha)}  = \partial_{\tilde \alpha^\mu}W_G
 \end{align*}
 for every $\mu = 0, \dots, n-1$.

\item  Let $\alpha$ be an arrow of  type (4). Then 
\begin{align*}
  \partial_\alpha W\otimes 1 = n \sum_{\mu = 0}^{n-1} \partial_{\tilde \alpha^\mu}W_G.
\end{align*}
In particular, 
  \begin{align*}
    \eta^{\mathfrak{s}(\alpha)}_{\mu} (\partial_\alpha W\otimes 1) \eta^{\mathfrak{t}(\alpha)}_{\mu-b(\alpha)}  =  n \partial_{\tilde \alpha^\mu}W_G
 \end{align*}
 for every $\mu = 0, \dots, n-1$.
 \end{enumerate}

\end{lemma}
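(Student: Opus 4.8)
The strategy is to compute the cyclic derivative $\partial_{\tilde\alpha}W_G$ (or $\sum_\mu\partial_{\tilde\alpha^\mu}W_G$) directly from the explicit formula for $W_G$ in Notation~\ref{not:wg}, and then to translate each resulting cycle in $Q_G$ back into $\eta((\kk Q)G)\eta$ via the isomorphism $J$, matching it against $\partial_\beta W\otimes g^{-t(\alpha)}$. The essential point is that a derivative $\partial_{\tilde\alpha}$ picks out exactly those cycles $\tilde c$ (respectively $\tilde c^\mu$) in which the arrow $\tilde\alpha$ appears, so the four cases of the lemma correspond precisely to how an arrow of type $(1)$--$(4)$ can sit inside the cycles $\tilde c,\tilde c^\mu$ of types (i)--(iv) built in Notation~\ref{not:cmu}. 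I would treat the four cases in turn, in each case first identifying which cycles of $W$ contribute and then carrying out the bookkeeping of group-algebra elements $e_\mu$ and powers of $g$.

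\emph{Case (1).} Here $\alpha$ has type $(1)$, so it can only appear inside cycles $\tilde c$ coming from type-(i) cycles of $W$, where $\tilde c=\tilde\alpha_1\cdots\tilde\alpha_l$ with each $\tilde\alpha_j=\alpha_j\otimes g^{t(\alpha_j)}$. Differentiating a single such cycle and using the multiplication rule $(\lambda\otimes g)(\mu\otimes h)=\lambda\,g(\mu)\otimes gh$, I expect the product of the remaining $\tilde\alpha_j$ to reassemble, up to a global twist by $g^{-t(\alpha)}$, into the corresponding factor of $\partial_\beta W$ tensored with the appropriate group element; the combinatorial coefficient $\tfrac{|Gc|}{n}$ in $W_G$ is exactly what compensates for the fact that differentiating $W=\sum_c a(c)c$ sums over the whole $G$-orbit $G*c$ while $W_G$ only records one representative. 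I would make this orbit-counting explicit and check the powers of $g$ telescope correctly.

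\emph{Cases (2), (3), (4).} These are analogous but now $\tilde\alpha^\mu$ lives in cycles of types (ii), (iii), (iv), and the derivatives are summed over $\mu$. The key computational tool is Lemma~\ref{lem:e_mu}, which governs how the idempotents $e_\mu$ concatenate when two type-(4) arrows (or a type-(4) and adjacent arrow) are multiplied; its case distinction $\nu=\mu+b(\beta)$ is precisely what forces the index shifts $\mu-b_i$ appearing in the definition of $\tilde c^\mu$. In case (2) the factor $\zeta^{-p(c)\mu}$ in $W_G$ must be shown to cancel against a factor $\zeta^{p(c)\mu}$ produced when rewriting $\tilde\alpha_1^\mu\widetilde{g^{-t_2}(\alpha_2)}^\mu$ in terms of the original arrows, and in case (3) the relation $a(g*c)=\zeta^{q(c)}a(c)$ just proved is what reconciles the coefficients; case (4) is the cleanest, with the extra factor $n$ arising because every one of the $n$ arrows in the relevant span contributes identically. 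The main obstacle I anticipate is the signs-and-indices bookkeeping in cases (2) and (3): one must track simultaneously the power-of-$\zeta$ twists from the action $g(\alpha_j)=\zeta^{b(\alpha_j)}\alpha_j$ on type-(4) legs, the shifts in the idempotent indices dictated by $b_i=b(\alpha_i)+\dots+b(\alpha_l)$, and the compensating coefficients $\zeta^{-p(c)\mu}$, and verify they all conspire to leave exactly $\partial_\alpha W\otimes 1$. Once the type-(ii) and type-(iii) cases are pinned down, the ``in particular'' refinements follow immediately by left- and right-multiplying by the idempotents $\eta^{\mathfrak s(\alpha)},\eta^{\mathfrak t(\alpha)}_\mu$ etc., since these project onto the single value of $\mu$ that survives.
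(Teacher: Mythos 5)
Your overall strategy (expand $W_G$ via Notation~\ref{not:wg}, differentiate, and match terms against $\partial_\beta W$ using Lemma~\ref{lem:e_mu} and orbit-counting) is exactly the paper's strategy, but your plan for case (1) contains a genuine error that would make the computation fail. You assert that an arrow $\alpha$ of type (1) ``can only appear inside cycles $\tilde c$ coming from type-(i) cycles of $W$.'' This is false: a cycle $c$ of type (ii) passes through exactly one fixed vertex, so besides its one type-(2) and one type-(3) leg it consists entirely of (orbits of) type-(1) arrows, and the associated cycles $\tilde c^\mu = \tilde{\alpha}_1^\mu \widetilde{g^{-t_2}(\alpha_2)}^\mu\tilde{\alpha}_3\cdots \tilde{\alpha}_l$ contain the arrows $\tilde\alpha_3,\dots,\tilde\alpha_l$ coming from type-(1) arrows. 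Hence $\partial_{\tilde\alpha}W_G$ receives contributions from the second sum in $W_G$ (over $\mathcal C({\rm ii})$, with the coefficients $\zeta^{-p(c)\mu}$), and correspondingly $\partial_\beta W$ receives contributions from type-(ii) cycles of $W$. The paper's proof of part (1) splits into two claims, (a1) for $\mathcal C({\rm i})$ and (b1) for $\mathcal C({\rm ii})$, and claim (b1) is not a routine repetition of (a1): it requires summing $\sum_{\mu}\zeta^{-p(c)\mu}\partial_{\tilde\alpha}\tilde c^\mu$ and using the identities $\zeta^{-t_2\mu}e_\mu = g^{t_2}e_\mu$ and $\sum_\mu e_\mu = 1$ to reassemble the group-algebra factors. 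Omitting this leaves both sides of the identity in case (1) incorrectly computed, so the equality you verify would only be the type-(i) part of the true statement.

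Two smaller inaccuracies: the relation $a(g*c)=\zeta^{q(c)}a(c)$ is needed in case (4) (where type-(iii) cycles are summed over their $G$-orbit and the coefficients genuinely twist), not in case (3), where the paper instead observes that $\alpha\in h*c$ forces $h=1$ so only the chosen representative contributes; and case (4) is in fact the heaviest computation in the paper (the factor $n$ comes from the degenerate $*$-orbits of type-(iii) and type-(iv) cycles, each orbit sum collapsing to $n$ identical terms), not the cleanest. These could be fixed within your framework, but the case (1) omission is a structural gap you would have to repair before the proof goes through.
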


\begin{proof}
First notice that the second part of statements (2), (3), (4) follows directly by multiplying $\sum\partial_{\tilde \alpha^\mu}W_G$, which is a linear combination of paths in $Q_G$, with idempotents corresponding to vertices of $Q_G$. 

  It will be convenient to use the following notation: for integers $t_1,\dots, t_l$, write
  \begin{align*}
    t_{i,j} = 
    \begin{cases}
      t_i + t_{i+1} + \cdots +t_j, \text{  if  } j\geq i; \\
      t_i + t_{i+1} + \cdots +t_{l}+ t_1 + t_2 + \cdots +t_j, \text{  if  } j<i.
    \end{cases}
  \end{align*}

  \begin{enumerate}
     \item We have that
  $$\partial_\beta W\otimes g^{-t(\alpha)} = \sum_{c \text{ of type }({\rm i})} a(c) \partial_\beta c \otimes g^{-t(\alpha)} + \sum_{c \text{ of type }({\rm ii})} a(c) \partial_\beta c \otimes g^{-t(\alpha)}$$
  and
  $$\partial_{\tilde\alpha} W_G = \frac{|Gc|}{n}\sum_{c\in \mathcal C({\rm i})} a(c) \partial_{\tilde\alpha}\tilde c + \sum_{c \in \mathcal C({\rm ii})} a(c) \sum_{\mu = 0}^{n-1} \zeta^{-p(c)\mu} \partial_{\tilde\alpha} \tilde c^{\mu}.$$
  The statement will be proved using the following two claims:
  
  \textbf{Claim (a1).} If $c\in \mathcal C({\rm i})$, then  
      $$\sum_{r=0}^{n-1} \partial_\beta g^rc \otimes g^{-t(\alpha)} = \partial_{\tilde\alpha}\tilde c.$$
      
    \textbf{Claim (b1).} If $c\in \mathcal C({\rm ii})$, then  
$$\sum_{r=0}^{n-1} \partial_\beta g^rc \otimes g^{-t(\alpha)} = \sum_{\mu = 0}^{n-1}\zeta^{-p(c)\mu}\partial_{\tilde\alpha^\mu}\tilde c^\mu .$$

  Assuming these claims hold, let us prove the statement. 
  Recall that by assumption \ref{ass:arrows}, $gc = g*c$ if $c$ is of type (i) or (ii).
  We have
  \begin{align*}
     \sum_{c \text{ of type }({\rm i})} a(c) \partial_\beta c \otimes g^{-t(\alpha)} &= 
     \sum_{c\in \mathcal C({\rm i})}\sum_{r = 0}^{|Gc|-1}a(g^r* c)\partial_\beta(g^r*c)\otimes g^{-t(\alpha)} = \\
     &= \sum_{c\in \mathcal C({\rm i})}\frac{|Gc|}{n}\frac{n}{|Gc|}\sum_{r = 0}^{|Gc|-1}a(c)\partial_\beta g^rc\otimes g^{-t(\alpha)} = \\
     &= \sum_{c\in \mathcal C({\rm i})}\frac{|Gc|}{n}\sum_{r = 0}^{n-1}a(c)\partial_\beta g^rc\otimes g^{-t(\alpha)}  = \\
     &= \frac{|Gc|}{n}\sum_{c\in \mathcal C({\rm i})}a(c)\partial_{\tilde\alpha}\tilde c
  \end{align*}
  and
  \begin{align*}
    \sum_{c \text{ of type }({\rm ii})} a(c) \partial_\beta c \otimes g^{-t(\alpha)} &= 
     \sum_{c\in \mathcal C({\rm ii})}\sum_{r = 0}^{|Gc|-1}a(g^r* c)\partial_\beta(g^r*c)\otimes g^{-t(\alpha)} = \\
     &= \sum_{c\in \mathcal C({\rm ii})}\sum_{r = 0}^{n-1}a(c)\partial_\beta g^rc\otimes g^{-t(\alpha)} = \\
     &= \sum_{c\in \mathcal C({\rm ii})}a(c)\sum_{\mu = 0}^{n-1}\zeta^{-p(c)\mu}\partial_{\tilde\alpha^\mu}\tilde c^\mu 
  \end{align*}
  which together imply that
  \begin{align*}
    \partial_\beta W\otimes g^{-t(\alpha)} =  \partial_{\tilde \alpha}W_G.
  \end{align*}
 It remains to prove the claims (a1) and (b1).
  
  \textbf{Proof of (a1).} Since $c\in \mathcal C({\rm i})$ we can write
  $$
    \xymatrix{
      c  &:& \varepsilon_0 = \varepsilon_l \ar[rr]^-{g^{t_{1,l-1}}(\alpha_l)} & &
      g^{t_{1,l-1}}(\varepsilon_{l-1})\ar[r] & \cdots \ar[r] & g^{t_1}(\varepsilon_1)
      \ar[r]^{\alpha_1} &\varepsilon_0.
    }
    $$
  Let $M = \{m\in\{1,\dots,l\} \,|\, \alpha=\alpha_m\}$. Then
  \begin{align*}
    \partial_{\tilde\alpha}\tilde c &= \partial_{\tilde\alpha} \tilde\alpha_1 \cdots \tilde\alpha_l = \sum_{m\in M} \tilde\alpha_{m+1} \cdots \tilde\alpha_{m-1} =\\
    &= \sum_{m\in M} (\alpha_{m+1}\otimes g^{t_{m+1}})\cdots (\alpha_{m-1}\otimes g^{t_{m-1}}) = \\
    &= \sum_{m\in M} \alpha_{m+1} g^{t_{m+1}}(\alpha_{m+2}) \cdots g^{t_{m+1,m-2}}(\alpha_{m-1}) \otimes g^{-t_m}.
  \end{align*}
  Note that $t_m=t(\alpha)$ for all $m\in M$, so we are left to prove that
  $$\sum_{m\in M} \alpha_{m+1} g^{t_{m+1}}(\alpha_{m+2}) \cdots g^{t_{m+1,m-2}}(\alpha_{m-1}) = \sum_{r=0}^{n-1} \partial_\beta g^rc.$$
  For each $r=0,\dots,n-1$ and $m\in M$, the path $g^rc$ contains the arrow $g^{r+t_{1,m-1}}\alpha_m = g^{r+t_{1,m}}\beta$.
  Hence, if we define $M_r = \{m\in M \,|\, r=-t_{1,m}\}$, we have that
  $$\partial_\beta g^rc = \sum_{m\in M_r} \alpha_{m+1} g^{t_{m+1}}(\alpha_{m+2}) \cdots g^{t_{m+1,m-2}}(\alpha_{m-1}).$$
  So the equality we wanted to show becomes
  $$\sum_{m\in M} \alpha_{m+1} g^{t_{m+1}}(\alpha_{m+2}) \cdots g^{t_{m+1,m-2}}(\alpha_{m-1}) = \sum_{r=0}^{n-1} \sum_{m\in M_r} \alpha_{m+1} g^{t_{m+1}}(\alpha_{m+2}) \cdots g^{t_{m+1,m-2}}(\alpha_{m-1}),$$
  but this holds because $M = \bigsqcup_{r=0}^{n-1} M_r$.
  
\textbf{Proof of (b1).} Since $c\in \mathcal C({\rm ii})$ we can write
  $$
   \xymatrix{
      c  &:& \varepsilon_0 = \varepsilon_l \ar[rr]^-{g^{t_{1,l-1}}(\alpha_l)} & &
      g^{t_{1,l-1}}(\varepsilon_{l-1})\ar[r] & \cdots \ar[r] & g^{t_2}(\varepsilon_2)\ar[r]^-{\alpha_2} &
      \varepsilon_1
      \ar[r]^{\alpha_1} &\varepsilon_0.
    }
  $$
  Recall that by definition $p(c)=t_2$.  
  Let $M = \{m\in\{1,\dots,l\} \,|\, \alpha=\alpha_m\}$.  Then
  \begin{align*}
    \partial_{\tilde\alpha}\tilde c^\mu &= \partial_{\tilde\alpha} \tilde\alpha_1^\mu \widetilde{g^{-p(c)}(\alpha_2)}^\mu\tilde\alpha_3\cdots \tilde\alpha_l  = \\
    &= \sum_{m\in M} \tilde\alpha_{m+1} \cdots \tilde\alpha_1^\mu \widetilde{g^{-p(c)}(\alpha_2)}^\mu\tilde\alpha_3 \cdots \tilde\alpha_{m-1} =\\
    &= \sum_{m\in M} (\alpha_{m+1} \otimes g^{t_{m+1}}) \cdots (\alpha_1 \otimes e_\mu) (g^{-t_2}(\alpha_2) \otimes 1) \cdots (\alpha_{m-1} \otimes g^{t_{m-1}}).
  \end{align*}
  Now, recalling that $\sum_{\mu=0}^{n-1} e_\mu = 1$ and $\zeta^{-t_2\mu}e_\mu = g^{t_2}e_\mu$, we get
  \begin{align*}
     \sum_{\mu = 0}^{n-1} \zeta^{-t_2\mu} \partial_{\tilde\alpha} \tilde c^{\mu}  
    &= \sum_{m\in M} \sum_{\mu = 0}^{n-1} (\alpha_{m+1} \otimes g^{t_{m+1}}) \cdots (\alpha_1 \otimes \zeta^{-t_2\mu} e_\mu) (g^{-t_2}(\alpha_2) \otimes 1) \cdots (\alpha_{m-1} \otimes g^{t_{m-1}})= \\
    &= \sum_{m\in M} (\alpha_{m+1} \otimes g^{t_{m+1}}) \cdots (\alpha_1 \otimes g^{t_2}) (g^{-t_2}(\alpha_2) \otimes 1) \cdots (\alpha_{m-1} \otimes g^{t_{m-1}}) =\\
    &= \sum_{m\in M} (\alpha_{m+1} \otimes g^{t_{m+1}}) \cdots (\alpha_1 \otimes g^{t_1}) (\alpha_2 \otimes g^{t_2}) \cdots (\alpha_{m-1} \otimes g^{t_{m-1}}) =\\
    &= \sum_{m\in M} \alpha_{m+1} g^{t_{m+1}}(\alpha_{m+2}) \cdots g^{t_{m+1,m-2}}(\alpha_{m-1}) \otimes g^{-t_m}.
  \end{align*}
  The rest of the proof of part (b1) is analogous to that of part (a1).

\item We have that
  $$\partial_\alpha W\otimes 1 = \sum_{c \text{ of type }({\rm ii})} a(c) \partial_\alpha c \otimes 1 + \sum_{c \text{ of type }({\rm iii})} a(c) \partial_\alpha c \otimes 1$$
  and
  \begin{align*}
  \sum_{\mu=0}^{n-1} \partial_{\tilde\alpha^\mu} W_G &= \sum_{\mu=0}^{n-1} \sum_{c \in \mathcal C({\rm ii})} a(c) \sum_{\nu = 0}^{n-1} \zeta^{-p(c)\nu} \partial_{\tilde\alpha^\mu}\tilde c^\nu +  \sum_{\mu=0}^{n-1} \sum_{c \in \mathcal C({\rm iii})} a(c) \sum_{\nu = 0}^{n-1}\partial_{\tilde\alpha^\mu} \tilde c^{\nu}.
  \end{align*}
  The statement will be proved using the following two claims:
  
   \textbf{ Claim (a2).} If $c\in \mathcal C({\rm ii})$, then $\partial_{\tilde\alpha^\mu} \tilde c^\nu = 0$ for $\mu \neq \nu$ and
$$\sum_{r=0}^{n-1} \partial_\alpha g^rc \otimes 1 = \sum_{\mu = 0}^{n-1}\zeta^{-p(c)\mu}\partial_{\tilde\alpha^\mu}\tilde c^\mu .$$

    \textbf{Claim (b2).} If $c\in \mathcal C({\rm iii})$, then $\partial_{\tilde\alpha^\mu} \tilde c^\nu = 0$ for $\mu \neq \nu$ and
$$ \partial_\alpha c \otimes 1 = \sum_{\mu = 0}^{n-1}\partial_{\tilde\alpha^\mu}\tilde c^\mu .$$
  
Assuming these claims hold, let us prove the statement. 
First notice that if $c\in \mathcal C({\rm iii})$ and $\alpha\in h*c$, then $h = 1$.
We have
\begin{align*}
  \sum_{c \text{ of type }({\rm ii})} a(c) \partial_\alpha c \otimes 1  &= \sum_{c\in \mathcal C({\rm ii})}\sum_{r = 0}^{|Gc|-1}a(g^r* c)\partial_\alpha(g^r*c)\otimes 1 = \\
  &=  \sum_{c \in \mathcal C({\rm ii})} 	\sum_{r = 0}^{n-1}a(c)\partial_\alpha g^rc\otimes 1 = \\
  &=  \sum_{c \in \mathcal C({\rm ii})} a(c)\sum_{\mu = 0}^{n-1}\zeta^{-p(c)\mu}\partial_{\tilde\alpha^\mu}\tilde c^\mu=\\
  &= \sum_{\mu = 0}^{n-1} \sum_{c \in \mathcal C({\rm ii})} a(c)\sum_{\nu = 0}^{n-1}\zeta^{-p(c)\nu}\partial_{\tilde\alpha^\mu}\tilde c^\nu
\end{align*}
and 
\begin{align*}
  \sum_{c \text{ of type }({\rm iii})} a(c) \partial_\alpha c \otimes 1 &= \sum_{c\in \mathcal C({\rm iii})}\sum_{r = 0}^{|Gc|-1}a(g^r* c)\partial_\alpha(g^r*c)\otimes 1 = \\
  &= \sum_{c \in \mathcal C({\rm iii})} a(c)\partial_\alpha c\otimes 1 = \\
  &= \sum_{c \in \mathcal C({\rm iii})} a(c)\sum_{\mu = 0}^{n-1}\partial_{\tilde\alpha^\mu}\tilde c^\mu = \\
  &= \sum_{\mu=0}^{n-1} \sum_{c \in \mathcal C({\rm iii})} a(c) \sum_{\nu = 0}^{n-1}\partial_{\tilde\alpha^\mu} \tilde c^{\nu}
\end{align*}
which together imply that
\begin{align*}
  \partial_\alpha W\otimes 1 = \sum_{\mu=0}^{n-1} \partial_{\tilde\alpha^\mu} W_G.
\end{align*}
It remains to prove the claims (a2) and (b2).

\textbf{Proof of (a2).}
      Since $c\in \mathcal C({\rm ii})$, we can write it as 
$$
    \xymatrix{
      c  &:& \varepsilon_0 = \varepsilon_l \ar[rr]^-{g^{t_{1,l-1}}(\alpha_l)} & &
      g^{t_{1,l-1}}(\varepsilon_{l-1})\ar[r] & \cdots \ar[r] & g^{t_2}(\varepsilon_2)\ar[r]^-{\alpha_2} &
      \varepsilon_1
      \ar[r]^{\alpha_1} &\varepsilon_0.
    }
    $$
    
    If $\alpha\not\in g^rc$ for all $r$ then the statement is trivially true. Otherwise, suppose $\alpha\in g^rc$ for some $r$. Then, since $\alpha$ is of type (2), we necessarily have that $r=-t_2$ and $\alpha = g^{-t_2}(\alpha_2)$ is the only copy of $\alpha$ in $g^{-t_2}c$.
    Hence $$\tilde c^\nu = \tilde{\alpha}_1^\nu \widetilde{g^{-t_2}(\alpha_2)}^\nu\tilde{\alpha}_3\cdots \tilde{\alpha}_l = \tilde{\alpha}_1^\nu \tilde\alpha^\nu\tilde{\alpha}_3\cdots \tilde{\alpha}_l$$ and $\partial_{\tilde\alpha^\mu} \tilde c^\nu = 0$ for $\mu \neq \nu$. We have
    \begin{align*}
      \partial_{\tilde \alpha^{\mu}}\tilde c^\mu &= 
	\partial_{\tilde \alpha^{\mu}} \tilde \alpha^\mu \tilde \alpha_3 \cdots \tilde \alpha_l \tilde \alpha_1^\mu = \\
		&= \tilde \alpha_3 \cdots \tilde \alpha_l \tilde \alpha_1^\mu = \\
		&= (\alpha_3\otimes g^{t_3})\cdots (\alpha_l\otimes g^{t_l})(\alpha_1\otimes e_{\mu})
    \end{align*}
    so that (recall that $t_{2,l} = 0 \pmod n$)
  \begin{align*}
    \sum_{\mu= 0}^{n-1}\zeta^{-t_2\mu} \partial_{\tilde \alpha^{\mu}}\tilde c^\mu &= \sum_{\mu= 0}^{n-1}(\alpha_3\otimes g^{t_3})\cdots (\alpha_l\otimes g^{t_l})(\alpha_1\otimes g^{t_2}e_{\mu}) = \\
    &= \sum_{\mu= 0}^{n-1}(\alpha_3 g^{t_3}(\alpha_4)\cdots g^{t_{3,l-1}}(\alpha_l)\otimes g^{-t_2})(\alpha_1\otimes e_{\mu})(1\otimes g^{t_2}) = \\
    &= \alpha_3 g^{t_3}(\alpha_4)\cdots g^{t_{3,l-1}}(\alpha_l)g^{t_{3,l}}(\alpha_1)\otimes 1 = \\
    &= \partial_\alpha g^{-t_2} c\otimes 1 = \\
    &= \sum_{r=0}^{n-1} \partial_\alpha g^rc \otimes 1,
  \end{align*}
  which proves the claim.

    \textbf{Proof of (b2).}
      We have, since $c\in \mathcal C({\rm iii})$,  
       $$
    \xymatrix{
       c  &:& \varepsilon_0 = \varepsilon_l \ar[r]^-{\alpha_l}  &
      \varepsilon_{l-1}\ar[r] & \cdots \ar[r] &\varepsilon_1
      \ar[r]^{\alpha_1} &\varepsilon_0,
    }
  $$
  with $\alpha = \alpha_1$, and observe that this is the only instance of $\alpha$ in $c$. Setting $b_i=b(\alpha_i)+\dots+b(\alpha_l)$ for $i\geq 3$, we have $\tilde c^\nu = \tilde{\alpha}^\nu \tilde{\alpha}_2^{\nu-b_3} \tilde{\alpha}_3^{\nu-b_4} \cdots \tilde{\alpha}_l^\nu$, so $\partial_{\tilde\alpha^\mu} \tilde c^\nu = 0$ for $\mu \neq \nu$.
	We can compute
	\begin{align*}
	  \partial_{\tilde \alpha^{\mu}}\tilde c^\mu &=  \partial_{\tilde \alpha^{\mu}}\tilde{\alpha}^\mu \tilde{\alpha}_2^{\mu-b_3} \tilde{\alpha}_3^{\mu-b_4} \cdots \tilde{\alpha}_l^\mu = \\
	  &= \tilde{\alpha}_2^{\mu-b_3} \tilde{\alpha}_3^{\mu-b_4} \cdots \tilde{\alpha}_l^\mu  = \\
	  &= (\alpha_2\otimes e_{\mu-b_3} )(\alpha_3\otimes e_{\mu-b_4} )\cdots(\alpha_l\otimes e_\mu ) = \\
	&= \alpha_2\alpha_3\cdots \alpha_l\otimes e_\mu = \\
	&= \partial_\alpha  c\otimes e_\mu
	\end{align*}
	so that 
	\begin{align*}
 \sum_{\mu= 0}^{n-1}\partial_{\tilde \alpha^{\mu}}\tilde c^\mu = \partial_\alpha c\otimes 1
 \end{align*}
 as claimed.

\item We have that
  $$\partial_\alpha W\otimes 1 = \sum_{c \text{ of type }({\rm ii})} a(c) \partial_\alpha c \otimes 1 + \sum_{c \text{ of type }({\rm iii})} a(c) \partial_\alpha c \otimes 1$$
  and
  \begin{align*}
  \sum_{\mu=0}^{n-1} \partial_{\tilde\alpha^\mu} W_G &=  \sum_{\mu=0}^{n-1} \sum_{c \in\mathcal C({\rm ii})} a(c) \sum_{\nu = 0}^{n-1} \zeta^{-p(c)\nu} \partial_{\tilde\alpha^\mu}\tilde c^\nu +\sum_{\mu=0}^{n-1} \sum_{c \in\mathcal C({\rm iii})} a(c) \sum_{\nu = 0}^{n-1}\partial_{\tilde\alpha^\mu} \tilde c^{\nu}.
  \end{align*}
  The statement will be proved using the following two claims:

    \textbf{Claim (a3).} If $c\in \mathcal C({\rm ii})$, then $\partial_{\tilde\alpha^\mu} \tilde c^\nu = 0$ for $\mu \neq \nu$ and
$$\partial_\alpha c \otimes 1 = \sum_{\mu = 0}^{n-1}\zeta^{-p(c)\mu}\partial_{\tilde\alpha^\mu}\tilde c^\mu .$$
    
    \textbf{Claim (b3).} If $c\in\mathcal C({\rm iii})$, then $\partial_{\tilde\alpha^\mu} \tilde c^\nu = 0$ for $\mu \neq \nu-q(c)$ and
$$\partial_\alpha c \otimes 1 = \sum_{\mu=0}^{n-1} \partial_{\tilde\alpha^\mu} \tilde c^{\mu+q(c)} .$$

Assuming these claims hold, let us prove the statement. 
First notice that if $c\in \mathcal C({\rm ii}) \cup \mathcal C({\rm iii})$ and $\alpha\in h*c$, then $h = 1$.
We have
\begin{align*}
  \sum_{c \text{ of type }({\rm ii})} a(c) \partial_\alpha c \otimes 1 &= \sum_{c\in \mathcal C({\rm ii})}\sum_{r = 0}^{|Gc|-1}a(g^r* c)\partial_\alpha(g^r*c)\otimes 1 = \\
    &= \sum_{c \in \mathcal C({\rm ii})} a(c)\partial_\alpha c\otimes 1 = \\
  &=  \sum_{c \in \mathcal C({\rm ii})} a(c)\sum_{\mu = 0}^{n-1}\zeta^{-p(c)\mu}\partial_{\tilde\alpha^\mu}\tilde c^\mu=\\
  &= \sum_{\mu = 0}^{n-1} \sum_{c \in \mathcal C({\rm ii})} a(c)\sum_{\nu = 0}^{n-1}\zeta^{-p(c)\nu}\partial_{\tilde\alpha^\mu}\tilde c^\nu
\end{align*}
and
\begin{align*}
   \sum_{c \text{ of type }({\rm iii})} a(c) \partial_\alpha c \otimes 1 &=\sum_{c\in \mathcal C({\rm iii})}\sum_{r = 0}^{|Gc|-1}a(g^r* c)\partial_\alpha(g^r*c)\otimes 1 = \\
  &= \sum_{c \in \mathcal C({\rm iii})} a(c)\partial_\alpha c\otimes 1 = \\
  &= \sum_{c \in \mathcal C({\rm iii})} a(c)\sum_{\mu = 0}^{n-1}\partial_{\tilde\alpha^\mu}\tilde c^{\mu+q(c)} = \\
  &= \sum_{\mu=0}^{n-1} \sum_{c \in \mathcal C({\rm iii})} a(c) \sum_{\nu = 0}^{n-1}\partial_{\tilde\alpha^\mu} \tilde c^{\nu}
\end{align*}
which together imply that 
\begin{align*}
  \partial_\alpha W\otimes 1 =\sum_{\mu=0}^{n-1} \partial_{\tilde\alpha^\mu} W_G.
\end{align*}
It remains to prove the claims (a3) and (b3).

    \textbf{Proof of (a3).}
      We have, for $c\in\mathcal C({\rm ii})$,
      $$
    \xymatrix{
      c  &:& \varepsilon_0 = \varepsilon_l \ar[rr]^-{g^{t_{1,l-1}}(\alpha_l)} & &
      g^{t_{1,l-1}}(\varepsilon_{l-1})\ar[r] & \cdots \ar[r] & g^{t_2}(\varepsilon_2)\ar[r]^-{\alpha_2} &
      \varepsilon_1
      \ar[r]^{\alpha_1} &\varepsilon_0,
    }
    $$
    where $\alpha = \alpha_1$, and this is the only copy of $\alpha_1$ in $c$.
    Hence $\tilde c^\nu = \tilde{\alpha}^\nu \widetilde{g^{-t_2}(\alpha_2)}^\nu\tilde{\alpha}_3\cdots \tilde{\alpha}_l$ and $\partial_{\tilde\alpha^\mu} \tilde c^\nu = 0$ for $\mu \neq \nu$.
    Then
\begin{align*}
      \partial_{\tilde \alpha^{\mu}}\tilde c^\mu &= 
      \partial_{\tilde \alpha^{\mu}} \tilde \alpha^\mu \widetilde{g^{-t_2}(\alpha_2)} \tilde \alpha_3 \cdots \tilde \alpha_l = \\
		&= \widetilde{g^{-t_2}(\alpha_2)} \tilde \alpha_3 \cdots \tilde \alpha_l  = \\
		&= (1\otimes e_{\mu})(g^{-t_2}(\alpha_2)\otimes 1)(\alpha_3\otimes g^{t_3})\cdots (\alpha_l\otimes g^{t_l})
	      \end{align*}
and so
\begin{align*}
  \sum_{\mu= 0}^{n-1}\zeta^{-t_2\mu} \partial_{\tilde \alpha^{\mu}}\tilde c^\mu &= \sum_{\mu= 0}^{n-1}(1\otimes e_{\mu})(1\otimes g^{t_2})(g^{-t_2}(\alpha_2)\otimes 1)(\alpha_3\otimes g^{t_3})\cdots (\alpha_l\otimes g^{t_l})=\\
  &= (\alpha_2\otimes g^{t_2})(\alpha_3\otimes g^{t_3})\cdots (\alpha_l\otimes g^{t_l}) = \\
  &= \partial_\alpha c\otimes 1
\end{align*}
as claimed.

    \textbf{Proof of (b3).}
 We have
       $$
    \xymatrix{
       c  &:& \varepsilon_0 = \varepsilon_l \ar[r]^-{\alpha_l}  &
      \varepsilon_{l-1}\ar[r] & \cdots \ar[r] &\varepsilon_1
      \ar[r]^{\alpha_1} &\varepsilon_0,
    }
  $$
  with $\alpha = \alpha_2$, and again observe that this is the only instance of $\alpha$ in $ c$.
  Write $b_i=b(\alpha_i)+\dots+b(\alpha_l)$ for $i\geq 3$, and recall that $b_3=q(c)$.
  Then $\tilde c^\nu = \tilde{\alpha}_1^\nu \tilde{\alpha}^{\nu-b_3} \tilde{\alpha}_3^{\nu-b_4} \cdots \tilde{\alpha}_l^\nu$, so $\partial_{\tilde\alpha^\mu} \tilde c^\nu = 0$ for $\mu \neq \nu-q(c)$. Hence
  \begin{align*}
\partial_{\tilde \alpha^{\mu}}\tilde c^{\mu+q(c)}
     &=  \partial_{\tilde \alpha^{\mu}} \tilde{\alpha}_1^{\mu+b_3} \tilde{\alpha}^{\mu} \tilde{\alpha}_3^{\mu+b_3-b_4} \cdots \tilde{\alpha}_l^{\mu+b_3} = \\
 &= \tilde{\alpha}_1^{\mu+b_3} \tilde{\alpha}^{\mu} \tilde{\alpha}_3^{\mu+b_3-b_4} \cdots \tilde{\alpha}_l^{\mu+b_3} \tilde{\alpha}_1^{\mu+b_3} = \\
 &= (\alpha_3\otimes e_{\mu+b_3-b_4})\cdots (\alpha_l\otimes e_{\mu+b_3})(1\otimes e_{\mu+b_3})(\alpha_1\otimes 1) = \\
 &= (\alpha_3\cdots\alpha_l\otimes e_{\mu+b_3})(1\otimes e_{\mu+b_3})(\alpha_1\otimes 1) = \\
 &= (\alpha_3\cdots\alpha_l\otimes e_{\mu+b_3})(\alpha_1\otimes 1)
  \end{align*}
so 
\begin{align*}
  \sum_{\mu= 0}^{n-1} \partial_{\tilde \alpha^{\mu}}\tilde c^{\mu+q(c)} = \alpha_3\cdots \alpha_l\alpha_1\otimes 1 = \partial_\alpha c\otimes 1
\end{align*}
which concludes the proof.

\item  We have that
  $$\partial_\alpha W\otimes 1 = \sum_{c \text{ of type }({\rm iii})} a(c) \partial_\alpha c \otimes 1 + \sum_{c \text{ of type }({\rm iv})} a(c) \partial_\alpha c \otimes 1$$
  and
  $$n\sum_{\mu = 0}^{n-1}\partial_{\tilde\alpha^\mu} W_G = n\sum_{\mu = 0}^{n-1}\sum_{c \in \mathcal C({\rm iii})} a(c) \sum_{\nu=0}^{n-1} \partial_{\tilde\alpha^\mu}\tilde c^\nu + n\sum_{\mu = 0}^{n-1}\sum_{c \in\mathcal C({\rm iv})} a(c) \sum_{\nu=0}^{n-1} \partial_{\tilde\alpha^\mu}\tilde c^\nu.$$
  The statement will be proved using the following two claims:
  
    \textbf{Claim (a4).} If $c\in\mathcal C({\rm iii})$, then  
 $$\sum_{r=0}^{n-1} \partial_\alpha g^rc \otimes 1 = \sum_{\mu=0}^{n-1} \sum_{\nu = 0}^{n-1}\partial_{\tilde\alpha^\mu} \tilde c^{\nu} .$$
 
   \textbf{Claim (b4).} If $c\in\mathcal C({\rm iv})$, then 
      $$ \partial_\alpha c \otimes 1 = \sum_{\mu=0}^{n-1} \sum_{\nu = 0}^{n-1}\partial_{\tilde\alpha^\mu} \tilde c^{\nu} .$$
   
Assuming these claims hold, let us prove the statement. 
We have that
\begin{align*}
  \sum_{c \text{ of type }({\rm iii})} a(c) \partial_\alpha c \otimes 1 &= \sum_{c \in \mathcal C({\rm iii})} \sum_{r=0}^{n-1} a(g^r*c)\partial_{\alpha}(g^r*c)\otimes 1 = \\
  &=  \sum_{c \in \mathcal C({\rm iii})} \sum_{r=0}^{n-1}\zeta^{rq(c)}a(c)\partial_{\alpha}(g^r*c)\otimes 1 	= \\
  &=  \sum_{c \in \mathcal C({\rm iii})} \sum_{r=0}^{n-1}a(c)\partial_{\alpha}g^r c\otimes 1 = \\
  &=  n\sum_{\mu = 0}^{n-1}\sum_{c \in \mathcal C({\rm iii})} a(c) \sum_{\nu=0}^{n-1} \partial_{\tilde\alpha^\mu}\tilde c^\nu
\end{align*}
and
\begin{align*}
  \sum_{c \text{ of type }({\rm iv})} a(c) \partial_\alpha c \otimes 1 &= \sum_{c \in \mathcal C({\rm iv})} \sum_{r=0}^{n-1} a(g^r*c)\partial_{\alpha}(g^r*c)\otimes 1 = \\
 &= \sum_{c \in \mathcal C({\rm iv})} \sum_{r=0}^{n-1} a(c) \partial_{\alpha}c\otimes 1 = \\
 &= n\sum_{c \in \mathcal C({\rm iv})}a(c) \partial_{\alpha}c\otimes 1 = \\
 &=  n\sum_{\mu = 0}^{n-1}\sum_{c \in\mathcal C({\rm iv})} a(c) \sum_{\nu=0}^{n-1} \partial_{\tilde\alpha^\mu}\tilde c^\nu
\end{align*}
which together imply that
\begin{align*}
  \partial_\alpha W\otimes 1 =n\sum_{\mu = 0}^{n-1}\partial_{\tilde\alpha^\mu} W_G.
\end{align*}
It remains to prove the claims (a4) and (b4).
  
     \textbf{Proof of (a4).}
   Let us write, for $c\in\mathcal C({\rm iii})$,
  $$
    \xymatrix{
      c  &:& \varepsilon_0 = \varepsilon_l \ar[r]^-{\alpha_l}  &
      \varepsilon_{l-1}\ar[r] & \cdots \ar[r] &\varepsilon_1
      \ar[r]^{\alpha_1} &\varepsilon_0,
    }
  $$
  where $\varepsilon_1\in\cal E'$ and $\varepsilon_i\in\cal E''$ for $i\neq1$.
  
  Let $M = \{m\in\{1,\dots,l\} \,|\, \alpha=\alpha_m\}$ and put $b_i=b(\alpha_i)+\dots+b(\alpha_l)$ for all $i\geq3$.
  We have
  $$
    \xymatrix{
      \tilde c^\nu  &:& \eta_\nu^{\varepsilon_l} \ar[rr]^-{\tilde\alpha_l^\nu} & &
      \eta_{\nu-b_l}^{\varepsilon_{l-1}}\ar[rr]^-{\tilde\alpha_{l-1}^{\nu-b_l}} & & \cdots \ar[rr]^-{\tilde\alpha_2^{\nu-b_3}} & &\eta^{\varepsilon_1}
      \ar[r]^{\tilde\alpha_1^{\nu}} &\eta_\nu^{\varepsilon_0},
    }
  $$
  so we may note that, if $m\in M$, the $m$-th arrow of $\tilde c^\nu$ is $\tilde\alpha_m^{\nu-b_{m+1}}$, and it coincides with $\tilde\alpha^\mu$ if and only if $\nu=\mu+b_{m+1}$. Hence
  \begin{align*}
    \sum_{\mu=0}^{n-1} \sum_{\nu=0}^{n-1} \partial_{\tilde\alpha^\mu}\tilde c^\nu &= \sum_{\mu=0}^{n-1} \sum_{\nu=0}^{n-1} \partial_{\tilde\alpha^\mu} \tilde{\alpha}_1^\nu \tilde{\alpha}_2^{\nu-b_3} \tilde{\alpha}_3^{\nu-b_4} \cdots \tilde{\alpha}_{l-1}^{\nu-b_l} \tilde{\alpha}_l^\nu =\\
    &= \sum_{\mu=0}^{n-1} \sum_{m\in M} \tilde\alpha_{m+1}^{\mu+b_{m+1}-b_{m+2}} \cdots \tilde\alpha_l^{\mu+b_{m+1}} \tilde\alpha_1^{\mu+b_{m+1}} \tilde\alpha_2^{\mu+b_{m+1}-b_3} \cdots \tilde\alpha_{m-1}^{\mu+b_{m+1}-b_m} =\\
    &= \sum_{\mu=0}^{n-1} \sum_{m\in M} (\alpha_{m+1}\otimes e_{\mu+b_{m+1}-b_{m+2}}) \cdots (\alpha_l\otimes e_{\mu+b_{m+1}}) \\
    & (1\otimes e_{\mu+b_{m+1}})(\alpha_1\otimes 1)(\alpha_2\otimes e_{\mu+b_{m+1}-b_3}) \cdots (\alpha_{m-1}\otimes e_{\mu+b_{m+1}-b_m}) =\\
    &= \sum_{\mu=0}^{n-1} \sum_{m\in M} (\alpha_{m+1} \cdots \alpha_l\otimes e_{\mu+b_{m+1}})(\alpha_1\alpha_2 \cdots \alpha_{m-1}\otimes e_{\mu+b_{m+1}-b_m}) =\\
    &= \sum_{\mu=0}^{n-1} \sum_{m\in M} \frac{1}{n} \sum_{i=0}^{n-1} \zeta^{i(\mu+b_{m+1})} (\alpha_{m+1} \cdots \alpha_l\otimes g^i)(\alpha_1\alpha_2 \cdots \alpha_{m-1}\otimes e_{\mu+b_{m+1}-b_m}) =\\
    &= \sum_{\mu=0}^{n-1} \sum_{m\in M} \frac{1}{n} \sum_{i=0}^{n-1} \zeta^{i(\mu+b_{m+1})} \alpha_{m+1} \cdots \alpha_l g^i(\alpha_1\alpha_2 \cdots \alpha_{m-1}) \otimes g^i e_{\mu+b_{m+1}-b_m} =\\
    &= \sum_{\mu=0}^{n-1} \sum_{m\in M} \frac{1}{n} \sum_{i=0}^{n-1} \zeta^{ib_m} \alpha_{m+1} \cdots \alpha_l g^i(\alpha_1\alpha_2 \cdots \alpha_{m-1}) \otimes e_{\mu+b_{m+1}-b_m} =\\
    &= \sum_{\mu=0}^{n-1} \sum_{m\in M} \frac{1}{n} \sum_{i=0}^{n-1} \zeta^{ib_3} \alpha_{m+1} \cdots \alpha_l g^i(\alpha_1\alpha_2) \cdots \alpha_{m-1} \otimes e_{\mu+b_{m+1}-b_m} =\\
    &= \sum_{\mu=0}^{n-1} \sum_{m\in M} \frac{1}{n} \sum_{i=0}^{n-1} \partial_\alpha g^i c \otimes e_{\mu+b_{m+1}-b_m} =\\
    &= \sum_{m\in M} \frac{1}{n} \sum_{i=0}^{n-1} \partial_\alpha g^i c \otimes 1 =\\
    &= \frac{1}{n} \sum_{i=0}^{n-1} \partial_\alpha g^i c \otimes 1
  \end{align*}
  which is what we wanted to prove.
  
\textbf{Proof of (b4).} Let
  $$
    \xymatrix{
      c  &:& \varepsilon_0 = \varepsilon_l \ar[r]^-{\alpha_l}  &
      \varepsilon_{l-1}\ar[r] & \cdots \ar[r] &\varepsilon_1
      \ar[r]^{\alpha_1} &\varepsilon_0,
    }
  $$
   where $\varepsilon_i\in\cal E''$ for all $i=1,\dots,l$. Let $M = \{m\in\{1,\dots,l\} \,|\, \alpha=\alpha_m\}$, and put as usual \mbox{$b_i=b(\alpha_i)+\dots+b(\alpha_l)$} for all $i$.
  We have
  $$
    \xymatrix{
      \tilde c^\nu  &:& \eta_\nu^{\varepsilon_l} \ar[rr]^-{\tilde\alpha_l^\nu} & &
      \eta_{\nu-b_l}^{\varepsilon_{l-1}}\ar[rr]^-{\tilde\alpha_{l-1}^{\nu-b_l}} & & \cdots \ar[rr]^-{\tilde\alpha_2^{\nu-b_3}} & &\eta_{\nu-b_2}^{\varepsilon_1}
      \ar[r]^{\tilde\alpha_1^{\nu-b_2}} &\eta_\nu^{\varepsilon_0},
    }
  $$
  hence
  \begin{align*}
    \sum_{\mu=0}^{n-1} \sum_{\nu=0}^{n-1} \partial_{\tilde\alpha^\mu}\tilde c^\nu &= \sum_{\mu=0}^{n-1} \sum_{\nu=0}^{n-1} \partial_{\tilde\alpha^\mu} \tilde{\alpha}_1^{\nu-b_2} \tilde{\alpha}_2^{\nu-b_3} \cdots \tilde{\alpha}_{l-1}^{\nu-b_l} \tilde{\alpha}_l^\nu =\\
    &= \sum_{\mu=0}^{n-1} \sum_{m\in M} \tilde\alpha_{m+1}^{\mu+b_{m+1}-b_{m+2}} \cdots \tilde\alpha_{m-1}^{\mu+b_{m+1}-b_m} =\\
    &= \sum_{\mu=0}^{n-1} \sum_{m\in M} (\alpha_{m+1}\otimes e_{\mu+b_{m+1}-b_{m+2}}) \cdots (\alpha_{m-1}\otimes e_{\mu+b_{m+1}-b_m}) =\\
    &= \sum_{\mu=0}^{n-1} \sum_{m\in M} \alpha_{m+1} \cdots \alpha_{m-1}\otimes e_{\mu+b_{m+1}-b_m} =\\
    &= \sum_{m\in M} \alpha_{m+1} \cdots \alpha_{m-1}\otimes 1 =\\
    &= \partial_\alpha c \otimes 1,
  \end{align*}
  and the claim is proved.\qedhere
\end{enumerate}
\end{proof}

\subsection{Isomorphism of algebras}\label{sec:proof}

We are now ready to prove our main result.

\begin{proof}[Proof of Theorem \ref{thm:main}]
  We will first prove that 
  \begin{align*} 
    \frac{\kk Q_G}{\langle \partial_{\gamma}W_G\ |\ \gamma \in (Q_G)_1\rangle}\cong \eta(\Lambda G)\eta.
  \end{align*}
  By Lemma \ref{lem:key}, the right-hand side is isomorphic to 
  \begin{align*}
    \frac{\eta ( (\kk Q)G)\eta}{\langle \partial_{g^{-t(\alpha)}\alpha} W\otimes g^{-t(\alpha)} \ |\ \alpha \text{ 
    of type } (1), (2), (3), (4)\rangle},
  \end{align*}
  and by \cite[\S2.2,\S2.3]{RR85} we have that $\kk Q_G \cong \eta((\kk Q)G)\eta$ via the isomorphism $J$ of  \S\ref{sec:quiver}. For every arrow $\alpha$ of $Q$ of type (1),(2),(3),(4), we can write
  (recall that for types (2),(3),(4) we set $t(\alpha ) = 0$)
  $$J^{-1}\left(\partial_{g^{-t(\alpha)}\alpha} W\otimes g^{-t(\alpha)}\right) = \sum_{i,j\in (Q_G)_0} x_{ij}$$ such that $x_{ij}$ are linear combinations of paths from $i$ to $j$ in $\kk Q_G$.
  By Lemma \ref{lem:1234}, every nonzero $x_{ij}$ is associated in $\kk Q_G$ to a unique element of the form $\partial_\gamma W_G$ for some $\gamma\in (Q_G)_1$, and moreover every nonzero $\partial_\gamma W_G$ appears 
  in this way for some $\alpha$.
  This means that $$
  J\left( \langle \partial_{\gamma}W_G\ |\ \gamma \in (Q_G)_1\rangle \right) = \langle \partial_{g^{-t(\alpha)}\alpha} W\otimes g^{-t(\alpha)} \ |\ \alpha \text{ 
    of type } (1), (2), (3), (4)\rangle
    $$
    so the claim is proved.
    Now notice that by Lemma \ref{lem:admissible}, the ideal $\langle \partial_{\gamma}W_G\ |\ \gamma \in (Q_G)_1\rangle\subseteq \kk Q_G$ is admissible, so by Proposition \ref{prop:completion} we conclude
    that 
    \begin{align*}
      \mathcal P(Q_G, W_G) \cong \frac{\kk Q_G}{\langle \partial_{\gamma}W_G\ |\ \gamma \in (Q_G)_1\rangle}
    \end{align*}
    and we are done. 
\end{proof}

\section{Dual group action}\label{sec:dual}
It was proved in \cite{RR85} that we can always recover the algebra $\Lambda$ from $\Lambda G$ by applying another skew group algebra construction. In this section we will show that in our case this construction satisfies again the assumptions \ref{ass:field}-\ref{ass:cycles}, and the potential we obtain corresponds to the potential we started with.

Let $\Lambda$ be a finite dimensional algebra and $G$ be a finite abelian group acting on $\Lambda$ by automorphisms.
We denote by $\hat G$ the dual group of $G$. Its elements are the group homomorphism $\chi\colon G\to \kk^*$.

\begin{thm}[{\cite[Corollary 5.2]{RR85}}]\label{thm:RR}
  Define an action of $\hat G$ on $\Lambda G$ by $\chi(\lambda\otimes g)=\chi(g)\lambda\otimes g$, \mbox{$\lambda\in\Lambda$}, \mbox{$g\in G$}. Then the skew group algebra $(\Lambda G)\hat G$ is Morita equivalent to $\Lambda$.
\end{thm}

We want to apply Theorem \ref{thm:RR} to our setting, so we retain the notation of Section~\ref{sec:setup} (in particular we are assuming that
$\Lambda = \mathcal P(Q,W)$). Since $G$ is finite and cyclic, there is an isomorphism $G\cong\hat G$. We can write $\hat G = \{\chi_0,\dots,\chi_{n-1}\}$, where we define $\chi_\mu$ to be the homomorphism which sends $g$ to $\zeta^\mu$. Put $\chi=\chi_1$ and note that it is a generator of $\hat G$.

Recall that, by Theorem~\ref{thm:main}, we have an isomorphism $\cal P(Q_G,W_G)\cong\eta(\Lambda G)\eta$, where $\eta\in \Lambda G$ is an idempotent such that $\eta(\Lambda G)\eta$ is Morita equivalent to $\Lambda G$ and $(Q_G,W_G)$ is the QP described in \S\ref{sec:quiver}.

We will now show that the process of getting back $\Lambda$ from $\Lambda G$ is achieved via a construction which satisfies the assumptions \ref{ass:field}-\ref{ass:cycles}. 

\begin{prop}
  \label{prop:main2a}
  The dual group $\hat G$ acts on $\cal P(Q_G,W_G)$ by automorphisms and $\cal P(Q_G,W_G)\hat G$ is Morita equivalent to $\Lambda$. Moreover this action satisfies the assumptions \ref{ass:field}-\ref{ass:cycles}.
\end{prop}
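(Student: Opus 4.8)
The plan is to transport the Reiten--Riedtmann action of Theorem~\ref{thm:RR} along the Morita idempotent $\eta$ and the isomorphism of Theorem~\ref{thm:main}. The first thing I would check is that $\eta$ is fixed by $\hat G$: since $\eta=\e\otimes 1$ and the action is $\chi(\lambda\otimes g)=\chi(g)\lambda\otimes g$, we get $\chi(\e\otimes 1)=\chi(1)\,\e\otimes 1=\e\otimes 1=\eta$ for every $\chi\in\hat G$. Consequently $\hat G$ preserves the corner $\eta(\Lambda G)\eta$ and acts on it by automorphisms, and transporting this action through the isomorphism $\mathcal P(Q_G,W_G)\cong\eta(\Lambda G)\eta$ yields the desired action by automorphisms, settling the first assertion.

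For the Morita statement I would use the general fact that if $H$ is a finite group acting on an algebra $B$ and $e\in B$ is an $H$-invariant full idempotent, then $e\otimes 1$ is a full idempotent of $BH$ with $(e\otimes 1)(BH)(e\otimes 1)\cong(eBe)H$, so that $(eBe)H$ is Morita equivalent to $BH$. Applying this with $B=\Lambda G$, $H=\hat G$ and $e=\eta$ (which is full, since $\eta(\Lambda G)\eta$ is Morita equivalent to $\Lambda G$) gives that $(\eta(\Lambda G)\eta)\hat G$ is Morita equivalent to $(\Lambda G)\hat G$, and the latter is Morita equivalent to $\Lambda$ by Theorem~\ref{thm:RR}. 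Together with Theorem~\ref{thm:main} this shows $\mathcal P(Q_G,W_G)\hat G$ is Morita equivalent to $\Lambda$.

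The bulk of the work is verifying assumptions \ref{ass:field}--\ref{ass:cycles} for the $\hat G$-action on $(Q_G,W_G)$. First I would record the explicit action on $\kk Q_G\cong\eta((\kk Q)G)\eta$: a short computation with $e_\mu=\frac{1}{n}\sum_i\zeta^{i\mu}g^i$ shows that $\chi=\chi_1$ fixes each vertex $\eta^\varepsilon$ ($\varepsilon\in\mathcal E'$) and sends $\eta^\varepsilon_\mu\mapsto\eta^\varepsilon_{\mu+1}$ ($\varepsilon\in\mathcal E''$), while on arrows it sends $\tilde\alpha\mapsto\zeta^{t(\alpha)}\tilde\alpha$ for $\alpha$ of type (1) and $\tilde\alpha^\mu\mapsto\tilde\alpha^{\mu+1}$ for $\alpha$ of type (2), (3), (4). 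From this, \ref{ass:field} is immediate (same field, $|\hat G|=n$); \ref{ass:permuted} holds since vertices are permuted and arrows go to scalar multiples of arrows; \ref{ass:cardinality} holds because the non-fixed vertices are exactly the $\eta^\varepsilon_\mu$, sitting in orbits $\{\eta^\varepsilon_0,\dots,\eta^\varepsilon_{n-1}\}$ of size $n$; and \ref{ass:fixed}, \ref{ass:arrows} hold because the only arrows between fixed vertices are those of type (1) (scaled by $\zeta^{t(\alpha)}$), whereas every arrow of type (2), (3), (4) has a non-fixed endpoint and is sent to an arrow. For \ref{ass:cycles} I would observe that the construction interchanges fixed and non-fixed vertices, so a cycle of $W$ of type (i), (ii), (iii), (iv) produces a cycle of $W_G$ of $\hat G$-type (iv), (iii), (ii), (i) respectively, and hence every cycle of $W_G$ is again of one of the four types.

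The main obstacle is assumption \ref{ass:potential}, the invariance $\hat G W_G=W_G$. Here I would apply $\chi$ to each cycle in the definition of $W_G$ (Notation~\ref{not:wg}) using the explicit action on arrows. The key input is that for a cycle $\hat c$ as in Notation~\ref{not:cmu} the total shift $t_1+\dots+t_l$ vanishes modulo $n$, because $\hat c$ closes up at a vertex whose $G$-orbit has cardinality $n$. Using this one computes $\chi(\tilde c)=\zeta^{\,t_1+\dots+t_l}\tilde c=\tilde c$ for $c$ of type (i); $\chi(\tilde c^\mu)=\zeta^{-p(c)}\tilde c^{\mu+1}$ for $c$ of type (ii), where the twist $\zeta^{-p(c)}$ comes from the type (1) arrows $\tilde\alpha_3,\dots,\tilde\alpha_l$ together with the identity $t_3+\dots+t_l\equiv -p(c)$; and $\chi(\tilde c^\mu)=\tilde c^{\mu+1}$ for $c$ of type (iii) and (iv). Substituting these into $W_G$ and reindexing the sums over $\mu$ — the weight $\zeta^{-p(c)\mu}$ in the type (ii) summand being precisely what absorbs the extra factor $\zeta^{-p(c)}$ — shows that each summand of $W_G$ is fixed by $\chi$, hence by all of $\hat G$. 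Correctly tracking the interplay between these root-of-unity twists and the index shift $\mu\mapsto\mu+1$ is the delicate point of the whole argument.
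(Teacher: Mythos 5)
Your proposal is correct and follows essentially the same route as the paper: restrict the Reiten--Riedtmann $\hat G$-action to the corner $\eta(\Lambda G)\eta$ via the $\hat G$-invariant idempotent $\eta=\e\otimes 1$, chain the Morita equivalences $(\eta(\Lambda G)\eta)\hat G \sim (\Lambda G)\hat G \sim \Lambda$, compute the explicit action on vertices and on the four types of arrows, and verify \ref{ass:potential} by the same cycle computations, with the weight $\zeta^{-p(c)\mu}$ absorbing the twist $\zeta^{-p(c)}$ under the reindexing $\mu\mapsto\mu+1$. The only cosmetic differences are that you prove the corner--skew-group compatibility $(e\otimes 1)(BH)(e\otimes 1)\cong(eBe)H$ where the paper simply cites \cite[Lemma~2.2]{RR85}, and your verification of \ref{ass:cycles} by observing that the types swap as (i)$\leftrightarrow$(iv), (ii)$\leftrightarrow$(iii) is stated a bit more explicitly than in the paper.
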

\begin{proof}
  Since $\eta=\e\otimes1$ for an idempotent $\e\in\Lambda$, we have that $\hat G$ acts trivially on $\eta$ and so the action of $\hat G$ on $\Lambda G$ restricts to an action on $\eta(\Lambda G)\eta\cong \cal P(Q_G,W_G)$.
  Hence, by \cite[Lemma~2.2]{RR85}, we have that $(\eta(\Lambda G)\eta)\hat G$ is Morita equivalent to $(\Lambda G)\hat G$, and the latter is Morita equivalent to $\Lambda$ by Theorem~\ref{thm:RR}.
  So the first assertion is proved and we are left to check that the action of $\hat G$ on $(Q_G,W_G)$ satisfies the assumptions \ref{ass:field}-\ref{ass:cycles}.
  
  Assumption \ref{ass:field} holds because $\hat G$ has the same order of $G$.

  If $\varepsilon\in \cal E'$, then $\chi(\eta^{\varepsilon})=\chi(\varepsilon\otimes1)=\eta^{\varepsilon}$. If $\varepsilon'\in \cal E''$ and $0\leq\mu\leq n-1$, then
  \begin{align*}
    \chi(\eta^{\varepsilon}_\mu) = \chi(\varepsilon\otimes e_\mu) = \frac{1}{n}\sum_{i=0}^{n-1} \zeta^{i\mu} \chi(\varepsilon\otimes g^i) = \frac{1}{n}\sum_{i=0}^{n-1} \zeta^{i(\mu+1)} \varepsilon\otimes g^i = \varepsilon\otimes e_{\mu+1} = \eta^{\varepsilon}_{\mu+1}.
  \end{align*}
  Hence $\hat G$ permutes the vertices of $Q_G$. In particular assumption \ref{ass:cardinality} holds.

  Now we consider the action on the arrows of $Q_G$. Four cases have to be analysed.
  \begin{enumerate}
    \item[(1)] Let $\alpha$ be an arrow of type (1) in $Q$. Then we have an arrow $\tilde\alpha = \alpha\otimes g^{t(\alpha)}$ in $Q_G$ and $\hat G$ acts on it as
    \begin{align*}
      \chi(\tilde\alpha) = \chi(\alpha\otimes g^{t(\alpha)}) = \chi(g^{t(\alpha)}) \alpha\otimes g^{t(\alpha)} = \zeta^{t(\alpha)} \alpha\otimes g^{t(\alpha)} = \zeta^{t(\alpha)} \tilde\alpha.
    \end{align*}
    \item[(2)] Let $\alpha$ be an arrow of type (2) in $Q$ and $0\leq\mu\leq n-1$. Then $\hat G$ acts on $\tilde\alpha^\mu = (1\otimes e_\mu)(\alpha\otimes1)$ as
    \begin{align*}
      \chi(\tilde\alpha^\mu) = \chi((1\otimes e_\mu)(\alpha\otimes1)) = (1\otimes e_{\mu+1})(\alpha\otimes1) = \tilde\alpha^{\mu+1}.
    \end{align*}
        \item[(3),(4)] Let $\alpha$ be an arrow of type either (3) or (4) in $Q$ and $0\leq\mu\leq n-1$. Then $\hat G$ acts on $\tilde\alpha^\mu = \alpha\otimes e_\mu$ as
    \begin{align*}
      \chi(\tilde\alpha^\mu) = \chi(\alpha\otimes e_\mu) = \alpha\otimes e_{\mu+1} = \tilde\alpha^{\mu+1}.
    \end{align*}
  \end{enumerate}
  This proves assumptions \ref{ass:permuted} and \ref{ass:fixed}.

  From these calculations we can deduce how $\hat G$ acts on the cycles of $W_G$. Again we distinguish four cases.
  \begin{enumerate}
    \item[(i)] Let $c$ be a cycle of type (i) and write $\tilde c=\tilde\alpha_1\cdots\tilde\alpha_l$. Then, observing that $t(\alpha_1)+\cdots+t(\alpha_l)=0\pmod n$,
      we get $\chi(\tilde c)=\zeta^{t(\alpha_1)+\cdots+t(\alpha_l)}\tilde c=\tilde c$.
    \item[(ii)] Let $c$ be a cycle of type (ii) and $0\leq\mu\leq n-1$. Write $\tilde c^\mu=\tilde\alpha_1^\mu\widetilde{g^{-p(c)}(\alpha_2)}^\mu\tilde\alpha_3\cdots \tilde\alpha_l$.
      Then we get $\chi(\tilde c^\mu)=\zeta^{t(\alpha_3)+\cdots+t(\alpha_l)}\tilde c^{\mu+1}=\zeta^{-t(\alpha_2)}\tilde c^{\mu+1}=\zeta^{-p(c)}\tilde c^{\mu+1}$, since $t(\alpha_1)+\cdots+t(\alpha_l)=0\pmod n$ and $t(\alpha_1)=0$.
    \item[(iii)] Let $c$ be a cycle of type (iii) and $0\leq\mu\leq n-1$. Write $\tilde c^\mu=\tilde\alpha_1^\mu\tilde\alpha_2^\mu\tilde\alpha_3^\mu\cdots \tilde\alpha_l^\mu$. Then we get $\chi(\tilde c^\mu)=\tilde c^{\mu+1}$.
    \item[(iv)] Let $c$ be a cycle of type (iv) and $0\leq\mu\leq n-1$. Write $\tilde c^\mu = \tilde{\alpha}_1^{\mu-b_2} \tilde{\alpha}_2^{\mu-b_3} \cdots \tilde{\alpha}_{l-1}^{\mu-b_l} \tilde{\alpha}_l^\mu$. Then we get $\chi(\tilde c^\mu)=\tilde c^{\mu+1}$.
  \end{enumerate}
  So assumption \ref{ass:cycles} is proved.
  
  Finally we get that
  \begin{align*}
  \chi (W_G) =& \sum_{c\in\cal C({\rm i})}a(c) \chi(\tilde c) +\sum_{c\in\cal C({\rm ii})}a(c)\sum_{\mu = 0}^{n-1}\zeta^{-p(c)\mu} \chi(\tilde c^{\mu})+ \\
  &+ \sum_{c\in\cal C({\rm iii})}a(c)\sum_{\mu = 0}^{n-1} \chi(\tilde c^{\mu}) + \sum_{c\in\cal C({\rm iv})}a(c)\sum_{\mu = 0}^{n-1} \chi(\tilde c^{\mu})= \\
  =& \sum_{c\in\cal C({\rm i})}a(c)\tilde c +\sum_{c\in\cal C({\rm ii})}a(c)\sum_{\mu = 0}^{n-1}\zeta^{-p(c)(\mu+1)}\tilde c^{\mu+1}+ \\
  &+ \sum_{c\in\cal C({\rm iii})}a(c)\sum_{\mu = 0}^{n-1}\tilde c^{\mu+1} + \sum_{c\in\cal C({\rm iv})}a(c)\sum_{\mu = 0}^{n-1}\tilde c^{\mu+1}= \\
  =& W_G,
  \end{align*}
  so the potential $W_G$ is fixed by $\hat G$ and thus assumption \ref{ass:potential} holds.
\end{proof}

To sum up, we have an action of $\hat G$ on the Jacobian algebra $\cal P(Q_G,W_G)$ which satisfies the assumptions \ref{ass:field}-\ref{ass:cycles}. Using the procedure described in Section~\ref{sec:setup} we can construct from it a new QP $((Q_G)_{\hat G},(W_G)_{\hat G})$ whose Jacobian algebra is Morita equivalent to $\Lambda$. Now we want to construct an explicit isomorphism $\cal P((Q_G)_{\hat G},(W_G)_{\hat G}) \cong \Lambda$.

Firstly, let us give an explicit description of $((Q_G)_{\hat G},(W_G)_{\hat G})$.

Let $\cal E_G=\cal E'_G \sqcup \cal E''_G$, where $\cal E'_G=\{\eta_0^\varepsilon \,|\, \varepsilon\in\cal E''\}$ and $\cal E''_G=\{\eta^\varepsilon \,|\, \varepsilon\in\cal E'\}$. Then $\cal E_G$ is a set of representatives for the orbits of the action of $\hat G$ on $Q_G$. The elements of $\cal E'_G$ and $\cal E''_G$ have orbits of cardinality $n$ and $1$ respectively.

The arrows of $Q_G$ can be divided into four families, according to whether their starting and ending points are fixed or not by the action of $\hat G$.

\begin{enumerate}[label=(\arabic*)]
  \item Arrows between two non-fixed vertices. These are all the arrows of the form $\tilde \alpha^\mu\colon \eta^{\varepsilon}_\mu \to \eta^{\varepsilon'}_{\mu-b(\alpha)}$, where $\alpha: \varepsilon\to \varepsilon'$ is an arrow of type (4) in $Q$ and $0\leq\mu\leq n-1$. Among them, the arrows which are of type (1) with respect to the action of $\hat G$ on $Q_G$ are the ones which end in $\cal E'_G$, i.e.,~the ones of the form $\tilde\alpha^{b(\alpha)}\colon \eta^\varepsilon_{b(\alpha)} \to \eta^{\varepsilon'}_0$. Since $\eta^\varepsilon_{b(\alpha)}=\chi^{b(\alpha)}(\eta^\varepsilon_0)$, we have that $t(\tilde\alpha^{b(\alpha)})=b(\alpha)$.
  \item Arrows from a non-fixed vertex to a fixed one. These are all the arrows of the form $\tilde \alpha^\mu\colon \eta^\varepsilon_\mu \to \eta^{\varepsilon'}$, where $\alpha:\varepsilon\to \varepsilon'$ is an arrow of type (3) in $Q$ and $0\leq\mu\leq n-1$. Among them, the arrows which are of type (2) with respect to the action of $\hat G$ on $Q_G$ are the ones which start in $\cal E'_G$, i.e.,~the ones of the form $\tilde\alpha^0\colon \eta^\varepsilon_0 \to \eta^{\varepsilon'}$.
  \item Arrows from a fixed vertex to a non-fixed one. These are all the arrows of the form $\tilde \alpha^\mu\colon \eta^{\varepsilon} \to \eta^{\varepsilon'}_\mu$, where $\alpha:\varepsilon\to \varepsilon'$ is an arrow of type (2) in $Q$ and $0\leq\mu\leq n-1$. Among them, the arrows which are of type (3) with respect to the action of $\hat G$ on $Q_G$ are the ones which end in $\cal E'_G$, i.e.,~the ones of the form $\tilde\alpha^0\colon \eta^\varepsilon \to \eta^{\varepsilon'}_0$.
  \item Arrows between two fixed vertices. These are all the arrows of the form $\tilde\alpha\colon \eta^{\varepsilon} \to \eta^{\varepsilon'}$, where $\alpha:\varepsilon\to \varepsilon'$ is an arrow of type (1) in $Q$. All of them are of type (4) with respect to the action of $\hat G$ on $Q_G$. Since $\chi(\tilde\alpha)=\zeta^{t(\alpha)} \tilde\alpha$, we have that $b(\tilde\alpha)=t(\alpha)$.
\end{enumerate}

We deduce that the quiver $(Q_G)_{\hat G}$ is made as follows. Its vertices are $\eta^\varepsilon_0\otimes1$ for $\varepsilon\in\cal E''$ and $\eta^\varepsilon\otimes e_\nu$ for $\varepsilon\in\cal E'$, $0\leq\nu\leq n-1$, 
while its arrows are the following:
\begin{enumerate}[label = (\arabic*)]
  \item $\tilde\beta\colon \eta^\varepsilon_0\otimes1 \to \eta^{\varepsilon'}_0\otimes1$, where $\beta=\tilde\alpha^{b(\alpha)}$ and $\alpha:\varepsilon\to \varepsilon'$ is an arrow of type (4) in $Q$,
  \item $\tilde\beta^\nu\colon \eta^\varepsilon_0\otimes1 \to \eta^{\varepsilon'}\otimes e_\nu$, where $\beta=\tilde\alpha^0$, $0\leq\nu\leq n-1$ and $\alpha:\varepsilon\to \varepsilon'$ is an arrow of type (3) in $Q$,
  \item $\tilde\beta^\nu\colon \eta^\varepsilon\otimes e_\nu \to \eta^{\varepsilon'}_0\otimes1$, where $\beta=\tilde\alpha^0$, $0\leq\nu\leq n-1$ and $\alpha:\varepsilon\to \varepsilon'$ is an arrow of type (2) in $Q$,
  \item $\tilde\beta^\nu\colon \eta^\varepsilon\otimes e_\nu \to \eta^{\varepsilon'}\otimes e_{\nu-t(\alpha)}$, where $\beta=\tilde\alpha$, $0\leq\nu\leq n-1$ and $\alpha:\varepsilon\to \varepsilon'$ is an arrow of type (1) in $Q$.
\end{enumerate}

\begin{prop}
  \label{prop:main2b}
Let $\phi\colon (Q_G)_{\hat G} \to Q$ be the morphism of quivers defined as follows.
\begin{itemize}
  \item $\phi(\eta^\varepsilon_0\otimes1) = \varepsilon$ for $\varepsilon\in\cal E''$.
  \item $\phi(\eta^\varepsilon\otimes e_\mu) = g^\mu(\varepsilon)$ for $\varepsilon\in\cal E'$, $0\leq\mu\leq n-1$.
  \item $\phi(\tilde\beta) = \alpha$, where $\beta=\tilde\alpha^{b(\alpha)}$ and $\alpha$ is an arrow of type (4) in $Q$.
  \item $\phi(\tilde\beta^\nu) = g^\nu(\alpha)$, where $\beta=\tilde\alpha^0$, $0\leq\nu\leq n-1$ and $\alpha$ is an arrow of type (3) in $Q$.
  \item $\phi(\tilde\beta^\nu) = g^\nu(\alpha)$, where $\beta=\tilde\alpha^0$, $0\leq\nu\leq n-1$ and $\alpha$ is an arrow of type (2) in $Q$.
  \item $\phi(\tilde\beta^\nu) = g^{\nu-t(\alpha)}(\alpha)$, where $\beta=\tilde\alpha$, $0\leq\nu\leq n-1$ and $\alpha$ is an arrow of type (1) in $Q$.
\end{itemize}
Then $\phi$ is an isomorphism and, if we extend it to an isomorphism between the corresponding path algebras, we have $\phi((W_G)_{\hat G})=W$.
\end{prop}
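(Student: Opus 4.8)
The plan is to prove the statement in two stages: first that $\phi$ is an isomorphism of quivers, and then, after extending it to the path algebras, that it sends $(W_G)_{\hat G}$ to $W$. Throughout I would use the explicit list of arrows of $(Q_G)_{\hat G}$ given immediately before the proposition, together with the bookkeeping of types established in the proof of Proposition~\ref{prop:main2a}.

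First I would check that $\phi$ is a bijective morphism of quivers. On vertices this is immediate: the $\eta^\varepsilon_0\otimes1$ with $\varepsilon\in\mathcal E''$ are sent bijectively onto the $G$-fixed vertices of $Q$, while the $\eta^\varepsilon\otimes e_\mu$ with $\varepsilon\in\mathcal E'$ are sent to $g^\mu(\varepsilon)$, sweeping each non-fixed $G$-orbit (of cardinality $n$ by \ref{ass:cardinality}) exactly once. On arrows I would run through the four families (1)--(4): in each case one verifies that $\phi$ respects $\mathfrak s$ and $\mathfrak t$. For instance, for a family-(4) arrow $\tilde\beta^\nu\colon \eta^\varepsilon\otimes e_\nu \to \eta^{\varepsilon'}\otimes e_{\nu-t(\alpha)}$ with $\alpha$ of type (1) in $Q$ (so $\mathfrak s(\alpha)=g^{t(\alpha)}(\varepsilon)$ and $\mathfrak t(\alpha)=\varepsilon'$), one has $\phi(\tilde\beta^\nu)=g^{\nu-t(\alpha)}(\alpha)$, whose source is $g^\nu(\varepsilon)=\phi(\eta^\varepsilon\otimes e_\nu)$ and whose target is $g^{\nu-t(\alpha)}(\varepsilon')=\phi(\eta^{\varepsilon'}\otimes e_{\nu-t(\alpha)})$. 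Bijectivity then follows because each $G$-orbit of arrows of $Q$ contains a unique arrow of type (1)--(4), and the relevant index ($\nu$, when present) sweeps out the whole orbit. Extending linearly gives an isomorphism of path algebras $\kk(Q_G)_{\hat G}\xrightarrow{\sim}\kk Q$.

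Next I would record the correspondence of cycle types. Since the $\hat G$-fixed vertices of $Q_G$ are precisely the $\eta^\varepsilon$ with $\varepsilon\in\mathcal E'$, counting fixed vertices shows that the second construction interchanges types (i)$\leftrightarrow$(iv) and (ii)$\leftrightarrow$(iii). The coefficients of the cycles of $W_G$ are read off from Notation~\ref{not:wg}, and the orbit sizes together with the twists $\chi(\tilde c)=\tilde c$, $\chi(\tilde c^\mu)=\zeta^{-p(c)}\tilde c^{\mu+1}$, $\chi(\tilde c^\mu)=\tilde c^{\mu+1}$ computed in Proposition~\ref{prop:main2a} supply the invariants ($p_{\hat G}$, $q_{\hat G}$, orbit cardinalities) needed to feed $W_G$ back into the formula of Notation~\ref{not:wg}. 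I would then treat the four families in turn, applying $\phi$ to each summand of $(W_G)_{\hat G}$ and matching it with the corresponding summand of $W$. The cleanest case is that of a type-(i) cycle $c$ of $W$: here $\tilde c$ is $\hat G$-type (iv) with coefficient $a(c)\,|Gc|/n$, and the type-(iv) part of the formula produces $a(c)\frac{|Gc|}{n}\sum_{\nu=0}^{n-1}\widetilde{\tilde c}^{\,\nu}$. Unwinding the definitions and using that the $t$- and $b$-exponents telescope (because $t(\alpha_1)+\cdots+t(\alpha_l)\equiv0\pmod n$) yields $\phi(\widetilde{\tilde c}^{\,\nu})=g^\nu * c$, so the family sweeps the orbit $Gc$ with uniform multiplicity $n/|Gc|$; the prefactor $|Gc|/n$ then collapses the sum to $a(c)\sum_{d\in Gc}d$, which is exactly the type-(i) part of $W$. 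The other three families are analogous, the only new ingredient being the identity $a(g*c)=\zeta^{q(c)}a(c)$ established just before Lemma~\ref{lem:1234}, which ensures that the powers of $\zeta$ coming from the factors $\zeta^{-p(c)\mu}$ and from the $q$- and $b$-invariants recombine correctly.

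The main obstacle is entirely combinatorial: one must track, through two nested applications of the construction, both the index shifts that select the distinguished representative cycle at each stage and the accumulated powers of $\zeta$ attached to the constituent arrows. The content of the verification is to show that these powers telescope so that each cycle of $(W_G)_{\hat G}$ is sent by $\phi$ to the correct $G$-translate $g^r * c$ with coefficient $a(g^r * c)$, and that the multiplicity factor $|Gc|/n$ compensates exactly for orbits of size smaller than $n$. Once the type correspondence and the twist computations of Proposition~\ref{prop:main2a} are in hand, each of the four computations is a routine, if lengthy, check closely parallel to the proof of Lemma~\ref{lem:1234}.
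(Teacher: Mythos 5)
Your proposal is correct and follows essentially the same route as the paper's proof: verify that $\phi$ is a bijective quiver morphism, observe that the $\hat G$-construction interchanges cycle types (i)$\leftrightarrow$(iv) and (ii)$\leftrightarrow$(iii), import the twist data ($p(d)=-q(c)$, $b(\tilde\alpha)=t(\alpha)$, orbit sizes) from Proposition~\ref{prop:main2a}, and then match the four families of summands of $(W_G)_{\hat G}$ against $W$, with the $\zeta$-powers telescoping and the factor $|Gc|/n$ compensating for small orbits exactly as you describe. Your worked type-(i)/type-(iv) case and the role you assign to the identity $a(g*c)=\zeta^{q(c)}a(c)$ coincide with the paper's case-by-case computation and its final collection of terms.
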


\begin{proof}
We first note that $\phi$ is a well defined morphism of quivers. Moreover, by what we observed earlier in this section, $\phi$ is a bijection on both the sets of vertices and arrows, thus it is an isomorphism.

Given the set $\cal E_G$ defined above, we can choose a set $\cal C_G= \{\hat d \,|\, d \text{ cycle in } W_G\}$ of representatives for the $*$ action of $\hat G$ on cycles as in \S\ref{subsec:cycles}. We have that $\cal C_G = \cal C_G({\rm i}) \sqcup \cal C_G({\rm ii}) \sqcup \cal C_G({\rm iii}) \sqcup \cal C_G({\rm iv})$.
We now describe each of these four subsets and show where their elements are sent by $\phi$. We use the notation $t_{i,j}$ of the proof of Lemma \ref{lem:1234}.

\begin{enumerate}[label=(\roman*)]
  \item Cycles of type (i) in $Q_G$ are the ones of the form $d=\tilde c^\mu$, where $c\in\cal C({\rm iv})$.
    If we write \mbox{$c=\alpha_1\cdots\alpha_l$} for some arrows $\alpha_i$ of type (4) in $Q$, then $\tilde c^\mu = \tilde{\alpha}_1^{\mu-b_2} \tilde{\alpha}_2^{\mu-b_3} \tilde{\alpha}_3^{\mu-b_4} \cdots \tilde{\alpha}_{l-1}^{\mu-b_l} \tilde{\alpha}_l^\mu$, where \mbox{$b_i=b(\alpha_i)+\dots+b(\alpha_l)$}.
  Hence we can choose $\hat d = \tilde{\alpha}_1^{-b_2} \tilde{\alpha}_2^{-b_3} \tilde{\alpha}_3^{-b_4} \cdots \tilde{\alpha}_{l-1}^{-b_l} \tilde{\alpha}_l^0=\tilde c^0$, and $\cal C_G({\rm i})$ is the subset of all the cycles of this kind.
  Moreover we have that $\tilde d = \tilde\beta_1 \cdots \tilde\beta_l$, where $\beta_i=\tilde\alpha_i^{b(\alpha_i)}$. It follows that
  $$\phi(\tilde d) = \phi(\tilde\beta_1 \cdots \tilde\beta_l) = \alpha_1\cdots\alpha_l = c.$$
  Let us now look at the coefficient $a(d)$ of $d$ as a summand of $W_G$. 
  The cycle $c$ of $W$ gives rise to a number $x = |\hat G \tilde c^\mu|$ of distinct cycles in $W_G$ (this does not depend on the choice of $\mu$). 
  Then $a(d) = a(c)\frac{n}{x}$.
  
  \item Cycles of type (ii) in $Q_G$ are the ones of the form $d=\tilde c^\mu$, where $c\in\cal C({\rm iii})$.
    If we write $c=\alpha_1\alpha_2\cdots\alpha_l$ for $\alpha_1$ of type (2), $\alpha_2$ of type (3), and $\alpha_3,\dots,\alpha_l$ of type (4) in $Q$, then \mbox{$\tilde c^\mu = \tilde{\alpha}_1^{\mu} \tilde{\alpha}_2^{\mu-b_3} \tilde{\alpha}_3^{\mu-b_4} \cdots \tilde{\alpha}_{l-1}^{\mu-b_l} \tilde{\alpha}_l^\mu$}, where we write $b_i=b(\alpha_i)+\dots+b(\alpha_l)$.
    Hence we obtain that \mbox{$\hat d = \tilde{\alpha}_1^0 \tilde{\alpha}_2^{-b_3} \tilde{\alpha}_3^{-b_4} \cdots \tilde{\alpha}_{l-1}^{-b_l} \tilde{\alpha}_l^0=\tilde c^0$}, and $\cal C_G({\rm ii})$ is the subset of all the cycles of this kind.
  Moreover we have that $\tilde d^\nu = \tilde\beta_1^\nu \tilde\beta_2^\nu \tilde\beta_3 \cdots \tilde\beta_l$, where $\beta_1=\tilde\alpha_1^0$, $\beta_2=\tilde\alpha_2^0$ and $\beta_i=\tilde\alpha_i^{b(\alpha_i)}$ for $i\geq3$. It follows that (recall that by definition $q(c)=b_3$)
  $$\phi(\tilde d) = \phi(\tilde\beta_1^\nu \tilde\beta_2^\nu \tilde\beta_3 \cdots \tilde\beta_l) = g^\nu(\alpha_1)g^\nu(\alpha_2)\alpha_3\cdots\alpha_l = \zeta^{-b_3} g^\nu(c) = \zeta^{-q(c)} g^\nu(c).$$
  Note that $\beta_2=\chi^{b_3} \tilde\alpha_2^{-b_3}$. This implies that $p(d)=-q(c)$ and so $\phi(\tilde d) = \zeta^{p(d)} g^\nu(c)$.
  
  \item Cycles of type (iii) in $Q_G$ are the ones of the form $d=\tilde c^\mu$, where $c\in\cal C({\rm ii})$.
    If we write \mbox{$c=\alpha_1 \alpha_2 g^{t_2}(\alpha_3) \cdots g^{t_{2,l-1}}(\alpha_l)$} for $\alpha_1$ of type (3), $\alpha_2$ of type (2), and $\alpha_3,\dots,\alpha_l$ of type (1) in $Q$, then $\tilde c^\mu = \tilde{\alpha}_1^\mu \widetilde{g^{-t_2}(\alpha_2)}^\mu\tilde{\alpha}_3\cdots \tilde{\alpha}_l$.
  Hence $\hat d = \tilde{\alpha}_1^0 \widetilde{g^{-t_2}(\alpha_2)}^0 \tilde{\alpha}_3\cdots \tilde{\alpha}_l=\tilde c^0$, and $\cal C_G({\rm iii})$ is the subset of all the cycles of this kind.
  Now define $\beta_1=\tilde\alpha_1^0$, $\beta_2=\widetilde{g^{-t_2}(\alpha_2)}^0$ and $\beta_i=\tilde\alpha_i$ for $i\geq3$. Recall that, for $i\geq3$, $\chi(\beta_i)=\zeta^{t(\alpha_i)}\beta_i$, so $b(\beta_i)=t(\alpha_i)$. If we put $b'_i=b(\beta_i)+\dots+b(\beta_l)$ for $i\geq3$, we have that $\tilde d^\nu = \tilde\beta_1^\nu \tilde\beta_2^{\nu-b'_3} \tilde\beta_3^{\nu-b'_4} \cdots \tilde\beta_{l-1}^{\nu-b'_l} \tilde\beta_l^\nu$. Then
  \begin{align*}
    \phi(\tilde d^\nu) &= \phi(\tilde\beta_1^\nu \tilde\beta_2^{\nu-b'_3} \tilde\beta_3^{\nu-b'_4} \cdots \tilde\beta_{l-1}^{\nu-b'_l} \tilde\beta_l^\nu) = \\
    &= g^\nu(\alpha_1)g^{\nu-b'_3}(g^{-t_2}(\alpha_2))g^{\nu-b'_4-t(\alpha_3)}(\alpha_3) \cdots g^{\nu-t(\alpha_l)}(\alpha_l) = \\
    &= g^\nu(\alpha_1 g^{-t_{2,l}}(\alpha_2) g^{-t_{3,l}}(\alpha_3) \cdots g^{-t_l}(\alpha_l))  = \\
    &= g^\nu(\alpha_1 \alpha_2 g^{t_2}(\alpha_3) \cdots g^{t_{2,{l-1}}}(\alpha_l))  = \\
     &= g^\nu(c).
  \end{align*}
  
  \item Cycles of type (iv) in $Q_G$ are the ones of the form $d=\tilde c$, where $c\in\cal C({\rm i})$.
  If we write $c=\alpha_1 g^{t_1}(\alpha_2) g^{t_{1,2}}(\alpha_3) \cdots g^{t_{1,l-1}}(\alpha_l)$ for $\alpha_i$ of type (1) in $Q$, then $\tilde c = \tilde{\alpha}_1 \cdots \tilde{\alpha}_l$.
  Hence $\hat d = d$, and $\cal C_G({\rm iv})$ is the subset of all the cycles of this kind.
  If we put $\beta_i=\tilde\alpha_i$ for all $i$, then \mbox{$\tilde d^\nu = \tilde\beta_1^{\nu-b'_2} \tilde\beta_2^{\nu-b'_3} \tilde\beta_3^{\nu-b'_4} \cdots \tilde\beta_{l-1}^{\nu-b'_l} \tilde\beta_l^\nu$}.
  It follows that
  \begin{align*}
    \phi(\tilde d^\nu) &= \phi(\tilde\beta_1^{\nu-b'_2} \tilde\beta_2^{\nu-b'_3} \tilde\beta_3^{\nu-b'_4} \cdots \tilde\beta_{l-1}^{\nu-b'_l} \tilde\beta_l^\nu) = \\
    &= g^{\nu-b'_2-t(\alpha_1)}(\alpha_1)g^{\nu-b'_3-t(\alpha_2)}(\alpha_2) \cdots g^{\nu-t(\alpha_l)}(\alpha_l) = \\
    &= g^\nu(\alpha_1 g^{t_1}(\alpha_2) g^{t_{1,2}}(\alpha_3) \cdots g^{t_{1,l-1}}(\alpha_l)) = \\
    &= g^\nu(c).
  \end{align*}

\end{enumerate}

Now we can write $(W_G)_{\hat G}$ as follows:
\begin{align*}
  (W_G)_{\hat G} &= \sum_{d\in \mathcal C_G({\rm i})} a(d)\frac{|\hat G d|}{n}\tilde d + \sum_{d\in \mathcal C_G({\rm ii})}a(d) \sum_{\nu = 0}^{n-1}\zeta^{-p(d)\nu}\tilde d^{\nu} + \\
  &+ \sum_{d\in \mathcal C_G({\rm iii})}a(d)\sum_{\nu = 0}^{n-1}\tilde d^{\nu} + \sum_{d\in \mathcal C_G({\rm iv})}a(d)\sum_{\nu = 0}^{n-1}\tilde d^{\nu} = \\
  &= \sum_{c\in \mathcal C({\rm iv}),d=\tilde c^0} a(c)\tilde d + \sum_{c\in \mathcal C({\rm iii}),d=\tilde c^0}a(c) \sum_{\nu = 0}^{n-1}\zeta^{q(c)\nu}\tilde d^{\nu} + \\
  &+ \sum_{c\in \mathcal C({\rm ii}),d=\tilde c^0}a(c)\sum_{\nu = 0}^{n-1}\tilde d^{\nu} + \sum_{c\in \mathcal C({\rm i}),d=\tilde c}a(c)\frac{|Gc|}{n}\sum_{\nu = 0}^{n-1}\tilde d^{\nu}.
\end{align*}
Applying $\phi$ we get
\begin{align*}
  \phi((W_G)_{\hat G}) &= \sum_{c\in \mathcal C({\rm iv}),d=\tilde c^0} a(c)\phi(\tilde d) + \sum_{c\in \mathcal C({\rm iii}),d=\tilde c^0}a(c) \sum_{\nu = 0}^{n-1}\zeta^{q(c)\nu}\phi(\tilde d^{\nu}) + \\
  &+ \sum_{c\in \mathcal C({\rm ii}),d=\tilde c^0}a(c)\sum_{\nu = 0}^{n-1}\phi(\tilde d^{\nu}) + \sum_{c\in \mathcal C({\rm i}),d=\tilde c}a(c)\frac{|Gc|}{n}\sum_{\nu = 0}^{n-1}\phi(\tilde d^{\nu}) = \\
  &= \sum_{c\in \mathcal C({\rm iv}),d=\tilde c^0} a(c)c + \sum_{c\in \mathcal C({\rm iii}),d=\tilde c^0}a(c) \sum_{\nu = 0}^{n-1}\zeta^{q(c)\nu} \zeta^{-q(c)\nu} g^\nu(c) + \\
  &+ \sum_{c\in \mathcal C({\rm ii}),d=\tilde c^0}a(c)\sum_{\nu = 0}^{n-1} g^\nu(c) + \sum_{c\in \mathcal C({\rm i}),d=\tilde c}a(c)\frac{|Gc|}{n}\sum_{\nu = 0}^{n-1} g^\nu(c) = \\
  &= \sum_{c\in \mathcal C({\rm iv})} a(c)c + \sum_{\nu = 0}^{n-1} g^\nu\left( \sum_{c\in \mathcal C({\rm iii})} a(c)c + \sum_{c\in \mathcal C({\rm ii})} a(c)c + \sum_{c\in \mathcal C({\rm i})} a(c)\frac{|Gc|}{n} c \right) = \\
  &= W. \qedhere
\end{align*}
\end{proof}

\begin{cor}\label{cor:main2}
Let $\theta$ be the idempotent $\sum_{s\in \mathcal E_G} s\otimes 1$ in $(\eta( \Lambda G )\eta) \hat G$. Then the isomorphism of quivers $\phi:(Q_G)_{\hat G}\to Q$ induces an isomorphism of algebras 
  \begin{align*}
    \theta \left( \left(\eta\left( \Lambda G \right)\eta\right) \hat G\right)\theta \cong \Lambda,
  \end{align*}
 where $\Lambda = \mathcal P(Q, W)$.
\end{cor}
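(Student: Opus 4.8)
The plan is to apply Theorem~\ref{thm:main} a second time, now to the quiver with potential $(Q_G, W_G)$ equipped with the $\hat G$-action. By Proposition~\ref{prop:main2a} this action satisfies the assumptions \ref{ass:field}-\ref{ass:cycles}, and $\hat G$ is cyclic of order $n$ with generator $\chi$, so Theorem~\ref{thm:main} applies. The orbit representatives chosen earlier in this section form the set $\mathcal E_G$, and the Morita idempotent that the construction of Notation~\ref{not:eta} attaches to $\mathcal E_G$ is precisely $\theta = \sum_{s\in\mathcal E_G}s\otimes 1$ (using $\sum_{\nu}e_\nu = 1$ for the vertices fixed by $\hat G$); the resulting quiver with potential is $((Q_G)_{\hat G}, (W_G)_{\hat G})$. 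Hence Theorem~\ref{thm:main} gives an isomorphism
\begin{align*}
  \mathcal P((Q_G)_{\hat G}, (W_G)_{\hat G}) \cong \theta\left(\mathcal P(Q_G, W_G)\hat G\right)\theta.
\end{align*}

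Next I would transport this isomorphism across the one of Theorem~\ref{thm:main}. The $\hat G$-action on $\mathcal P(Q_G, W_G)$ was defined in Proposition~\ref{prop:main2a} by transporting, along the isomorphism $\mathcal P(Q_G, W_G)\cong \eta(\Lambda G)\eta$ of Theorem~\ref{thm:main}, the restriction to $\eta(\Lambda G)\eta$ of the $\hat G$-action of Theorem~\ref{thm:RR}. That isomorphism is therefore $\hat G$-equivariant, and since forming the skew group algebra is functorial for equivariant isomorphisms we obtain $\mathcal P(Q_G, W_G)\hat G \cong (\eta(\Lambda G)\eta)\hat G$. As this isomorphism matches the vertex idempotents indexed by $\mathcal E_G$, it sends $\theta$ to $\theta$, so compressing by $\theta$ yields
\begin{align*}
  \theta\left(\mathcal P(Q_G, W_G)\hat G\right)\theta \cong \theta\left((\eta(\Lambda G)\eta)\hat G\right)\theta.
\end{align*}
Finally, Proposition~\ref{prop:main2b} provides the isomorphism of quivers $\phi\colon (Q_G)_{\hat G}\to Q$ which, extended to the path algebras, sends $(W_G)_{\hat G}$ to $W$; it thus carries the ideal generated by the cyclic derivatives of $(W_G)_{\hat G}$ onto the one generated by the cyclic derivatives of $W$, and so descends to an isomorphism of Jacobian algebras $\mathcal P((Q_G)_{\hat G}, (W_G)_{\hat G})\cong \mathcal P(Q, W) = \Lambda$. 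Concatenating the three isomorphisms gives the $\phi$-induced isomorphism $\theta((\eta(\Lambda G)\eta)\hat G)\theta\cong \Lambda$ asserted in the corollary.

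I expect the only delicate point to be the bookkeeping in the first step: one must check that the generic data produced by the construction of Sections~\ref{sec:setup} and~\ref{sec:quiver} for the input $(Q_G, W_G, \hat G)$ coincide with the concrete objects $\mathcal E_G$, $\theta$, $(Q_G)_{\hat G}$ and $(W_G)_{\hat G}$ already described in Proposition~\ref{prop:main2a} and Proposition~\ref{prop:main2b} --- in particular that $\theta$ is the idempotent $\hat\varepsilon\otimes 1$ associated with $\mathcal E_G$ and that the potential output by the recipe of Notation~\ref{not:wg} agrees with $(W_G)_{\hat G}$. Once this identification is in place the equivariance required in the second step is automatic from the way the $\hat G$-action was defined, and the remainder is a formal concatenation of isomorphisms.
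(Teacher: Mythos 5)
Your proposal is correct and follows essentially the same route as the paper: the paper's proof likewise applies Theorem~\ref{thm:main} a second time to $\eta(\Lambda G)\eta\cong\mathcal P(Q_G,W_G)$ with its $\hat G$-action (the hypotheses being supplied by Proposition~\ref{prop:main2a}) to get $\theta\left(\left(\eta(\Lambda G)\eta\right)\hat G\right)\theta\cong\mathcal P((Q_G)_{\hat G},(W_G)_{\hat G})$, and then invokes Proposition~\ref{prop:main2b} to identify the latter with $\mathcal P(Q,W)$. The equivariance and bookkeeping points you flag (that $\theta$ is the idempotent Notation~\ref{not:eta} attaches to $\mathcal E_G$, and that the transported $\hat G$-action matches) are exactly the identifications the paper leaves implicit, so your write-up is just a more explicit version of the same argument.
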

\begin{proof}
Applying Theorem~\ref{thm:main} to $\eta(\Lambda G)\eta$ with the action of $\hat G$, we get $$\theta \left( \left(\eta\left( \Lambda G \right)\eta\right) \hat G\right)\theta \cong \mathcal P( (Q_G)_{\hat G}, (W_G)_{\hat G}),$$ and the latter is isomorphic to $\cal P(Q,W)$ by Proposition~\ref{prop:main2b}.
\end{proof}

\section{Planar rotation-invariant QPs}\label{sec:nakayama}
Our main result Theorem \ref{thm:main} is about skew group algebras of Jacobian algebras of QPs, but it only applies under some assumptions on the group action. 
There is however a class of QPs which satisfy these assumptions, as well as a way of generating many examples in this class. 
To define this class, we follow \cite{HI11b} and associate a CW-complex to a QP called its canvas. First we need to fix some notation.

We denote by $D^d$ the $d$-disk and by $S^{d-1}=\partial D^d$ the $(d-1)$-sphere in $\R^d$. We suppose that $D^1=[0,1]$ and $S^0=\{0,1\}$.
A CW-complex is a topological space realized as a union $\bigcup_{d\in\Z_{\geq 0}}X^d$, where $X^0$ is a discrete space and each $X^d$ is obtained from $X^{d-1}$ in the following way.
For each $d$ there are a set $\{D^d_a\}_{a\in I_d}$ of copies of the $d$-disk and continuous maps $\phi_a\colon S^{d-1}_a=\partial D^d_a \to X^{d-1}$, such that we have a pushout diagram
$$
\xymatrix{
  \displaystyle\bigsqcup_{a\in I_d} S^{d-1}_a \ar^{(\phi_a)}[r] \arrow[d] & X^{d-1} \ar[d] \\
  \displaystyle\bigsqcup_{a\in I_d} D^d_a \ar^{(\epsilon_a)}[r] & X^d
}
$$
in the category of topological spaces with continuous maps (the left vertical map is given by the inclusions of $S^{d-1}_a$ as boundaries of $D^d_a$). For $d\geq1$ the image of the interior of $D^d_a$ under $\epsilon_a$ is called a $d$-cell. The elements of $X_0$ are called $0$-cells. We say that $X$ has dimension $m$ if $X=X^m$.

\begin{defin}[{\cite[Definition 8.1]{HI11b}}]
Let $(Q,W)$ be a QP and let $Q_2$ be a set of representatives modulo $\on{com}_Q$ of the cycles which appear in $W$. The \emph{canvas} of $(Q,W)$ is the $2$-dimensional CW-complex $X_{(Q,W)}$ defined in the following way. Its cells are indexed by the sets $X_0=Q_0$, $I_1=Q_1$, $I_2=Q_2$. For each $\alpha\in Q_1$ we have an attaching map $\phi_\alpha\colon S^0_\alpha\to X_0$ defined by $\phi_\alpha(0)=\mathfrak s(\alpha)$ and $\phi_\alpha(1)=\mathfrak t(\alpha)$.
If $c=\alpha_0\cdots\alpha_{l-1}\in Q_2$, we define the attaching map $\phi_c\colon S^1_c\to X_1$ by
$$\phi_c\left(\cos\left(\frac{2\pi}{l}(i+t)\right), \sin\left(\frac{2\pi}{l}(i+t)\right) \right)=\epsilon_{\alpha_i}(t)$$
for $i=0,\dots,l-1$ and $t\in [0,1)$. 
\end{defin}

\begin{rmk}
  In other (imprecise) words, the 1-skeleton of $X_{(Q, W)}$ is the underlying graph of $Q$, and we attach 2-cells along the cycles appearing in $W$.
\end{rmk}

\begin{defin}[{\cite[Definition 9.1]{HI11b}}]
  A QP $(Q, W)$ is \emph{planar} if it is simply connected and there exists an embedding of $X_{(Q, W)} $ into $\R^2$.
  We call it \emph{strongly planar} if it is planar and $X_{(Q,W)}$ is homeomorphic to a disk.
\end{defin}

If $(Q, W)$ is a planar QP, then by \cite[Proposition 9.3]{HI11b} the embedding of the quiver $Q$ in $\R^2$ determines the Jacobian algebra, so we can assume
that the coefficients in $W$ are +1 for the clockwise faces, and -1 for the anticlockwise faces.

\begin{defin}
  Let $(Q,W)$ be a planar QP and $G$ be a cyclic group acting on $Q$. 
  We say that $G$ \emph{acts on $(Q,W)$ by rotations} if:
  \begin{itemize}
    \item there is an embedding of $X_{(Q,W)}$ in $\R^2$ such that the action of a generator of $G$ is induced 
      by a rotation of the plane; 
    \item the action of $G$ is faithful;
    \item assumption \ref{ass:cycles} is satisfied.
  \end{itemize}
\end{defin}
Notice that in this case the image $\on{im}(G)\subseteq \on{Aut}(Q)$ is necessarily finite. For simplicity, we will identify $G$ with $\on{im}(G)$.

  We remark some facts which follow immediately from the definition, and directly imply that this class of quivers falls within the scope of Theorem \ref{thm:main}.

\begin{lemma}
  Let $G$ act on a planar QP $(Q,W)$ by rotations. Then the action of $G$ satisfies the assumptions \ref{ass:permuted}-\ref{ass:cycles}.
\end{lemma}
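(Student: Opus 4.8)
The plan is to verify the assumptions \ref{ass:permuted}--\ref{ass:cycles} one at a time, using the three defining properties of a rotation action: the existence of an embedding of $X_{(Q,W)}$ in $\R^2$ on which a generator $g$ acts as a rotation $\rho$, the faithfulness of the action, and the fact that assumption \ref{ass:cycles} is built into the definition. Assumption \ref{ass:cycles} therefore requires nothing to prove. For \ref{ass:permuted}, I would observe that $\rho$ is a homeomorphism of $\R^2$ preserving the CW-structure of the canvas, so it permutes the $0$-cells (the vertices) and carries each $1$-cell (arrow) bijectively onto a $1$-cell; since a rotation preserves orientation, sources and targets are respected, and hence the induced map on $\kk Q$ permutes vertices and sends each arrow to an honest arrow. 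This is in fact stronger than \ref{ass:permuted}, and it already yields \ref{ass:arrows}, since every arrow (a fortiori every arrow with a non-fixed endpoint) is sent to an arrow.

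The key step is \ref{ass:cardinality}. Here I would use that, by faithfulness, $g$ acts as a non-trivial rotation $\rho$, and that each non-identity element $g^i$ with $0<i<n$ acts as the rotation $\rho^i$ about the same center $o\in\R^2$. A non-trivial planar rotation fixes exactly its center, so the only point of the plane fixed by some non-identity element of $G$ is $o$. Consequently a vertex is fixed by $G$ precisely when it coincides with $o$; any other vertex has trivial stabiliser and hence an orbit of cardinality $n=|G|$. This proves \ref{ass:cardinality}, and it also shows that at most one vertex of $Q$ is fixed.

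For \ref{ass:potential} I would invoke the sign normalisation allowed for planar QPs by \cite[Proposition 9.3]{HI11b}, taking the coefficient of each face to be $+1$ or $-1$ according to whether it is clockwise or anticlockwise. Because $\rho$ preserves the orientation of the plane, it carries clockwise faces to clockwise faces and anticlockwise faces to anticlockwise faces; together with the fact that $g$ sends each defining cycle to another defining cycle with coefficient $1$ (established for \ref{ass:permuted}), this gives $g(W)=W$, i.e.\ $GW=W$. Finally, for \ref{ass:fixed}, the only arrows between fixed vertices are loops at $o$ (there being at most one fixed vertex), and by Remark \ref{rmk:fixed} we may change the basis of the span of such loops so that each is sent to a scalar multiple of itself; this base change disturbs none of \ref{ass:permuted}--\ref{ass:arrows}, so \ref{ass:fixed} holds as well.

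The main obstacle is the geometric input behind \ref{ass:cardinality}: one must argue carefully that all non-identity powers of $g$ share the single fixed point $o$, so that faithfulness upgrades "not fixed" to "free orbit of size $n$". Everything else reduces to bookkeeping once the orientation-preserving, structure-preserving nature of a plane rotation has been recorded.
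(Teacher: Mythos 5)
Your proof is correct and follows essentially the same route as the paper's: assumptions \ref{ass:permuted} and \ref{ass:arrows} from the fact that a rotation permutes vertices and sends arrows to arrows, \ref{ass:fixed} via the base change of Remark~\ref{rmk:fixed}, \ref{ass:cardinality} from faithfulness (non-trivial rotations fix only the centre), \ref{ass:potential} from faces being mapped to faces, and \ref{ass:cycles} by definition. The only difference is that you spell out the geometric details (shared centre of all powers of the rotation, orientation-preservation matching the $\pm1$ sign convention) that the paper leaves implicit.
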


\begin{proof}
  A rotation permutes the vertices and maps arrows to arrows, so assumptions \ref{ass:permuted} and \ref{ass:arrows} are satisfied. By Remark~\ref{rmk:fixed}, we can assume that assumption~\ref{ass:fixed} is also satisfied.
  Since we are assuming that $G$ acts faithfully, we have that every vertex which is not fixed has order the order of a rotation generating $G$, hence assumption \ref{ass:cardinality} is satisfied. Assumption \ref{ass:potential} holds because $G$ maps faces of $X_{(Q, W)}$ to faces.
  Finally, assumption \ref{ass:cycles} holds by definition.
\end{proof}

There is a way of producing strongly planar QPs with a group acting by rotations by means of so-called Postnikov diagrams (see \cite{Pos06}, \cite{BKM16}, \cite{Pas17b}).
A Postnikov diagram is a collection of oriented curves in a disk subject to some axioms depending on two integer parameters $a,n\geq 1$, and it naturally gives rise to a planar QP.
For this result we need to assume that $\kk = \C$.
\begin{thm}[{\cite[Corollary 7.3]{Pas17b}}]\label{thm:postnikov}
  An $(a,n)$-Postnikov diagram is invariant under rotation by $\frac{2\pi a}{n}$ if and only if the corresponding QP is self-injective.
In this case, a Nakayama automorphism is given by this rotation.
\end{thm}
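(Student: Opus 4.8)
The plan is to characterize self-injectivity of $\Lambda = \mathcal P(Q,W)$ via its Nakayama permutation and then to identify that permutation geometrically with the rotation by $\frac{2\pi a}{n}$. Since self-injective basic algebras are exactly Frobenius basic algebras, I may work with the Nakayama automorphism throughout. I would start from the general criterion: a basic finite-dimensional algebra $\Lambda$ is self-injective if and only if every indecomposable projective $P_i = e_i\Lambda$ has simple socle and the assignment $i \mapsto \nu(i)$ determined by $\operatorname{soc} P_i \cong S_{\nu(i)}$ is a permutation of $Q_0$; in that case $P_i \cong I_{\nu(i)}$, and $\nu$ is the Nakayama permutation lifting to the Nakayama automorphism. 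Because $(Q,W)$ comes from a Postnikov diagram it is strongly planar, so $\Lambda$ is finite-dimensional and each $P_i$ has an explicit basis consisting of paths out of $i$ modulo the Jacobian relations $\partial_\alpha W = 0$.

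The key technical step is to compute $\operatorname{soc} P_i$ directly from the diagram. For a planar QP the relation $\partial_\alpha W = 0$ equates, for each arrow $\alpha$, the complementary path of the clockwise face through $\alpha$ with that of the anticlockwise face (with the sign convention that clockwise faces carry coefficient $+1$ and anticlockwise faces $-1$). Iterating this identification transports a path out of $i$ across adjacent faces, and since the faces of the diagram tile the disk $X_{(Q,W)}$, I would argue that every maximal nonzero path out of $i$ can be pushed toward the boundary and terminates at a single, well-defined vertex. This yields a simple socle $S_{\sigma(i)}$ and a well-defined map $\sigma\colon Q_0 \to Q_0$ whose value records where the longest path from $i$ ends when traced around the disk.

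Finally I would match $\sigma$ with the rotation. If the diagram is invariant under $\rho = $ rotation by $\frac{2\pi a}{n}$, then $\rho$ sends faces to faces preserving orientation, hence defines an automorphism of $(Q,W)$ and of $\Lambda$; tracking a maximal path once around the disk and using rotation-invariance, I would show $\sigma = \rho$, so $\sigma$ is a bijection, $\Lambda$ is self-injective, and the Nakayama permutation is $\rho$, with Nakayama automorphism the induced rotation. For the converse, self-injectivity forces $\sigma$ to be a bijection compatible with the cyclic boundary combinatorics of the $(a,n)$-diagram, and I would argue that the only disk symmetry realizing this shift is rotation by $\frac{2\pi a}{n}$, forcing the diagram to be rotation-invariant. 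The main obstacle is the middle step: establishing rigorously that each projective has \emph{simple} socle and computing $\sigma$ explicitly requires a careful analysis of maximal paths in the Jacobian algebra modulo the potential relations and their interaction with the tiling of the disk by the faces of the Postnikov diagram; the rotation-matching in the last step is then comparatively formal.
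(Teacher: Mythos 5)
You should first be aware that the paper contains no proof of Theorem~\ref{thm:postnikov}: it is quoted verbatim from \cite[Corollary 7.3]{Pas17b}, so your proposal can only be judged as a self-contained proof of that external result. Your reduction of self-injectivity to ``each $P_i$ has simple socle and $i\mapsto\nu(i)$ is a permutation'' is correct for basic finite-dimensional algebras (the resulting embedding $\bigoplus_i P_i\hookrightarrow\bigoplus_i I_{\nu(i)}\cong D\Lambda$ is forced to be an isomorphism by a dimension count), but everything after that reduction has genuine gaps. The mechanism you propose for computing socles is already inaccurate: the identity ``$\partial_\alpha W=0$ equates the complementary paths of the two faces through $\alpha$'' holds only for \emph{internal} arrows; a strongly planar QP necessarily has boundary arrows, which lie on a single face, and for those the relation \emph{kills} the complementary path instead of transporting it. So paths are not merely ``pushed toward the boundary''---many of them die along the way, and whether what survives spans a one-dimensional socle is precisely the hard combinatorial content of the theorem. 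Nothing in your argument uses the defining alternating-strand conditions of an $(a,n)$-Postnikov diagram, yet some Postnikov-specific input is indispensable: planarity of the canvas alone cannot decide when the socle map is a bijection, and your claimed identification $\sigma=\rho$ under rotation-invariance is asserted, not derived. You yourself flag this middle step as ``the main obstacle''; since it is the entire content of the forward implication, the proposal is a plan rather than a proof.

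The converse direction has a more structural gap. From self-injectivity you obtain only an abstract permutation $\nu$ of the vertices (and a Nakayama automorphism defined only up to inner automorphisms); this is algebraic data, not a ``disk symmetry,'' so the phrase ``the only disk symmetry realizing this shift is rotation by $\frac{2\pi a}{n}$'' has nothing to apply to. To convert $\nu$ into rotation-invariance of the diagram you would need at least two further arguments, both missing: (i) that $\nu$ must agree with the vertex permutation induced by the rotation---this is where the boundary combinatorics of the $(a,n)$-diagram (for instance via the boundary algebra of \cite{BKM16}) genuinely enters; and (ii) a rigidity statement letting you reconstruct the Postnikov diagram, up to isotopy, from its QP, so that a potential-preserving quiver automorphism inducing $\nu$ yields an isotopy between the diagram and its rotation. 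Finally, even in the forward direction, knowing $\operatorname{soc}P_i\cong S_{\rho(i)}$ identifies only the Nakayama \emph{permutation}; concluding that the rotation-induced algebra automorphism $\varphi$ is a Nakayama \emph{automorphism} additionally requires exhibiting $D\Lambda\cong\Lambda_\varphi$ (equivalently, isomorphisms $P_i\cong I_{\rho(i)}$ assembled compatibly), a point your sketch does not address.
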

In particular, there is a finite cyclic group acting by rotations on a planar QP, so we can apply our construction.
The following result justifies the claim that Postnikov diagrams give rise to many examples. Namely, rotation-invariant Postnikov diagrams exist and in fact abound. 

\begin{thm}\cite{PTZ18}
 There exists an $(a,n)$-Postnikov diagram which is invariant under rotation by $\frac{2\pi a}{n}$ if and only if $a$ is congruent to -1, 0 or 1 modulo $n/\on{GCD}(n,a)$.
 In particular there are infinitely many self-injective planar QPs with Nakayama automorphism of order $d$, for any choice of $d$.
\end{thm}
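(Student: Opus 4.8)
The plan is to treat the biconditional and the concluding existence statement separately. Throughout set $g=\gcd(n,a)$ and $d=n/g$, so that rotation $\rho$ by $\frac{2\pi a}{n}$ shifts the $n$ boundary marked points by $a$ positions and has order exactly $d$. Since the strand joining boundary point $i$ to $i+a$ is carried by $\rho$ to the one joining $i+a$ to $i+2a$, the map $\rho$ permutes the $n$ strands in $g$ orbits of size $d$. A $\rho$-invariant diagram is thus equivalent to the data of a diagram on one angular sector of size $\frac{2\pi}{d}$ about the centre $O$ of the disk, together with the identification of the two radial edges of the sector induced by $\rho$; the whole argument reduces to classifying what may legally sit at the fixed point $O$.

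For the ``only if'' direction I would analyse the local picture at $O$. Since $\rho$ rotates tangent directions at $O$ by a nonzero angle whenever $d\ge 2$, an oriented strand cannot pass through $O$ in its interior (it would be reversed or sent off itself), and for $d\ge 3$ the point $O$ cannot be an ordinary crossing either; hence for $d\ge 3$ the centre lies in the interior of a $\rho$-invariant face, whose boundary word in the strand-arcs is carried to itself by the induced order-$d$ cyclic symmetry. The admissible central configurations --- an oriented (clockwise or anticlockwise) face and the two degenerate possibilities in which $O$ is a crossing or sits in a minimal central region --- are essentially three in number, and matching each against the global ``shift by $a$'' data pins down the net advance of a strand across one $\frac{1}{d}$-sector, forcing $a\equiv 0,+1$ or $-1\pmod d$. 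I expect this translation of the local rotational constraint at $O$ into an exact congruence on $a$ to be the main obstacle, as it is precisely where the purely local symmetry must be reconciled with the global boundary permutation.

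For the ``if'' direction I would reverse the construction. For each residue $a\equiv 0,+1,-1\pmod d$ I would exhibit one legal central configuration of the corresponding type, extend it to a Postnikov diagram on a single $\frac{2\pi}{d}$-sector whose radial boundary data is compatible with $\rho$, and then glue $d$ rotated copies around $O$. It then remains to verify that the glued picture satisfies the Postnikov axioms (each pair of strands crosses the prescribed number of times, strands have no illegal self-intersections, and the boundary connectivity is $i\mapsto i+a$), while $\rho$-invariance holds by construction. This yields a rotation-invariant $(a,n)$-Postnikov diagram in exactly the claimed cases.

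Finally I would deduce the concluding statement by combining the biconditional with Theorem~\ref{thm:postnikov}. Fix $d$; it suffices to produce infinitely many pairs $(a,n)$ with $n/\gcd(n,a)=d$ and $a\equiv 1\pmod d$. For every integer $g$ coprime to $d$, set $n=dg$ and choose $a$ with $g\mid a$ and $(a/g)\equiv g^{-1}\pmod d$: then $\gcd(n,a)=g$ and $a\equiv 1\pmod d$, so the biconditional furnishes a rotation-invariant $(a,n)$-Postnikov diagram. By Theorem~\ref{thm:postnikov} its QP is self-injective with Nakayama automorphism the rotation $\rho$, which has order $d$. Distinct values of $g$ give diagrams with different numbers of boundary marked points, hence non-isomorphic QPs, so for every $d$ there are infinitely many such self-injective planar QPs.
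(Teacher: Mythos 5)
First, a structural remark: the paper does not prove this statement at all --- it is quoted as a black box from \cite{PTZ18}, so there is no internal proof to compare you against, and your proposal has to stand on its own. The part of your argument that does stand is the final deduction: given the biconditional, your number-theoretic construction (for $g$ coprime to $d$, take $n=dg$ and $a=ga'$ with $a'\equiv g^{-1}\pmod d$, so that $\on{GCD}(n,a)=g$, $n/\on{GCD}(n,a)=d$ and $a\equiv 1\pmod d$) is correct, and combining it with Theorem~\ref{thm:postnikov} does yield infinitely many self-injective planar QPs with Nakayama automorphism of order $d$, since the quivers grow with $n$ and hence fall into infinitely many isomorphism classes.

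Both directions of the biconditional, however, have genuine gaps. In the ``only if'' direction your local analysis at the centre $O$ is fine as far as it goes (no strand through $O$ for $d\geq2$, no crossing at $O$ for $d\geq3$), but the crucial step --- passing from ``the region containing $O$ is $\rho$-invariant'' to the congruence on $a$ --- is asserted rather than proven; ``matching each configuration against the global shift data'' is not an argument, and you flag it yourself as the main obstacle. The missing idea is the labelling of the regions of an $(a,n)$-Postnikov diagram by subsets of $\Z/n\Z$ (as in \cite{BKM16}): each region is labelled by the set $S$ of strands having it on their left, with $|S|=a$ for alternating regions and $|S|=a\pm1$ for oriented ones; the rotation sends a region with label $S$ to one with label $S+a$, so an invariant central region satisfies $S+a=S$, forcing $S$ to be a union of cosets of the order-$d$ subgroup $\langle a\rangle\leq\Z/n\Z$, whence $d$ divides $|S|\in\{a-1,a,a+1\}$ and $a\equiv0,\pm1\pmod d$. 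Without this (or some equivalent global invariant) the congruence simply does not follow from the local tangent-direction analysis. In the ``if'' direction the gap is larger: you never exhibit a diagram. ``Extend a legal central configuration to one sector and glue $d$ rotated copies'' is a plan, not a construction --- the entire difficulty of the existence half is to show that a sector filling compatible with the Postnikov axioms (alternation along strands, no self-crossings, no parallel bigons) and with boundary connectivity $i\mapsto i+a$ exists for every admissible $a$, and this requires an explicit construction or induction, which is precisely the content of \cite{PTZ18}.
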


\begin{rmk}
  There exist self-injective planar QPs with Nakayama automorphism acting by rotation which do not come from Postnikov diagrams. For instance, the quiver of the 3-preprojective algebra of 
  type ${\rm A}_n$ (see Example~\ref{ex:triangles}) with $n$ odd.
\end{rmk}

We conclude this section by observing that 
Theorem \ref{thm:main} can be naturally applied to any self-injective QP where the Nakayama automorphism satisfies our assumptions. In this case we get:

\begin{prop}
  Let $(Q, W)$ be a self-injective QP with Nakayama automorphism $\varphi$ of finite order. Call 
  $G = \langle \varphi\rangle\subseteq \on{Aut}(\mathcal P(Q, W))$, and assume that the assumptions \ref{ass:field}-\ref{ass:cycles} are satisfied. Then $\mathcal P(Q_G, W_G)$ is symmetric.
\end{prop}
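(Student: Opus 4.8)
The plan is to assemble three results already established in the paper: Theorem~\ref{thm:main}, Corollary~\ref{cor:sym}, and Lemma~\ref{lem:basicsym}. First I would record that $\Lambda = \mathcal P(Q, W)$ is a basic algebra, being a quotient of a path algebra by an admissible ideal (as guaranteed by our standing hypotheses together with Proposition~\ref{prop:completion}). Since the QP is self-injective, $\Lambda$ is basic and self-injective, hence Frobenius; this is what ensures that the Nakayama automorphism $\varphi$ in the statement genuinely exists and is the object with respect to which $G$ is defined.

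The key observation is that, because $G = \langle \varphi\rangle$ by definition, the image $\on{im}(G)\subseteq \on{Aut}(\Lambda)$ is exactly the cyclic group generated by $\varphi$. This is precisely the hypothesis of Corollary~\ref{cor:sym}, namely that $\varphi$ generates $\on{im}(G)$. I would therefore invoke Corollary~\ref{cor:sym} directly to conclude that the skew group algebra $\Lambda G$ is symmetric. Recall that the proof of that corollary first checks that the multiplicative form on $\Lambda$ is $G$-equivariant (which follows from $\varphi$ preserving the form and every element of $G$ acting as a power of $\varphi$) and then applies Proposition~\ref{prop:nakayama} to identify $\varphi\otimes 1$ as an inner, hence trivial, Nakayama automorphism of $\Lambda G$.

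Next I would pass to the basic version. Since $\eta\in\Lambda G$ is an idempotent and $\Lambda G$ is symmetric, Lemma~\ref{lem:basicsym} applies verbatim and yields that $\eta(\Lambda G)\eta$ is symmetric. Finally, because the assumptions \ref{ass:field}-\ref{ass:cycles} are in force, Theorem~\ref{thm:main} provides an algebra isomorphism $\mathcal P(Q_G, W_G)\cong \eta(\Lambda G)\eta$; transporting the symmetric bilinear form across this isomorphism shows that $\mathcal P(Q_G, W_G)$ is symmetric, as desired.

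I would expect no genuine technical obstacle here: all the substantive work lives in Theorem~\ref{thm:main}, Corollary~\ref{cor:sym}, and Lemma~\ref{lem:basicsym}, so the proof is essentially a matter of verifying that their hypotheses hold in the present situation. The only point requiring a moment's care is confirming that $\varphi$ really generates $\on{im}(G)$ as an honest automorphism (not merely up to inner automorphisms), but this is immediate from the fact that $G$ is defined to be the subgroup of $\on{Aut}(\Lambda)$ generated by the chosen representative $\varphi$.
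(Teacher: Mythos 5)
Your proposal is correct and follows essentially the same route as the paper: Theorem~\ref{thm:main} to identify $\mathcal P(Q_G, W_G)$ with $\eta(\Lambda G)\eta$, Corollary~\ref{cor:sym} to show $\Lambda G$ is symmetric (the hypothesis that $\varphi$ generates $\on{im}(G)$ being immediate from the definition of $G$), and Lemma~\ref{lem:basicsym} to pass to the idempotent subalgebra. The only difference is that you spell out the verification of the hypotheses (basicness of $\Lambda$, Frobenius property, $G$-equivariance of the form) in more detail than the paper's two-line proof, which is fine.
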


\begin{proof}
  By Theorem \ref{thm:main}, $\mathcal P(Q_G, W_G)$ is a self-injective algebra which is Morita equivalent to $\Lambda G$. The latter is symmetric by Corollary \ref{cor:sym} using Lemma 
  \ref{lem:basicsym}.
\end{proof}

Combining this with our previous discussion, we remark that by Theorem \ref{thm:postnikov} there is a symmetric Jacobian algebra associated to every rotation-invariant Postnikov diagram.

\begin{cor}\label{prop:combining}
  If $(Q, W)$ is a self-injective QP coming from a Postnikov diagram with Nakayama automorphism $\varphi$, then $\mathcal P(Q_{\langle \varphi\rangle}, W_{\langle \varphi \rangle})$ is symmetric.
\end{cor}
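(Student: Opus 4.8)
The plan is to obtain this statement as an immediate application of the preceding Proposition, so that all the work consists in checking that its hypotheses are met for a self-injective QP arising from a Postnikov diagram. First I would invoke Theorem~\ref{thm:postnikov}: since $(Q,W)$ comes from a Postnikov diagram and is self-injective, the underlying $(a,n)$-diagram is invariant under rotation by $\frac{2\pi a}{n}$, and that very rotation provides a Nakayama automorphism $\varphi$. Being a planar rotation of finite order, $\varphi$ generates a finite cyclic group $G=\langle\varphi\rangle$ acting on the planar QP $(Q,W)$.

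Next I would verify assumptions \ref{ass:field}-\ref{ass:cycles}. The Postnikov setting forces $\kk=\C$, which contains a primitive root of unity of every order, so \ref{ass:field} is immediate. For the remaining assumptions I would argue that $G$ acts on $(Q,W)$ \emph{by rotations} in the sense defined above, and then quote the Lemma asserting that any such action satisfies \ref{ass:permuted}-\ref{ass:cycles}. The only clause of ``acts by rotations'' that is not purely formal is \ref{ass:cycles}; here I would note that a nontrivial finite-order rotation of the plane fixes a single point, so $Q$ has at most one $G$-fixed vertex. Consequently every cycle of $W$ passes through either no fixed vertex (type (i)) or exactly the unique fixed vertex, counted once (type (ii)), so that \ref{ass:cycles} holds and cycles of type (iii) or (iv) cannot occur for lack of a second fixed vertex.

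With all hypotheses in place, applying the preceding Proposition to $G=\langle\varphi\rangle$ yields that $\mathcal P(Q_{\langle\varphi\rangle},W_{\langle\varphi\rangle})=\mathcal P(Q_G,W_G)$ is symmetric, which is the claim.

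The only genuine subtlety, and hence the step I would watch most carefully, is the verification of \ref{ass:cycles}: one must ensure that when the centre of rotation happens to be a vertex of $Q$, no face of the diagram wraps around it so as to meet that vertex more than once, which would contribute multiplicity greater than one and escape the classification of types (i)--(iv). This follows from the geometry of a finite-order rotation together with the planarity of the Postnikov picture, and it is precisely where the combinatorial input of the diagram is used; everything else is a formal appeal to the results already established.
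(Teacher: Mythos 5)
Your proof is correct and follows essentially the same route as the paper: the corollary is obtained there by combining Theorem~\ref{thm:postnikov} (the rotation is a Nakayama automorphism), the lemma that a group acting by rotations on a planar QP satisfies assumptions \ref{ass:permuted}--\ref{ass:cycles}, the fact that $\kk=\C$ gives \ref{ass:field}, and the proposition immediately preceding the corollary. The only difference is that you verify assumption \ref{ass:cycles} explicitly (a nontrivial rotation fixes at most one vertex, and a face of the planar embedding meets the centre at most once, so only types (i) and (ii) occur), whereas the paper absorbs this condition into its definition of ``acting by rotations'' and asserts without further comment that rotation-invariant Postnikov QPs satisfy it---so your treatment is, if anything, slightly more careful on exactly the point you flag.
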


These results are illustrated in Example \ref{ex:postnikov}.

\section{Cuts and 2-representation finite algebras}\label{sec:cuts}

In this section we apply our construction to the study of 2-representation finite algebras. These are by definition algebras of global dimension at most 2 admitting a 
cluster tilting module, and were introduced by Iyama as a natural generalisation of hereditary representation finite algebras. We refer the interested reader to \cite{Iya08}, \cite{JK16} 
for general higher Auslander-Reiten theory, and to \cite{HI11b} for the $2$-dimensional case. For the general interaction between higher representation finiteness and skew group algebras, 
see also \cite{LM18}.

Let $(Q,W)$ be a QP. For a subset $C\subseteq Q_1$ we can define a grading $d_C$ on $Q$ by setting
$$
d_C(\alpha) = 
\begin{cases}
  1, \text{ if } \alpha\in C;\\
  0, \text{ otherwise.}
\end{cases}
$$
\begin{defin}
A subset $C\subseteq Q_1$ is called a \emph{cut} if $W$ is homogeneous of degree $1$ with respect to $d_C$.
\end{defin}

Note that a cut induces a grading on the Jacobian algebra $\cal P(Q,W)$. We call its degree $0$ part a \emph{truncated Jacobian algebra} and denote it by $\cal P(Q,W)_C$.

Our interest in truncated Jacobian algebras lies in the following result.

\begin{thm}[{\cite[Theorem 3.11]{HI11b}}]\label{thm:HI}
If $(Q,W)$ is a self-injective QP and $C$ is a cut, then $\cal P(Q,W)_C$ is $2$-representation finite. Moreover, every basic $2$-representation finite algebra is obtained in this way.
\end{thm}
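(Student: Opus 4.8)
The plan is to prove this through the theory of higher preprojective algebras, following Iyama's higher Auslander--Reiten theory. The central object is the \emph{$3$-preprojective algebra}: for an algebra $\Gamma$ with $\on{gldim}\Gamma\leq 2$ one sets $\Pi_3(\Gamma)=T_\Gamma\on{Ext}^2_\Gamma(D\Gamma,\Gamma)$, the tensor algebra over $\Gamma$ of the bimodule $\on{Ext}^2_\Gamma(D\Gamma,\Gamma)$, which carries a natural $\Z$-grading with $\Gamma$ in degree $0$ and $\on{Ext}^2_\Gamma(D\Gamma,\Gamma)$ in degree $1$. The bridge to quivers with potential is Keller's theorem that $\Pi_3(\Gamma)$ is the $0$-th cohomology of a Ginzburg dg algebra, hence a Jacobian algebra $\cal P(Q,W)$ whenever it is finite dimensional; under this identification the tensor grading is precisely the grading $d_C$ coming from the cut $C$ of degree-$1$ arrows, so that $\cal P(Q,W)_C$ recovers $\Gamma$. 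The whole statement then rests on the homological characterisation that $\Gamma$ is $2$-representation finite if and only if $\Pi_3(\Gamma)$ is finite dimensional and self-injective.

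For the first assertion I would start from a self-injective $\Lambda=\cal P(Q,W)$ together with a cut $C$, and set $\Gamma=\Lambda_C$. The first step is to verify $\on{gldim}\Gamma\leq 2$: the relations of $\Gamma$ are the cyclic derivatives $\partial_\alpha W$ for $\alpha\in C$, and from the potential one builds, for every simple $\Gamma$-module, an explicit minimal projective resolution of length $2$ in which the derivatives $\partial_\beta W$ with $\beta\notin C$ supply the second syzygies. The second step is to identify $\Lambda$ with $\Pi_3(\Gamma)$ as graded algebras: here the Frobenius structure on $\Lambda$ provides the nondegenerate pairing identifying the degree-$1$ component $\Lambda_1$ (with respect to $d_C$) with $\on{Ext}^2_\Gamma(D\Gamma,\Gamma)$, and matches the relations coming from $W$ with the tensor-algebra relations. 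Once $\Lambda\cong\Pi_3(\Gamma)$ is established, self-injectivity of $\Lambda$ feeds into the characterisation above and forces $\Gamma$ to be $2$-representation finite.

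For the converse I would run the correspondence backwards. Given a basic $2$-representation finite algebra $\Gamma$, the characterisation guarantees that $\Pi_3(\Gamma)$ is finite dimensional and self-injective; by Keller's theorem $\Pi_3(\Gamma)\cong\cal P(Q,W)$ for some QP $(Q,W)$, which is therefore self-injective, and the tensor grading exhibits the degree-$1$ arrows as a cut $C$ with $\cal P(Q,W)_C\cong\Gamma$. This shows every basic $2$-representation finite algebra arises as a truncated Jacobian algebra of a self-injective QP, completing both directions.

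The main obstacle is the homological equivalence ``$\Gamma$ is $2$-representation finite $\iff$ $\Pi_3(\Gamma)$ is self-injective,'' which is the genuinely deep input: it relies on higher Auslander--Reiten theory, tying the existence of a $2$-cluster-tilting module and the finiteness of the orbit category under the Serre functor to the periodicity and self-injectivity of the preprojective algebra. The remaining technical hurdle is the explicit identification $\Lambda\cong\Pi_3(\Lambda_C)$, where matching the cyclic derivatives of $W$ with the tensor relations, and in particular checking that the Frobenius form induces the correct $\Gamma$-bimodule isomorphism $\Lambda_1\cong\on{Ext}^2_\Gamma(D\Gamma,\Gamma)$, requires a careful dimension count and compatibility check that I expect to be the most computation-heavy part.
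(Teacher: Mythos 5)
This theorem is not proved in the paper at all: it is imported verbatim from \cite[Theorem 3.11]{HI11b} and used as a black box in Section~\ref{sec:cuts}, so your proposal can only be measured against the proof in that reference. Your outline does reproduce its architecture faithfully: the $3$-preprojective algebra $\Pi_3(\Gamma)=T_\Gamma\on{Ext}^2_\Gamma(D\Gamma,\Gamma)$, Keller's theorem realising $\Pi_3(\Gamma)$, for $\on{gldim}\Gamma\leq 2$, as the Jacobian algebra of a QP whose new arrows (one for each minimal relation) form a cut with truncation $\Gamma$, and the characterisation that such a $\Gamma$ is $2$-representation finite if and only if $\Pi_3(\Gamma)$ is finite dimensional and self-injective. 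Granting these inputs, your converse direction is essentially the argument of \cite{HI11b}.

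The forward direction, however, has a genuine gap. You claim that for an arbitrary cut $C$ one can verify $\on{gldim}\cal P(Q,W)_C\leq 2$ by writing down explicit length-$2$ projective resolutions of the simples, and then identify $\Lambda\cong\Pi_3(\Lambda_C)$ via the Frobenius form, with self-injectivity of $\Lambda$ invoked only at the very last step. Both intermediate claims are false for general QPs with cuts, so self-injectivity must enter earlier. A concrete counterexample: let $Q$ have vertices $1,2$ and arrows $a\colon 1\to2$, $b,c\colon 2\to1$, with $W=baca$. Then $C=\{b\}$ is a cut, $\cal P(Q,W)$ is finite dimensional with admissible Jacobian ideal, and the truncation is $\Gamma=\kk Q_C/(aca)$ with $Q_C$ the quiver $1\rightleftarrows 2$; here one computes $\Omega^3S_1\cong\Omega^2S_1\neq0$, so $\on{gldim}\Gamma=\infty$. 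The three-term complex you describe always exists, but its exactness is precisely what is at stake. In \cite{HI11b} this is the substantive content of the first assertion: one must show that every cut of a \emph{self-injective} QP is algebraic in their sense (the truncation has global dimension at most $2$ and $\{\partial_\alpha W\mid\alpha\in C\}$ is a minimal set of relations), and it is in that step that the graded self-injective structure of $\Lambda$ does its work; only afterwards does Keller's theorem yield $\Lambda\cong\Pi_3(\Lambda_C)$ and the characterisation conclude. There is also a smaller slip: the second syzygies of the simple $\Lambda_C$-modules are indexed by the relations $\partial_\alpha W$ with $\alpha\in C$ (the cut arrows), not by the derivatives $\partial_\beta W$ with $\beta\notin C$ — the latter are homogeneous of degree $1$ and are not relations of $\Lambda_C$ at all.
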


Now assume that a finite cyclic group $G$ acts on $\cal P(Q, W)$ satisfying the assumptions \ref{ass:field}-\ref{ass:cycles}. 
We want to understand when a cut in $(Q_G,W_G)$ can be induced from one in $(Q,W)$.
We call a cut in $(Q, W)$ invariant under the $*$ action of $G$ a \emph{$G$-invariant cut}. 

\begin{prop}
  \label{prop:cut}
Let $C$ be a $G$-invariant cut in $(Q,W)$. Then the subset $C_G=C_1\cup C_2\cup C_3\cup C_4$ of $(Q_G)_1$ defined by
$$C_1 = \{\tilde\alpha \,|\, \alpha\in C \text{ of type }(1)\},\quad C_x = \{\tilde\alpha^\mu \,|\, \alpha\in C \text{ of type ($x$), } 0\leq\mu\leq n-1\},\; x=2,3,4,$$
is a cut in $(Q_G,W_G)$.
\end{prop}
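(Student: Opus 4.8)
The plan is to check directly that $W_G$ is homogeneous of degree $1$ with respect to the grading $d_{C_G}$, which is precisely what it means for $C_G$ to be a cut. Since $W_G$ is, by Notation~\ref{not:wg}, a linear combination of the cycles $\tilde c$ (for $c$ of type (i)) and $\tilde c^\mu$ (for $c$ of type (ii), (iii), (iv) and $\mu=0,\dots,n-1$), it suffices to show that each such cycle contains exactly one arrow of $C_G$, counted with multiplicity.

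First I would record two elementary bookkeeping facts. By the definition of $C_G$ in the statement, a factor $\tilde\alpha$ or $\tilde\alpha^\nu$ lies in $C_G$ if and only if the underlying arrow $\alpha$ of type (1), (2), (3) or (4) lies in $C$, and this is independent of the superscript $\nu$. On the other hand, since $C$ is invariant under the $*$ action of $G$, for every integer $s$ and every arrow $\beta$ of $Q$ we have $g^s*\beta\in C$ if and only if $\beta\in C$.

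Next I would use Notation~\ref{not:cmu} to match the arrows of a chosen representative $\hat c$ with the factors of $\tilde c$, respectively $\tilde c^\mu$. Inspecting the four cases there, each arrow of $\hat c$ is of the form $g^s*\beta$ for some power $s$ and some standard arrow $\beta$ of type (1), (2), (3) or (4), and the corresponding factor of $\tilde c$, respectively $\tilde c^\mu$, is exactly $\tilde\beta$, respectively $\tilde\beta^{(\cdot)}$. Combining the two facts above, an arrow $g^s*\beta$ of $\hat c$ lies in $C$ if and only if the matching factor of $\tilde c$, respectively $\tilde c^\mu$, lies in $C_G$. Hence the number of factors of $\tilde c$, respectively $\tilde c^\mu$, lying in $C_G$ equals the number of arrows of $\hat c$ lying in $C$.

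Finally I would invoke that $C$ is a cut: every cycle $c$ appearing in $W$ contains exactly one arrow of $C$, and since $\hat c\in G*c$ and $C$ is $G$-invariant, the same holds for $\hat c$. By the previous step, $\tilde c$, respectively each $\tilde c^\mu$, then contains exactly one factor in $C_G$, so $W_G$ is homogeneous of degree $1$ with respect to $d_{C_G}$, as required. The one place demanding care---and the main, if mild, obstacle---is the explicit matching in cases (ii) and (iii), where the standard representative is obtained from an arrow of $\hat c$ by a twist $g^{-t_2}$ together with a change of type; once this is read off Notation~\ref{not:cmu} the degree count is immediate.
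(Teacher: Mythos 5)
Your proof is correct and takes essentially the same route as the paper: the paper verifies, case by case for the four cycle types of Notation~\ref{not:cmu}, that $d_{C_G}(\tilde c)$ (resp.\ $d_{C_G}(\tilde c^\mu)$) equals $d_C(c)=1$, using exactly your two observations---membership of $\tilde\alpha$ or $\tilde\alpha^\nu$ in $C_G$ is independent of the superscript, and $G$-invariance of $C$ absorbs the twists $g^s$ (which, for the record, occur only in cases (i) and (ii); in case (iii) all $t_i=0$). Your formulation merely packages the paper's four explicit computations into one uniform matching argument, so there is no substantive difference.
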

\begin{proof}
In order to show that $C_G$ is a cut in $(Q_G,W_G)$, we shall prove that every cycle in $W_G$ has degree $1$ with respect to $d_{C_G}$. Thus we have four different cases to consider.
\begin{enumerate}[label=(\roman*)]
  \item Let $c\in\cal C({\rm i})$, so $c=\alpha_1 g^{t_1}(\alpha_2) \cdots g^{t_1+\dots+t_{l-1}}(\alpha_l)$ for some arrows $\alpha_i\in Q_1$ of type (1). Then $W_G$ contains the cycle $\tilde c=\tilde\alpha_1\cdots\tilde\alpha_l$ and, since $C$ is $G$-invariant, we have
  $$d_{C_G}(\tilde c) = \sum_{i=1}^l d_{C_G}(\tilde\alpha_i) = \sum_{i=1}^l d_C(\alpha_i) = \sum_{i=1}^l d_C(g^{t_1+\dots+t_{i-1}}(\alpha_i)) = d_C(c) = 1.$$
  \item Let $c\in\cal C({\rm ii})$, so $c=\alpha_1 \alpha_2 g^{t_2}(\alpha_3) \cdots g^{t_2+\dots+t_{l-1}}(\alpha_l)$ for $\alpha_1$ of type (3), $\alpha_2$ of type (2) and $\alpha_3,\dots,\alpha_l$ of type (1). For each $\mu=0,\dots,n-1$ we have a cycle $\tilde c^\mu = \tilde\alpha_1^\mu \widetilde{g^{-t_2}(\alpha_2)}^\mu\tilde\alpha_3\cdots \tilde\alpha_l$ in $W_G$ and
  $$d_{C_G}(\tilde c^\mu) = d_{C_G}(\tilde\alpha_1^\mu) + d_{C_G}(\widetilde{g^{-t_2}(\alpha_2)}^\mu) + \sum_{i=3}^l d_{C_G}(\tilde\alpha_i) = \sum_{i=1}^l d_C(\alpha_i) = \sum_{i=1}^l d_C(g^{t_1+\dots+t_{i-1}}(\alpha_i)) = d_C(c) = 1.$$
\item Let $c\in\cal C({\rm iii})$, so $c=\alpha_1 \alpha_2 \cdots \alpha_l$ for $\alpha_1$ of type (2), $\alpha_2$ of type (3) and $\alpha_3,\dots,\alpha_l$ of type (4). For each $\mu=0,\dots,n-1$ we have a cycle $\tilde c^\mu = \tilde{\alpha}_1^{\mu} \tilde{\alpha}_2^{\mu-b_3} \cdots \tilde{\alpha}_{l-1}^{\mu-b_l} \tilde{\alpha}_l^\mu$ in $W_G$, where \mbox{$b_i=b(\alpha_i)+\dots+b(\alpha_l)$}. Hence
  $$d_{C_G}(\tilde c^\mu) = d_{C_G}(\tilde\alpha_1^\mu) + \sum_{i=2}^l d_{C_G}(\tilde\alpha_i^{\mu-b_{i+1}}) = \sum_{i=1}^l d_C(\alpha_i) = d_C(c) = 1.$$
  \item Let $c\in\cal C({\rm iv})$, so $c=\alpha_1 \alpha_2 \cdots \alpha_l$ for $\alpha_i$ of type (4). For each $\mu=0,\dots,n-1$ we have a cycle $\tilde c^\mu = \tilde{\alpha}_1^{\mu-b_2} \tilde{\alpha}_2^{\mu-b_3} \cdots \tilde{\alpha}_{l-1}^{\mu-b_l} \tilde{\alpha}_l^\mu$ in $W_G$, where $b_i=b(\alpha_i)+\dots+b(\alpha_l)$. Hence
    \begin{equation*}
      d_{C_G}(\tilde c^\mu) = \sum_{i=1}^l d_{C_G}(\tilde\alpha_i^{\mu-b_{i+1}}) = \sum_{i=1}^l d_C(\alpha_i) = d_C(c) = 1.\qedhere
    \end{equation*}
\end{enumerate}
\end{proof}

Observe that from \cite[Corollary~1.6(1)]{LM18}, 2-representation finiteness is preserved by taking skew group algebras. Thus it follows from Theorem \ref{thm:HI} that the property of being a truncated
Jacobian algebra is also preserved. In our setting, the corresponding cut on $(Q_G, W_G)$ is precisely $C_G$:

\begin{prop}\label{prop:cut_truncated}
Let $C$ be a $G$-invariant cut in $(Q,W)$ and let $C_G$ be the cut constructed in Proposition~\ref{prop:cut}. Then the action of $G$ on $\cal P(Q,W)$ restricts to an action on $\cal P(Q,W)_C$, and the skew group algebra $(\cal P(Q,W)_C) G$ is Morita equivalent to $\cal P(Q_G,W_G)_{C_G}$.
\end{prop}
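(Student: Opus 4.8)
The plan is to promote the isomorphism of Theorem~\ref{thm:main} to an isomorphism of \emph{graded} algebras, where the gradings are those induced by the cuts $C$ and $C_G$, and then to take degree-zero parts. Write $\Lambda=\cal P(Q,W)$ and let $d_C$, $d_{C_G}$ be the associated gradings. First I would check that $G$ preserves the grading $d_C$ on $\Lambda$. Since $C$ is $G$-invariant and, by assumptions \ref{ass:permuted}--\ref{ass:arrows}, $g(\alpha)$ is a scalar multiple of $g*\alpha$ for every arrow $\alpha$, we have $\alpha\in C$ if and only if $g*\alpha\in C$, so $d_C(g(\alpha))=d_C(\alpha)$. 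Hence every homogeneous component $\Lambda_i$ is $G$-stable; in particular $G$ acts on $\Lambda_0=\cal P(Q,W)_C$, which proves the first assertion.

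Next I would grade $\Lambda G$ by declaring $\lambda\otimes g$ homogeneous of degree $d_C(\lambda)$. This is multiplicative because $g$ preserves the grading: $(\lambda\otimes g)(\mu\otimes h)=\lambda g(\mu)\otimes gh$ and $d_C(\lambda g(\mu))=d_C(\lambda)+d_C(\mu)$. Its degree-zero part is $(\Lambda G)_0=\Lambda_0 G=(\cal P(Q,W)_C)G$. Since $\eta=\e\otimes1$ is a sum of vertices, it is homogeneous of degree $0$, so $\eta(\Lambda G)\eta$ is a graded subalgebra with $\bigl(\eta(\Lambda G)\eta\bigr)_0=\eta(\Lambda_0 G)\eta=\eta\bigl((\cal P(Q,W)_C)G\bigr)\eta$.

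The crux is to verify that the isomorphism $\cal P(Q_G,W_G)\cong\eta(\Lambda G)\eta$ of Theorem~\ref{thm:main} is graded. Since both $\kk Q_G$ and $\eta((\kk Q)G)\eta$ are generated in degrees $0$ and $1$, and the defining ideals are generated by the homogeneous cyclic derivatives of the degree-$1$ potentials $W$ and $W_G$ (where $W_G$ is homogeneous of degree $1$ by Proposition~\ref{prop:cut}), it suffices to check gradedness of $J$ on the arrows $\gamma$ of $Q_G$. Running through the four types of Notation~\ref{not:arrowtypes}: for $\tilde\alpha=\alpha\otimes g^{t(\alpha)}$ of type (1) the image has $\Lambda G$-degree $d_C(\alpha)$, while for the remaining types $\tilde\alpha^\mu$ is a linear combination of terms $g^i(\alpha)\otimes g^i$, each of $\Lambda G$-degree $d_C(g^i(\alpha))=d_C(\alpha)$. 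In every case this matches the $C_G$-degree of $\gamma$, which equals $1$ precisely when $\alpha\in C$ by the definition of $C_G$. Thus $J$ is graded, and passing to degree-zero parts yields $\cal P(Q_G,W_G)_{C_G}\cong\eta\bigl((\cal P(Q,W)_C)G\bigr)\eta$.

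Finally, $\eta=\e\otimes1$ is the idempotent attached to a set $\cal E$ of $G$-orbit representatives of the vertices of $Q$, and these are exactly the vertices of the quiver of $\cal P(Q,W)_C$ carrying the same $G$-action. Hence by \cite[\S2.3]{RR85}, applied now to $\cal P(Q,W)_C$ in place of $\Lambda$, the idempotent $\eta$ is full in $(\cal P(Q,W)_C)G$, so $\eta\bigl((\cal P(Q,W)_C)G\bigr)\eta$ is Morita equivalent to $(\cal P(Q,W)_C)G$, and the statement follows. I expect the gradedness of the Theorem~\ref{thm:main} isomorphism to be the main obstacle: once the two gradings are shown to correspond under $J$ on arrows, the rest is a matter of extracting degree-zero parts and reapplying the Reiten--Riedtmann Morita reduction to the truncated algebra.
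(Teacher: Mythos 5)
Your proof is correct and follows essentially the same route as the paper's: grade $\Lambda G$ by declaring $x\otimes h$ homogeneous of degree $d_C(x)$, identify $(\eta(\Lambda G)\eta)_0$ with $\eta(\Lambda_0 G)\eta$, and check that the isomorphism of Theorem~\ref{thm:main} is graded by using that both algebras are generated in degrees $0$ and $1$ together with the definition of $C_G$. The only difference is one of detail: you spell out the degree check on the four arrow types and the final fullness of $\eta$ in $(\mathcal P(Q,W)_C)G$, both of which the paper treats as immediate.
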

\begin{proof}
  Call $\Lambda=\cal P(Q,W)$ and let $\Lambda_0$ be its degree $0$ part with respect to the grading $d_C$, so \mbox{$\Lambda_0=\cal P(Q,W)_C$}. The fact that $C$ is $G$-invariant implies that $G$ preserves the grading, so the first assertion holds.

Now note that we can define a grading on $\Lambda G$ by assigning degree $d_C(x)$ to $x\otimes h$ for all $h\in G$ and all homogeneous elements $x\in\Lambda$. Moreover this induces a grading on $\eta(\Lambda G)\eta$ and we have that $(\eta(\Lambda G) \eta)_0 = \eta(\Lambda_0 G)\eta$.
Hence, in order to prove the claim, it is enough to show that the grading on $\eta(\Lambda G)\eta$ coincides with the grading $d_{C_G}$ on $\cal P(Q_G,W_G)$ under the isomorphism $\eta(\Lambda G)\eta \cong \cal P(Q_G,W_G)$. But this follows immediately from the definition of $C_G$, since both algebras are generated in degree $0$ and $1$ and the elements of degree $1$ in $\eta(\Lambda G)\eta$ are exactly the ones given by $C_G$.
\end{proof}


Let $(Q,W)$ be a self-injective QP with a group $G$ acting as per the assumptions \ref{ass:field}-\ref{ass:cycles}. Then $(Q_G, W_G)$ is self-injective, so its truncated Jacobian algebras are 2-representation finite.
In the spirit of \cite[\S7]{HI11b}, we will give sufficient conditions on $(Q, W)$ for the truncated Jacobian algebras of $(Q_G, W_G)$ to be derived equivalent to each other.

In the following discussion we do not need to assume self-injectivity.

\begin{defin}
  We say that $(Q, W)$ \emph{has enough cuts} if every arrow of $Q$ is contained in a cut.
  We say that $(Q, W)$ \emph{has enough $G$-invariant cuts} if every arrow of $Q$ is
  contained in a $G$-invariant cut (cf.~\cite[Definition 7.4]{HI11b}). 
\end{defin}

\begin{lemma}
  \label{lem:enough}
  If $(Q,W)$ has enough $G$-invariant cuts, then $(Q_G, W_G)$ has enough cuts.
\end{lemma}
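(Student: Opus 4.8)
The plan is to reduce the statement directly to Proposition~\ref{prop:cut}, which already manufactures a cut $C_G$ on $(Q_G, W_G)$ out of any $G$-invariant cut $C$ on $(Q,W)$. So I would begin by fixing an arbitrary arrow $\gamma$ of $Q_G$ and recalling from Notation~\ref{not:arrowtypes} that $\gamma$ has exactly one of the forms $\tilde\alpha$ (with $\alpha$ of type (1)) or $\tilde\alpha^\mu$ for some $0\leq\mu\leq n-1$ (with $\alpha$ of type (2), (3), or (4)). In all cases there is a uniquely determined arrow $\alpha$ of $Q$, of one of the four types, attached to $\gamma$.

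Next I would invoke the hypothesis. Since $(Q,W)$ has enough $G$-invariant cuts, this arrow $\alpha$ is contained in some $G$-invariant cut $C$ of $(Q,W)$. Because $C$ is $G$-invariant, Proposition~\ref{prop:cut} applies to it and produces a cut $C_G = C_1\cup C_2\cup C_3\cup C_4$ of $(Q_G, W_G)$.

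The final step is to observe that $\gamma\in C_G$, which is immediate from the definition of $C_G$: if $\gamma = \tilde\alpha$ with $\alpha\in C$ of type (1), then $\gamma\in C_1$; and if $\gamma = \tilde\alpha^\mu$ with $\alpha\in C$ of type $x\in\{2,3,4\}$, then $\gamma\in C_x$, since $C_x$ contains $\tilde\alpha^\mu$ for \emph{every} value of $\mu$ and in particular for ours. As $\gamma$ was an arbitrary arrow of $Q_G$, this shows that every arrow of $Q_G$ lies in a cut, which is precisely the assertion that $(Q_G, W_G)$ has enough cuts.

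I do not expect a genuine obstacle here: the substantive work is entirely carried out in Proposition~\ref{prop:cut}, and what remains is merely the bookkeeping of the arrow-type correspondence of Notation~\ref{not:arrowtypes}. The one point deserving a moment of care is that the cut $C$ chosen to contain $\alpha$ must be $G$-invariant, so that Proposition~\ref{prop:cut} is applicable; this is exactly what the hypothesis of having enough $G$-invariant cuts guarantees, and is the reason the weaker hypothesis of merely having enough cuts would not suffice.
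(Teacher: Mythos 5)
Your proof is correct and is essentially identical to the paper's own argument: both reduce the claim to Proposition~\ref{prop:cut} by writing an arbitrary arrow of $Q_G$ as $\tilde\alpha$ or $\tilde\alpha^\mu$, choosing a $G$-invariant cut $C$ containing the underlying arrow $\alpha$, and noting that the induced cut $C_G$ contains the given arrow by construction. No gap; the extra remark about why $G$-invariance of $C$ is needed is a sound observation, not a divergence.
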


\begin{proof}
  Let $\beta\in (Q_G)_1$, so $\beta=\tilde\alpha$ or $\beta=\tilde\alpha^\mu$ for some $\alpha\in Q_1$. Let $C$ be a $G$-invariant cut in $(Q,W)$ containing $\alpha$, then the cut $C_G$ in $(Q_G, W_G)$ constructed in Proposition \ref{prop:cut} contains $\beta$.
\end{proof}

To use the results of \cite{HI11b}, we need to study the topology of the canvas of $(Q_G, W_G)$. 
We will do this in the case of $G$ acting by rotations on a strongly planar QP.

\begin{prop}
  \label{prop:simcon}
  Let $(Q, W)$ be a strongly planar QP with a group $G$ acting by rotations, and assume that there is a vertex of $Q$ fixed by $G$. Then $X_{(Q_G, W_G)}$ is simply connected.
\end{prop}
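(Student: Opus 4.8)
The plan is to exploit that strong planarity gives $X_{(Q,W)} \cong D^2$, and to build $X_{(Q_G,W_G)}$ from this disk by a partial-quotient construction, treating separately the region far from the fixed vertex (where $G$ acts freely) and the region around it. First I would record the geometric input: a nontrivial rotation of the plane has a unique fixed point, its centre, so the hypothesis that some vertex $v_0$ is fixed forces $v_0$ to be that centre and to be the only $G$-fixed point; consequently $G$ acts freely on $X_{(Q,W)} \setminus \{v_0\}$, and every cell other than $v_0$ lies in a free $G$-orbit of size $n$ (this also re-derives assumption \ref{ass:cardinality} in this setting). I would note at the outset that $Q_G$ is connected, which follows from connectedness of $Q$ together with the description of $Q_G$ in Section~\ref{sec:quiver}, so $X_{(Q_G,W_G)}$ is connected and it suffices to kill $\pi_1$.

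Next comes the decomposition. Let $N$ be the closed star of $v_0$ in $X_{(Q,W)}$, a $G$-invariant disk, and let $A = \overline{X_{(Q,W)} \setminus N}$, an annulus on which $G$ acts freely. Inspecting the constructions of $Q_G$ and $W_G$, the cells of $X_{(Q_G,W_G)}$ not meeting the split central vertices $\eta^{v_0}_\mu$ are exactly the images of the type-(1) arrows and type-(i) cycles, that is, of the cells of $A$, and each appears with multiplicity one per $G$-orbit; hence the outer part of $X_{(Q_G,W_G)}$ is canonically $A/G$. Since $A$ is an annulus with free $\mathbb{Z}/n$-action, $A/G$ is again an annulus, so $\pi_1(A/G) \cong \mathbb{Z}$, generated by the inner boundary circle $C = \partial N/G$. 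The remaining central part $\Delta$ consists of the vertices $\eta^{v_0}_\mu$, the spokes $\tilde\alpha^\mu$ from the type-(2) and type-(3) arrows at $v_0$, and the cells $\tilde c^\mu$ from the type-(ii) cycles through $v_0$; it is glued to $A/G$ along $C$, so by van Kampen $\pi_1(X_{(Q_G,W_G)}) = \pi_1(A/G) *_{\pi_1(C)} \pi_1(\Delta)$.

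It then remains to show that $\Delta$ is simply connected and that the inclusion $C \hookrightarrow \Delta$ kills the generator of $\pi_1(C) \cong \mathbb{Z}$; I would deduce both from the claim that the central $2$-cells $\tilde c^\mu$ fit together, along their spokes, into a simply connected region capping $C$. Concretely, in $X_{(Q,W)}$ the loop $\partial N$ equals, in the fundamental group of the $1$-skeleton of $N$, the product of the boundaries of the faces at $v_0$, the interior spokes telescoping in pairs; I would establish the analogous identity inside $\Delta$, expressing $C$ as a product of the boundary loops $\partial\tilde c^\mu$. Since each $\partial\tilde c^\mu$ bounds a $2$-cell of $X_{(Q_G,W_G)}$, this exhibits $C$ as nullhomotopic, and together with a check that these cells leave no uncancelled $1$-cycles it shows $\Delta$ simply connected. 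Feeding $\pi_1(C)\to\pi_1(\Delta)$ trivial and $\pi_1(A/G)\cong\mathbb{Z}$ generated by $C$ into van Kampen yields $\pi_1(X_{(Q_G,W_G)}) = \mathbb{Z} *_{\mathbb{Z}} 1 = 1$.

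The main obstacle is precisely this telescoping step. Unlike the outer cells, the arrows $\tilde\alpha^\mu$ and cycles $\tilde c^\mu$ are not literal sub-cells of $N$ but representation-theoretic ($\mu$-indexed) combinations over the $G$-orbits at $v_0$, so $\Delta$ is not a subcomplex of $N$ and its gluing is governed by the exponents $t(\alpha)$, $b(\alpha)$ and $p(c)$. The work is to verify that, as one traverses $C$, the spokes $\tilde\alpha_1^\mu$ and $\widetilde{g^{-t_2}(\alpha_2)}^\mu$ bounding the faces $\tilde c^\mu$ cancel in matching pairs and the indices $\mu$ line up, so that the faces close up around the $n$ split vertices rather than producing extra boundary or disconnection. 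This is a finite combinatorial check of the same flavour as the index bookkeeping in Lemma~\ref{lem:1234}, and the existence of $v_0$ is exactly what makes $A$ an annulus (a finite group cannot act freely on a disk), so that this single central analysis suffices.
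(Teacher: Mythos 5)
Your proposal is correct and follows essentially the same route as the paper's proof: the same decomposition of the canvas into the star of the fixed vertex and its complement (on which $G$ acts freely), the same identification of the outer part of $X_{(Q_G,W_G)}$ with the $G$-quotient of the outer part of $X_{(Q,W)}$, and the same key bookkeeping fact that each type-(ii) face $\tilde c^\mu$ enters and leaves the \emph{same} split vertex $\Omega^\mu$, so that for each fixed $\mu$ these faces close up into a disk capping the common boundary circle. The only real difference is how the pieces are assembled at the end — the paper uses a $G$-equivariant deformation retraction onto the central subcomplex and identifies it as $n$ disks glued along their boundary (a bouquet of spheres), while you run van Kampen against the quotient annulus — but both assemblies rest on the identical central-disk observation, so this is a cosmetic rather than substantive variation.
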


\begin{proof}
   Let us decompose $X_{(Q, W)}  = \mathcal U\cup \mathcal V$, where $\mathcal V$ is the subcomplex consisting of all the faces adjacent to the central vertex, and $\mathcal U$ 
  is the subcomplex consisting of the other faces.
  Since $(Q, W)$ is strongly planar, $X_{(Q, W)}$ is homeomorphic to a disk. 
Note that if $G$ is trivial, then the statement is immediate. Otherwise, this implies that the central vertex $\Omega$ has a neighbourhood in $X_{(Q, W)}$ which is itself homeomorphic to a disk. So $\mathcal V$ is homeomorphic to a disk as well.
  Thus $\mathcal V$ looks as in Figure \ref{fig:V}, where
  $\alpha_i, \beta_i$ are arrows, $\gamma_i, \delta_i$ are paths, and all cycles $\alpha_i\gamma_i\beta_i$, $\alpha_{i+1}\delta_i\beta_i$, and 
  $\alpha_1\delta_l\beta_l$ bound faces. The action of a generator $g$ of $G$ is given by adding $a$ to indices.
  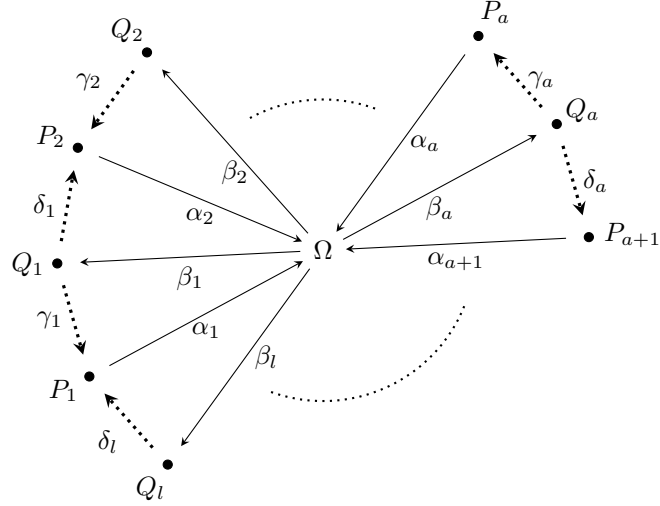
\begin{figure}[h]
\[
\begin{tikzpicture}[baseline=(bb.base),scale = 1,
    quivarrow/.style ={black, ->, >=stealth, shorten <= 3mm , shorten >= 3mm},
    cutarrow/.style = {black, -latex,very thick, dotted},
    patharrow/.style = {black,->,>= stealth, dotted, very thick, shorten <= 3mm , shorten >= 3mm}
  ]

\newcommand{\bstart}{260} 
\newcommand{\nth}{360/14} 
\newcommand{\radius}{3.5cm} 
\newcommand{\eps}{11pt} 
\newcommand{\dotrad}{0.06cm} 

\path (0,0) node (bb) {};

\foreach \n in {1,2}
{ \coordinate (p\n) at (\bstart-\nth*\n*2:\radius);
  \draw (\bstart-\nth*\n*2:\radius+\eps) node {$P_\n$}; }
\foreach \n in {1,2}
{ \coordinate (q\n) at (\bstart-\nth*\n*2-\nth:\radius);
  \draw (\bstart-\nth*\n*2-\nth:\radius+\eps) node {$Q_\n$}; }
 \coordinate (p3) at (\bstart-4*2*\nth:\radius);
  \draw (\bstart-\nth*2*4:\radius+\eps) node {$P_a$}; 
 \coordinate (q3) at (\bstart-4*2*\nth-\nth:\radius);
  \draw (\bstart-\nth*2*4-\nth:\radius+\eps) node {$Q_a$}; 
 \coordinate (p4) at (\bstart-5*2*\nth:\radius);
 \draw (\bstart-\nth*2*5:\radius+1.5*\eps) node {$P_{a+1}$}; 
 \coordinate (q4) at (\bstart-\nth:\radius);
 \draw (\bstart-\nth:\radius+\eps) node {$Q_{l}$}; 
 \coordinate (o) at (0,0);
 \draw (0,0) node {$\Omega$};
  
\foreach \n in {1,2,3,4} {\draw (p\n) circle(\dotrad) [fill=black];
\draw (q\n) circle(\dotrad) [fill=black];
}


\draw [patharrow] (q1) to node[midway,left]{$\gamma_1$} (p1);
\draw [patharrow] (q1) to node[midway,left]{$\delta_1$} (p2);
\draw [patharrow] (q2) to node[midway,above left]{$\gamma_2$} (p2);
\draw [patharrow] (q4) to node[midway,below left]{$\delta_l$} (p1);
\draw [patharrow] (q3) to node[midway,right]{$\delta_a$} (p4);
\draw [patharrow] (q3) to node[midway,right]{$\gamma_a$} (p3);
\draw [quivarrow] (p1) to node[midway,below]{$\alpha_1$} (o);
\draw [quivarrow] (p2) to node[midway,below]{$\alpha_2$} (o);
\draw [quivarrow] (p3) to node[midway,right]{$\alpha_a$} (o);
\draw [quivarrow] (p4) to node[midway,below]{$\alpha_{a+1}$} (o);
\draw [quivarrow] (o) to node[midway,below]{$\beta_1$} (q1);
\draw [quivarrow] (o) to node[midway,below]{$\beta_2$} (q2);
\draw [quivarrow] (o) to node[midway,below]{$\beta_a$} (q3);
\draw [quivarrow] (o) to node[midway,right]{$\beta_l$} (q4);


\draw [black,thick, dotted,domain=70:120] plot ({2*cos(\x)}, {2*sin(\x)});
\draw [black,thick, dotted,domain=250:340] plot ({2*cos(\x)}, {2*sin(\x)});
 \end{tikzpicture}
\]
\caption{The subcomplex $\mathcal V$ of $X_{(Q, W)}$.}
\label{fig:V}
\end{figure}
\begin{figure}[h]
\[
\begin{tikzpicture}[baseline=(bb.base),
      quivarrow/.style ={black, ->, >=stealth, shorten <= 3mm , shorten >= 3mm},
    cutarrow/.style = {black, -latex,very thick, dotted},
    patharrow/.style = {black,->,>= stealth, dotted, very thick, shorten <= 3mm , shorten >= 3mm}
  ]

\newcommand{\bstart}{250} 
\newcommand{\nth}{360/10} 
\newcommand{\radius}{4.5cm} 
\newcommand{\eps}{11pt} 
\newcommand{\dotrad}{0.06cm} 

\path (0,0) node (bb) {};

\foreach \n in {1,2}
{ \coordinate (p\n) at (\bstart-\nth*\n*2:\radius);
  \draw (\bstart-\nth*\n*2:\radius+\eps) node {$P_\n$}; }
\foreach \n in {1,2}
{ \coordinate (q\n) at (\bstart-\nth*\n*2-\nth:\radius);
  \draw (\bstart-\nth*\n*2-\nth:\radius+\eps) node {$Q_\n$}; }
 \coordinate (p3) at (\bstart-4*2*\nth:\radius);
 \draw (\bstart-\nth*2*4:\radius+1.5*\eps) node {$P_{a-1}$}; 
 \coordinate (q3) at (\bstart-4*2*\nth-\nth:\radius);
 \draw (\bstart-\nth*2*4-\nth:\radius+\eps) node {$Q_{a-1}$}; 
 \coordinate (p4) at (\bstart-5*2*\nth:\radius);
 \draw (\bstart-\nth*2*5:\radius+\eps) node {$P_{a}$}; 
 \coordinate (q4) at (\bstart-\nth:\radius);
 \draw (\bstart-\nth:\radius+\eps) node {$Q_{a}$};

 \coordinate (o1) at (0,\radius*.5);
 \draw (0,\radius*.5) node {$\Omega^0$};
 \coordinate (o2) at (0,\radius*-.5);
 \draw (0,\radius*-.5) node {$\Omega^{n-1}$};
 \coordinate (o3) at (0,0);
 \draw (0,0) node {$\Omega^\mu$};
  
\foreach \n in {1,2,3,4} {\draw (p\n) circle(\dotrad) [fill=black];
\draw (q\n) circle(\dotrad) [fill=black];
}

\draw [patharrow] (q1) to node[midway,left]{$\tilde\gamma_1$} (p1);
\draw [patharrow] (q1) to node[midway,above left]{$\tilde\delta_1$} (p2);
\draw [patharrow] (q2) to node[midway,above]{$\tilde\gamma_2$} (p2);
\draw [patharrow] (q4) to node[midway,left]{$\tilde\delta_a$} (p1);
\draw [patharrow] (q3) to node[midway,below]{$\tilde\delta_{a-1}$} (p4);
\draw [patharrow] (q3) to node[midway,below right]{$\tilde\gamma_{a-1}$} (p3);
\draw [patharrow] (q4) to node[midway,below left]{$\tilde\gamma_a$} (p4);

\draw [quivarrow, shorten <= 3mm , shorten >= 4mm] (p1) to node[midway,below]{} (o2);
\draw [quivarrow] (p2) to node[midway,below]{} (o2);
\draw [quivarrow, shorten <= 3mm , shorten >= 4mm] (p3) to node[midway,right]{} (o2);
\draw [quivarrow] (p4) to node[midway,below]{} (o2);
\draw [quivarrow] (o2) to node[midway,below]{} (q1);
\draw [quivarrow] (o2) to node[midway,below]{} (q2);
\draw [quivarrow] (o2) to node[midway,below]{} (q3);
\draw [quivarrow, shorten <= 5mm , shorten >= 3mm] (o2) to node[midway,right]{} (q4);

\draw (p1) edge [-,line width=4pt,draw=white, shorten >= 2mm, shorten <= 2mm] (o1);
\draw (p2) edge [-,line width=4pt,draw=white, shorten >= 2mm, shorten <= 2mm] (o1);
\draw (p3) edge [-,line width=4pt,draw=white, shorten >= 2mm, shorten <= 2mm] (o1);
\draw (p4) edge [-,line width=4pt,draw=white, shorten >= 2mm, shorten <= 2mm] (o1);
\draw (o1) edge [-,line width=4pt,draw=white, shorten >= 2mm, shorten <= 2mm] (q1);
\draw (o1) edge [-,line width=4pt,draw=white, shorten >= 2mm, shorten <= 2mm] (q2);
\draw (o1) edge [-,line width=4pt,draw=white, shorten >= 2mm, shorten <= 2mm] (q3);
\draw (o1) edge [-,line width=4pt,draw=white, shorten >= 2mm, shorten <= 2mm] (q4);

\draw [quivarrow] (p1) to node[midway,below]{} (o1);
\draw [quivarrow] (p2) to node[midway,below]{} (o1);
\draw [quivarrow] (p3) to node[midway,right]{} (o1);
\draw [quivarrow] (p4) to node[midway,below]{} (o1);
\draw [quivarrow] (o1) to node[midway,below]{} (q1);
\draw [quivarrow] (o1) to node[midway,below]{} (q2);
\draw [quivarrow] (o1) to node[midway,below]{} (q3);
\draw [quivarrow] (o1) to node[midway,right]{} (q4);


\draw [black, thick, dotted] (0,\radius*.3) edge (0,\radius*.1);
\draw [black, thick, dotted] (0,\radius*-.3) edge (0,\radius*-.1);

\draw [black,thick, dotted,domain=-10:50] plot ({3*cos(\x)}, {3*sin(\x)});

 \end{tikzpicture}
\]
\caption{The subcomplex $\tilde{\mathcal V}$ of $X_{(Q_G, W_G)}$.}
\label{fig:tildeV}
\end{figure}
  By picking $g$ suitably, we can assume that $an = l$, where $n = |G|$. We choose as representatives of vertices a set
  $\mathcal E$ which contains $\left\{ \Omega, P_1, \dots, P_a, Q_1, \dots, Q_a \right\}.$
  Observe that $G$ acts freely on $\mathcal U$, and it also acts freely on $\mathcal U\cap \mathcal V$.
  Then $X_{(Q_G, W_G)} = \tilde{ \mathcal U}\cup \tilde{ \mathcal V}$, where $\tilde{\mathcal V}$ is as in Figure \ref{fig:tildeV}
  and $\tilde{\mathcal U}\cong \mathcal U/G$ is the quotient space of $\mathcal U$ by $G$, by our construction of $Q_G$.
  In the picture we denote by $\tilde\delta_i$ the product of $\tilde d$, where $d$ is an arrow of $\delta_i$, and similarly for $\tilde \gamma_i$.
  We have that $\tilde{\mathcal U}$ is attached to $\tilde{\mathcal V}$ along $(\mathcal U\cap \mathcal V)/G = \tilde{\mathcal U}\cap \tilde{\mathcal V} $.
  Now observe that since $X_{(Q, W)}$ is simply connected, it must retract to $\mathcal V$. In particular there is a deformation retraction $F$ between $\mathcal U$ and 
  $\mathcal U\cap \mathcal V$. We choose $F$ such that it commutes with the action of $G$ on $\mathcal U$. Then there is an induced deformation retraction 
  $\tilde F$ between $\tilde{\mathcal U}$ and $\tilde{\mathcal U}\cap \tilde{\mathcal V}$.
  In particular $X_{(Q_G, W_G)} $ retracts to $\tilde{\mathcal V}$, so they have the same homotopy type.

  We need to describe the faces of $\tilde{\mathcal V}$. Let us look at the set of cycles in $W$ involving only vertices in $\mathcal V$. 
  These are $\gamma_1\beta_1\alpha_1, \dots, \gamma_a\beta_a\alpha_a, \delta_1\beta_1\alpha_2, \dots, \delta_a\beta_a \alpha_{a+1}$ and their orbits. These cycles are all of type (ii), so we 
  have
  \begin{align*}
    \widetilde{\gamma_i\beta_i\alpha_i}^{\mu} &= \tilde{\gamma}_i\tilde{\beta}_i^\mu\tilde{\alpha}_i^\mu\\
    \widetilde{\delta_i\beta_i\alpha_{i+1}}^\mu &= \tilde{\delta}_i\tilde{\beta}_i^\mu\tilde{\alpha}_{i+1}^\mu,
  \end{align*}
  for $i= 1, \dots, a$, with the notation $\tilde{\alpha}_{a+1}^\mu = \tilde{\alpha}_1^\mu$.
  Now fix $\mu\in \left\{ 0, \dots, n-1 \right\}$. Then $\Omega^\mu$ is contained in every $\tilde{\gamma}_i\tilde{\beta}_i^\mu\tilde{\alpha}_i^\mu$, in every $ \tilde{\delta}_i\tilde{\beta}_i^\mu\tilde{\alpha}_{i+1}^\mu$,
  and no other cycle in $W_G$.
  The subcomplex consisting of the faces corresponding to these $2a$ cycles is a disk with center $\Omega^\mu$. Thus $\tilde{\mathcal V}$ consists of $n$ disks glued along their boundary
  $\tilde\delta_a\cdots\tilde\gamma^{-1}_2\tilde\delta_1\tilde\gamma^{-1}_1$, and therefore has the 
  homotopy type of a bouquet of spheres. In particular it is simply connected, which concludes the proof.
\end{proof}

In Example \ref{ex:postnikov} we proceed as in the proof of Proposition \ref{prop:simcon} to determine the canvas of $(Q_G, W_G)$. 

\begin{rmk}
  If $G$ acts on a planar QP $(Q, W)$ by rotations and $(Q, W)$ has a $G$-invariant cut, then $Q$ must have a central vertex. Indeed, $Q$ has either a central vertex or a central cycle, but 
  on a central cycle one cannot choose exactly one arrow in a way which is invariant under rotations.
\end{rmk}

In the self-injective case we have the following result.

\begin{thm}
  Let $(Q, W)$ be a strongly planar self-injective QP, with a group $G$ acting by rotations and enough $G$-invariant cuts. 
  Then all the truncated Jacobian algebras of $(Q_G, W_G)$ are derived equivalent to each other.
\end{thm}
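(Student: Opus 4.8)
The plan is to reduce to the cut-mutation machinery of Herschend and Iyama from \cite[\S7]{HI11b} and apply it to the QP $(Q_G, W_G)$. Concretely, I would invoke the result that for a self-injective QP with enough cuts and simply connected canvas all cuts are mutation equivalent, together with the fact that a single cut mutation relates the two associated truncated Jacobian algebras by a (derived) tilting equivalence. The whole proof then amounts to checking that $(Q_G, W_G)$ satisfies the three hypotheses: self-injectivity, enough cuts, and simple connectedness of its canvas.

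First I would verify that $(Q_G, W_G)$ is self-injective. Since $\Lambda = \mathcal P(Q, W)$ is self-injective, the skew group algebra $\Lambda G$ is self-injective by \cite[Theorem 1.3(c)(iii)]{RR85}, and as self-injectivity is a Morita invariant the basic algebra $\eta(\Lambda G)\eta$ is self-injective as well. By Theorem \ref{thm:main} we have $\mathcal P(Q_G, W_G) \cong \eta(\Lambda G)\eta$, so $(Q_G, W_G)$ is self-injective, and its truncated Jacobian algebras are meaningful objects in the sense of Theorem \ref{thm:HI}.

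Next I would supply the two combinatorial inputs. That $(Q_G, W_G)$ has enough cuts is exactly Lemma \ref{lem:enough}, using the hypothesis that $(Q, W)$ has enough $G$-invariant cuts. For the topological input, observe that having enough $G$-invariant cuts produces in particular at least one $G$-invariant cut; by the remark preceding this theorem, a planar QP with $G$ acting by rotations and admitting a $G$-invariant cut must have a central vertex, which is necessarily fixed by $G$. This is precisely the hypothesis of Proposition \ref{prop:simcon}, whence $X_{(Q_G, W_G)}$ is simply connected.

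With self-injectivity, enough cuts, and a simply connected canvas established, the Herschend--Iyama results apply to $(Q_G, W_G)$: any two of its cuts are connected by a sequence of cut mutations, each mutation relating the corresponding truncated Jacobian algebras by a tilting (hence derived) equivalence, so all truncated Jacobian algebras of $(Q_G, W_G)$ are derived equivalent. The delicate point, and the step I would take most care over, is matching hypotheses: Proposition \ref{prop:simcon} yields only that the canvas is simply connected, \emph{not} that $(Q_G, W_G)$ is itself strongly planar. One must therefore make sure that the mutation-transitivity statement quoted from \cite[\S7]{HI11b} requires exactly simple connectedness of the canvas together with enough cuts, and not full planarity of $Q_G$; this is what makes Proposition \ref{prop:simcon} the crucial ingredient rather than any planarity of $Q_G$.
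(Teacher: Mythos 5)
Your proposal is correct and takes essentially the same route as the paper: Lemma~\ref{lem:enough} supplies enough cuts, Proposition~\ref{prop:simcon} supplies simple connectedness of the canvas, and the conclusion follows from the Herschend--Iyama mutation result \cite[Theorem 8.7]{HI11b}, which indeed requires only enough cuts and a simply connected canvas rather than planarity of $Q_G$. Your explicit use of the remark preceding the theorem (enough $G$-invariant cuts $\Rightarrow$ a central vertex fixed by $G$, which is the hypothesis of Proposition~\ref{prop:simcon}) makes precise a step the paper's proof leaves implicit.
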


\begin{proof}
  By Lemma \ref{lem:enough}, $(Q_G, W_G)$ has enough cuts. By Proposition \ref{prop:simcon}, $X_{(Q_G, W_G)}$ is simply connected. Then we conclude by \cite[Theorem 8.7]{HI11b}.
\end{proof}

In particular, note that this result applies to QPs coming from Postnikov diagrams, provided they have enough $G$-invariant cuts. It should be noted that we know of no examples of a self-injective QP with a cut 
that does not have enough cuts, nor of a self-injective QP with a $G$-invariant cut that does not have enough $G$-invariant cuts.

\section{Examples}\label{sec:ex}

\tikzset{>=stealth}
\tikzset{cutarrow/.style={densely dashed}}

In this section we will illustrate our construction with some examples. For simplicity we will assume that $\kk=\C$, so the assumption \ref{ass:field} will be always satisfied.

\subsection{Examples from planar rotation-invariant QPs}

As we have seen in Section~\ref{sec:nakayama}, many examples where our construction may be applied are given by quivers embedded in the plane with a group acting by rotations. Let us illustrate some of them.

\begin{ex}[$2$-representation finite algebras of type A]\label{ex:triangles}

A family of examples of self-injective planar QPs is given by $3$-preprojective algebras of $2$-representation finite algebras of type A, which were introduced in \cite{IO11} and are defined as follows.

Let $s\geq1$ and $Q=Q^{(s)}$ be the quiver defined by
$$Q_0 = \{(x_1,x_2,x_3)\in \Z_{\geq0}^3 \ |\ x_1+x_2+x_3=s-1\},$$
$$Q_1 = \{\alpha_i\colon x \to x+f_i \ |\  1\leq i\leq3,\, x,x+f_i\in Q_0 \},$$
where $f_1=(-1,1,0)$, $f_2=(0,-1,1)$, $f_3=(1,0,-1)$. The potential $W$ is given by the sum of all cycles of the form $\alpha_1\alpha_2\alpha_3$ minus the ones of the form $\alpha_1\alpha_3\alpha_2$.

The Nakayama automorphism of $\Lambda=\cal P(Q,W)$ is induced by the unique automorphism of $Q$ given on vertices by $(x_1,x_2,x_3)\mapsto(x_3,x_1,x_2)$. Then the group $G$ generated by it acts on $Q$ by an anticlockwise rotation by $2\pi/3$.
We may note that this action has a (unique) fixed vertex if and only if $s\equiv 1\pmod 3$. In that case the vertex $(\frac{s-1}{3},\frac{s-1}{3},\frac{s-1}{3})$ is fixed.

\begin{prop}
If $s\equiv 1\pmod 3$, then $Q^{(s)}$ has enough $G$-invariant cuts.
\end{prop}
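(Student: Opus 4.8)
The plan is to exhibit, for every arrow of $Q=Q^{(s)}$, a $G$-invariant cut containing it. I would begin by recording the action of $G$ on arrows: since $g(x_1,x_2,x_3)=(x_3,x_1,x_2)$, one checks that $g$ sends an arrow of type $i$ (one of the form $\alpha_i\colon x\to x+f_i$) to an arrow of type $i+1$, indices mod $3$. Thus $g$ permutes the three types cyclically and acts freely on $Q_1$, so every $G$-orbit of arrows has exactly three elements and a $G$-invariant cut is the same thing as a union of such orbits. Because every $3$-cycle of $W$ uses one arrow of each type, the three subsets $C_i=\{\text{arrows of type }i\}$ are cuts, but $g*C_1=C_2$, $g*C_2=C_3$, $g*C_3=C_1$; the task is precisely to replace these by honestly $G$-invariant cuts.

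The main device I would use is to search for cuts of the form $d_C=d_{C_1}+\nabla h$, where $h\colon Q_0\to\Z$ and $\nabla h(\alpha)=h(\mathfrak t\alpha)-h(\mathfrak s\alpha)$. Adding $\nabla h$ changes the degree of no cycle, so $d_C$ is automatically homogeneous of degree $1$ on $W$, and it defines a cut exactly when it is $\{0,1\}$-valued, i.e. when $h$ varies by $0$ or $-1$ along type-$1$ arrows and by $0$ or $+1$ along type-$2$ and type-$3$ arrows. A direct calculation, using the cyclic action above, shows that such a $d_C$ is $G$-invariant if and only if $h(g(x))-h(x)=-x_1+c$ for some constant $c$. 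Summing this identity over a $G$-orbit $\{x,g(x),g^2(x)\}$, the left-hand side telescopes to $0$ while the right-hand side equals $-(x_1+x_2+x_3)+3c=-(s-1)+3c$; for integral $h$ this forces $c=\tfrac{s-1}{3}\in\Z$, so a solution can exist only if $3\mid s-1$. This is exactly the hypothesis, and it is the concrete form of the obstruction from the remark after Proposition~\ref{prop:simcon}: a $G$-invariant cut forces the central fixed vertex $\Omega=(\tfrac{s-1}{3},\tfrac{s-1}{3},\tfrac{s-1}{3})$.

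Conversely, when $s\equiv 1\pmod 3$ the equation $h(g(x))-h(x)=-x_1+\tfrac{s-1}{3}$ is consistent on every orbit and holds trivially at $\Omega$, so it admits integral solutions; adding to one such solution an arbitrary $G$-invariant function on $Q_0$ produces a whole family of $G$-invariant gradings, from which I would extract those obeying the slope constraints above. The decisive step is then to verify that this family of $G$-invariant cuts covers every arrow. By $G$-invariance it suffices to produce, for each type-$1$ arrow $\alpha_1\colon x\to x+f_1$, one admissible $h$ with $h(x+f_1)=h(x)$, so that $\alpha_1$ has degree $1$; I would build such an $h$ by assigning heights radially outward from $\Omega$ and using the rotational freedom to route the flat step onto the prescribed arrow.

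I expect this last coverage step to be the real obstacle: once $3\mid s-1$ the existence of $G$-invariant cuts is cheap, but exhibiting enough integral, $\{0,1\}$-valued, $G$-invariant height functions to hit every arrow is where the genuine combinatorial work lies. As a guide I would first carry out $s=4$ by hand, where three $G$-invariant cuts — the rotation-symmetric analogues of $C_1,C_2,C_3$ — already contain all eighteen arrows, and the general case should follow the same radial pattern.
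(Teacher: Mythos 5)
Your setup is sound and your necessity computation is correct: writing candidate degree functions as $d_{C_1}+\nabla h$, invariance is indeed equivalent to $h(g(x))-h(x)=-x_1+c$ for a constant $c$, and telescoping over a free orbit gives $3c=s-1$, so integrality forces $3\mid s-1$ (this matches the paper's remark that a $G$-invariant cut on a rotation-invariant planar QP forces a central fixed vertex). But this is the converse of what the proposition asserts. The actual statement --- that for $s\equiv1\pmod 3$ \emph{every} arrow lies in some $G$-invariant cut --- is exactly the step you do not carry out. After reducing (correctly) to type-$1$ arrows, you propose to ``build such an $h$ by assigning heights radially outward from $\Omega$ and using the rotational freedom to route the flat step onto the prescribed arrow''; this is not an argument. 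No $h$ is ever exhibited that is simultaneously integral, solves the invariance equation, and keeps $d_{C_1}+\nabla h$ valued in $\{0,1\}$, let alone one such $h$ for each prescribed type-$1$ arrow, and you acknowledge that this is ``where the genuine combinatorial work lies.'' Since that work is absent, the proposal establishes only the (unneeded) necessity direction and leaves the proposition unproved.

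The missing step has a short explicit solution, which is what the paper does, bypassing height functions altogether. Let $x_0=(\frac{s-1}{3},\frac{s-1}{3},\frac{s-1}{3})$ be the fixed vertex (your $\Omega$; this is where $s\equiv1\pmod 3$ enters), let $L=\{y\in\Z^3\mid y_1+y_2+y_3=0\}$, and let $\omega\colon L\to\Z/3\Z$ be the homomorphism with $\omega(f_1)=\omega(f_2)=\omega(f_3)=1$. For $j\in\Z/3\Z$ put $D_j=\{\alpha_i\colon x\to x+f_i \mid \omega(x-x_0)=j\}$, the set of all arrows leaving a ``level $j$'' vertex. Every cycle of $W$ visits vertices $x$, $x+f_i$, $x+f_i+f_{i'}$, whose levels are $\omega(x-x_0)$, $\omega(x-x_0)+1$, $\omega(x-x_0)+2$, one in each residue class; hence exactly one arrow of the cycle lies in $D_j$, so each $D_j$ is a cut. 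Since $g$ fixes $x_0$ and permutes $f_1,f_2,f_3$, the level is constant on $G$-orbits, so each $D_j$ is $G$-invariant; and $\alpha_i\colon x\to x+f_i$ lies in $D_{\omega(x-x_0)}$, so the three cuts $D_0,D_1,D_2$ already cover $Q_1$. This is the uniform version of the three symmetric cuts you predicted for $s=4$, and it is compatible with your framework: $d_{D_j}-d_{C_1}$ sums to zero around every face, hence equals $\nabla h$ for some integral $h$ by simple connectivity of the canvas --- but the proof neither needs nor benefits from making that $h$ explicit.
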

\begin{proof}
Call $x_0=(\frac{s-1}{3},\frac{s-1}{3},\frac{s-1}{3})$ the unique fixed vertex.
Let $L=\{(x_1,x_2,x_3)\in \Z^3 \,|\, x_1+x_2+x_3=0\}$ and note that it is a free abelian group of rank $2$ with basis $\{f_1,f_2\}$.
We may embed $Q_0$ in $L$ via the map $x\mapsto x-x_0$. Note that the action of $G$ on $Q_0$ can be naturally extended to an action on $L$, which is again given by $(x_1,x_2,x_3)\mapsto(x_3,x_1,x_2)$.

Let $\omega\colon L\to\Z/3\Z$ be the group homomorphism defined by $\omega(f_i)=1$ for $i=1,2,3$. For each $j\in\Z/3\Z$ we define the following subset of $Q_1$:
$$C_j = \{\alpha_i\colon x\to x+f_i \,|\, \omega(x-x_0)=j\}.$$
Then $C_j$ is a cut (cf.~\cite[Example~5.8]{HIO14}). It is symmetric because $\omega$ is invariant on $G$-orbits.
Moreover every arrow is contained in a cut of this type, so the statement follows.
\end{proof}

As an example, we illustrate the cut $C_0$ of $Q^{(7)}$ in Figure~\ref{fig:triangle_cut}.

\begin{figure}[H]
\centering
\begin{tikzpicture}[->, scale=1.1, baseline=2cm, inner sep=0.9, shape=circle]
\begin{scriptsize}
\begin{scope}[rotate=-15]

\pgfmathtruncatemacro{\s}{6}  

\foreach \x in {0,...,\s}
{
  \pgfmathtruncatemacro{\maxy}{\s-\x-1}
  \ifnum \maxy>-1 {
  \foreach \y in {0,...,\maxy}
  {
    \pgfmathtruncatemacro{\z}{\s-\y-\x}
    \pgfmathtruncatemacro{\cut}{Mod(2*\x+\y,3)}
    
    \node (\x \z \y) at (\x,\z,\y) {$\x\z\y$};
    \pgfmathtruncatemacro{\zm}{\z-1}
    \pgfmathtruncatemacro{\yp}{\y+1}
    \pgfmathtruncatemacro{\zx}{\z-\x}
    \node (\x \zm \yp) at (\x,\zm,\yp) {$\x\zm\yp$};
    \ifnum \cut=0 \draw[cutarrow] (\x \z \y) to (\x \zm \yp);
	  [\else \draw (\x \z \y) to (\x \zm \yp);] \fi
    
    \node (\y \x \z) at (\y,\x,\z) {$\y\x\z$};
    \pgfmathtruncatemacro{\zm}{\z-1}
    \pgfmathtruncatemacro{\yp}{\y+1}
    \pgfmathtruncatemacro{\zy}{\z-\y}
    \node (\yp \x \zm) at (\yp,\x,\zm) {$\yp\x\zm$};
    \ifnum \cut=0 \draw[cutarrow] (\y \x \z) to (\yp \x \zm);
	  [\else \draw (\y \x \z) to (\yp \x \zm);] \fi
    
    \node (\z \y \x) at (\z,\y,\x) {$\z\y\x$};
    \pgfmathtruncatemacro{\zm}{\z-1}
    \pgfmathtruncatemacro{\yp}{\y+1}
    \node (\zm \yp \x) at (\zm,\yp,\x) {$\zm\yp\x$};
    \ifnum \cut=0 \draw[cutarrow] (\z \y \x) to (\zm \yp \x);
	  [\else \draw (\z \y \x) to (\zm \yp \x);] \fi
  }
  } \fi;
}
\end{scope}
\end{scriptsize}
\end{tikzpicture}
\caption{The quiver $Q^{(7)}$. The cut $C_0$ is given by the dashed arrows.} \label{fig:triangle_cut}
\end{figure}
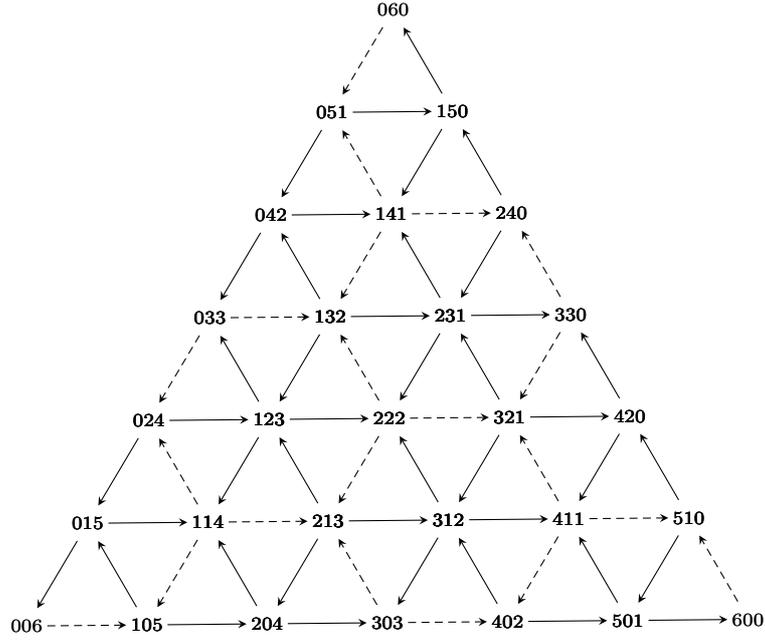

Now we will describe our skew group algebra construction for the quiver $Q=Q^{(4)}$ (which is depicted in Figure~\ref{fig:triangle}).

\begin{figure}[H]
\centering
\begin{tikzpicture}[->, scale=1.1, baseline=2cm, inner sep=0.9, shape=circle]
\begin{scriptsize}
\begin{scope}[rotate=-15]

\pgfmathtruncatemacro{\s}{3}  

\foreach \x in {0,...,\s}
{
  \pgfmathtruncatemacro{\maxy}{\s-\x-1}
  \ifnum \maxy>-1 {
  \foreach \y in {0,...,\maxy}
  {
    \pgfmathtruncatemacro{\z}{\s-\y-\x}
    
    \node (\x \z \y) at (\x,\z,\y) {$\x\z\y$};
    \pgfmathtruncatemacro{\zm}{\z-1}
    \pgfmathtruncatemacro{\yp}{\y+1}
    \pgfmathtruncatemacro{\zx}{\z-\x}
    \node (\x \zm \yp) at (\x,\zm,\yp) {$\x\zm\yp$};
    \ifnum \y=2 { \draw (\x \z \y) to node[midway,left] {$\beta$} (\x \zm \yp);}\fi;
    \ifnum \y=\z { \draw (\x \z \y) to node[midway,right] {$\lambda$} (\x \zm \yp);}\fi;
    \ifnum \zx=2 { \draw (\x \z \y) to node[midway,left] {$\delta$} (\x \zm \yp);}\fi;
    \draw (\x \z \y) to (\x \zm \yp);
    
    \node (\y \x \z) at (\y,\x,\z) {$\y\x\z$};
    \pgfmathtruncatemacro{\zm}{\z-1}
    \pgfmathtruncatemacro{\yp}{\y+1}
    \pgfmathtruncatemacro{\zy}{\z-\y}
    \node (\yp \x \zm) at (\yp,\x,\zm) {$\yp\x\zm$};
    \ifnum \z=3 { \draw (\y \x \z) to node[midway,below] {$\alpha$} (\yp \x \zm);}\fi;
    \ifnum \zy=2 { \draw (\y \x \z) to node[midway,above] {$\theta$} (\yp \x \zm);}\fi;
    \draw (\y \x \z) to (\yp \x \zm);
    
    \node (\z \y \x) at (\z,\y,\x) {$\z\y\x$};
    \pgfmathtruncatemacro{\zm}{\z-1}
    \pgfmathtruncatemacro{\yp}{\y+1}
    \node (\zm \yp \x) at (\zm,\yp,\x) {$\zm\yp\x$};
    \ifnum \x=2 { \draw (\z \y \x) to node[midway,right] {$\gamma$} (\zm \yp \x);}\fi;
    \draw (\z \y \x) to (\zm \yp \x);
  }
  } \fi;
}
\end{scope}
\end{scriptsize}
\end{tikzpicture}
\caption{The quiver $Q^{(4)}$.} \label{fig:triangle}
\end{figure}
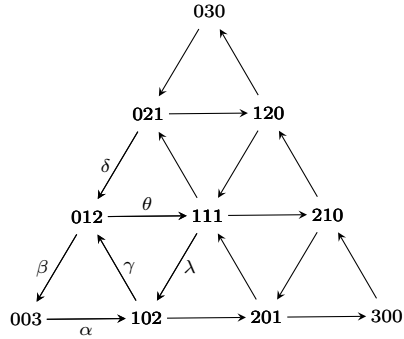

We can choose, for example, $\cal E=\{(0,0,3),(0,1,2),(1,0,2),(1,1,1)\}$ as a set of representatives of vertices. For simplicity we shall denote the elements of this set by $\{1,2,3,4\}$ respectively. Then $Q_G$ (depicted in Figure~\ref{fig:sga_triangle}) has vertices $\eta^1,\eta^2,\eta^3,\eta^4_0,\eta^4_1,\eta^4_2$, which will be denoted respectively by $1,2,3,4^0,4^1,4^2$. We will also rename the arrows of type (1), (2), (3) in $Q$. These are
$$\alpha\colon 1\to 3,\quad \beta\colon 2\to 1,\quad \gamma\colon 3\to 2,\quad \delta\colon g^2(3)\to 2\quad \text{ of type (1),}$$
$$\theta\colon 2\to 4\quad \text{ of type (2),}$$
$$\lambda\colon 4\to 3\quad \text{ of type (3).}$$
We take $\cal C=\{c_1,c_2,c_3\}$, where $c_1=\alpha\beta\gamma$ is of type (i) and $c_2=\lambda\theta\gamma, c_3=\lambda g(\theta)g(\delta)$ are of type (ii). Note that $p(c_2)=0$ and $p(c_3)=1$. Then we get
$$W_G = -\tilde\alpha\tilde\beta\tilde\gamma + \sum_{\mu=0}^2 \tilde\lambda^\mu\tilde\theta^\mu\tilde\gamma - \sum_{\mu=0}^2 \zeta^{-\mu} \tilde\lambda^\mu\tilde\theta^\mu\tilde\delta.$$

\begin{figure}[H]
\centering
\begin{tikzpicture}[->, scale=1.8, baseline=2cm, inner sep=2, shape=circle]
\begin{scriptsize}
\begin{scope}[rotate=-15]

\node (a) at (1,0,0) {$3$};
\node (b) at (0,1,0) {$2$};
\node (c) at (0,0,1) {$1$};

\begin{scope}[xshift=-0.5cm,yshift=1cm]
\node (d0) at (1,1,-1) {$4^0$};
\end{scope}
\begin{scope}[xshift=0,yshift=0]
\node (d1) at (1,1,-1) {$4^1$};
\end{scope}
\begin{scope}[xshift=0.5cm,yshift=-1cm]
\node (d2) at (1,1,-1) {$4^2$};
\end{scope}

\draw (b) to node[midway,above] {$\tilde\beta$} (c);
\draw (c) to node[midway,above] {$\tilde\alpha$} (a);
\draw[transform canvas={yshift=0.15em,xshift=0.2em}] (a) to node[midway,right] {$\tilde\delta$} (b);
\draw[transform canvas={yshift=-0.15em,xshift=-0.2em}] (a) to node[midway,left] {$\tilde\gamma$} (b);

\foreach \i in {0,...,2}
{
  \draw (b) to node[near start,above] {$\tilde\theta^{\i}$} (d\i);
  \draw (d\i) to node[near start,above left] {$\tilde\lambda^{\i}$} (a);
}
\end{scope}
\end{scriptsize}
\end{tikzpicture}
\caption{The quiver $Q^{(4)}_G$.} \label{fig:sga_triangle}
\end{figure}

By the results in Section~\ref{sec:dual}, the dual group $\hat G=\langle \chi\rangle$ acts on $Q_G$ as follows.
The vertices $1,2,3$ are fixed, while $\chi(4^\mu)=4^{\mu+1}$, $\mu=0,1,2$.
The arrows $\tilde\alpha,\tilde\beta,\tilde\gamma$ are fixed, $\chi(\tilde\theta^\mu)=\tilde\theta^{\mu+1}$ and $\chi(\tilde\lambda^\mu)=\tilde\lambda^{\mu+1}$, $\mu=0,1,2$.
Since $t(\delta)=2$, we have $\chi(\tilde\delta)=\zeta^2\tilde\delta$. Note that, in the process of getting back the initial quiver using the isomorphism $\phi$ of Proposition~\ref{prop:main2b}, the vertices $4^0,4^1,4^2$ give rise to the vertex $(1,1,1)$ of $Q$, the vertex $2$ gives rise to $(0,1,2),(1,2,0),(2,0,1)$, $3$ to $(0,2,1),(2,1,0),(1,0,2)$ and $1$ to $(0,0,3),(0,3,0),(3,0,0)$.

\end{ex}

\begin{ex}[Self-injective QPs from Postnikov diagrams]\label{ex:postnikov}
  In this example we illustrate Corollary~\ref{prop:combining} and (the proof of) Proposition~\ref{prop:simcon}. Let $Q$ be the quiver of Figure~\ref{fig:416}, with the potential $W$ given by the sum of the 
  clockwise faces minus the sum of the anticlockwise faces.
  Thus $(Q, W)$ is a strongly planar quiver with potential. 
  It is constructed from a rotation-invariant $(4,16)$-Postnikov diagram, see \cite[Figure 19]{Pas17b}. By Theorem~\ref{thm:postnikov}, its Jacobian algebra $\Lambda$ is therefore self-injective, with Nakayama automorphism $\varphi$
induced by a rotation by $\frac{\pi}{2}$. Let us consider the group $G = \langle \varphi^2\rangle$. Then the skew group algebra $\Lambda G$ is Morita equivalent to the Jacobian algebra 
$\mathcal P(Q_G,W_G)$, where $Q_G$ is depicted in Figure~\ref{fig:416/2}. The canvas $X_{(Q_G, W_G)}$ is given by an octahedron in the middle attached to an annulus made of all the remainining faces.
Note that this describes the potential $W_G$ completely up to signs.
This algebra is self-injective with Nakayama automorphism given by $\varphi\otimes 1$, but it is not symmetric since its Nakayama permutation has order 2.

If we instead take the skew group algebra construction with respect to $\langle \varphi\rangle$, we get the quiver of Figure~\ref{fig:416/4}. Its canvas is an annulus consisting of the outer cycles,
attached to four disks sharing their boundary circle. These disks are subdivided into two triangles each. Again note that describing the canvas determines the potential up to fourth roots of unity.
This algebra is symmetric by Corollary~\ref{prop:combining}.

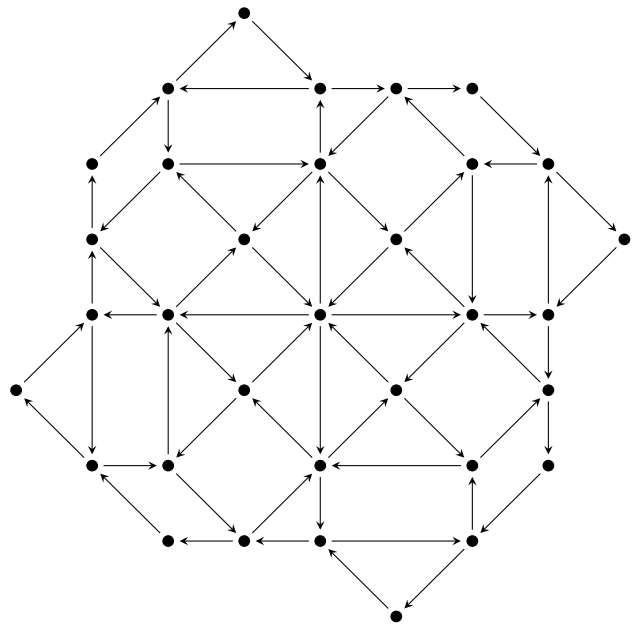
\begin{figure}[H]
\[
\begin{tikzpicture}[baseline=(bb.base),scale=1,shape=circle,
    quivarrow/.style={black, ->, >= stealth,shorten <= 1.5mm, shorten >= 1.5mm},
    cutarrow/.style = {black, -latex,very thick, dotted},
    patharrow/.style = {black,-latex, dotted, very thick},
  ]

\newcommand{\goodarrow}{\arrow{angle 60}}
\newcommand{\nth}{360/4}
\newcommand{\radius}{3.5cm} 
\newcommand{\dotrad}{0.07cm} 

\path (0,0) node (bb) {};


\coordinate (x) at (0,0);
  \draw (0,0) node {};
  \draw (x) circle(\dotrad) [fill=black];

\foreach \n in {1,2,3,4}
{
\coordinate (p1\n) at ( {-3*cos(\nth*\n)},{-3*sin(\nth*\n)});
  \draw (p1\n) circle(\dotrad) [fill=black];

\coordinate (p7\n) at ( {-2*cos(\nth*\n)},{-2*sin(\nth*\n)});
  \draw (p7\n) circle(\dotrad) [fill=black];

\coordinate (p8\n) at ( {-cos(\nth*\n)-sin(\nth*\n)},{cos(\nth*\n)-sin(\nth*\n)});
  \draw (p8\n) circle(\dotrad) [fill=black];

\coordinate (p6\n) at ( {-2*cos(\nth*\n)-2*sin(\nth*\n)},{2*cos(\nth*\n)-2*sin(\nth*\n)});
  \draw (p6\n) circle(\dotrad) [fill=black];

\coordinate (p2\n) at ( {-3*cos(\nth*\n)-1*sin(\nth*\n)},{1*cos(\nth*\n)-3*sin(\nth*\n)});
  \draw (p2\n) circle(\dotrad) [fill=black];
  
\coordinate (p3\n) at ( {-3*cos(\nth*\n)-2*sin(\nth*\n)},{2*cos(\nth*\n)-3*sin(\nth*\n)});
  \draw (p3\n) circle(\dotrad) [fill=black];

\coordinate (p4\n) at ( {-2*cos(\nth*\n)-3*sin(\nth*\n)},{3*cos(\nth*\n)-2*sin(\nth*\n)});
  \draw (p4\n) circle(\dotrad) [fill=black];

\coordinate (p5\n) at ( {-1*cos(\nth*\n)-4*sin(\nth*\n)},{4*cos(\nth*\n)-1*sin(\nth*\n)});
  \draw (p5\n) circle(\dotrad) [fill=black];
}


\foreach \n in {1,2,3,4}
{
  \draw  [quivarrow] (p1\n) to  (p2\n);
\draw [quivarrow] (p2\n) to  (p7\n);
\draw[quivarrow] (p7\n) to  (p1\n);
\draw[quivarrow] (p6\n) to  (p2\n);
\draw[quivarrow] (p2\n) to  (p3\n);
\draw [quivarrow](p3\n) to  (p4\n);
\draw [quivarrow] (p4\n) to  (p6\n);
\draw [quivarrow] (p7\n) to  (p8\n);
\draw [quivarrow] (p8\n) to  (p6\n);
\draw [quivarrow] (p4\n) to  (p5\n);
\draw [quivarrow] (p8\n) to  (x);
\draw [quivarrow] (x) to  (p7\n);

}

\foreach \n in {2,3,4}
{
\draw [quivarrow] (p1\the\numexpr\n-1\relax) to  (p4\n);
\draw [quivarrow] (p7\the\numexpr\n-1\relax) to  (p8\n);
\draw [quivarrow] (p5\n) to (p1\the\numexpr\n-1\relax);
\draw [quivarrow] (p6\n) to (p7\the\numexpr\n-1\relax);
}
\draw [quivarrow] (p14) to  (p41);
\draw [quivarrow] (p74) to  (p81);
\draw [quivarrow] (p51) to (p14);
\draw [quivarrow] (p61) to (p74);

 \end{tikzpicture}
\]
\caption{A self-injective QP with Nakayama automorphism $\varphi$ of order 4.}
\label{fig:416}
\end{figure}

\begin{figure}[H]
\[
\begin{tikzpicture}[baseline=(bb.base),scale =1,
     quivarrow/.style={black, ->, >= stealth,shorten <= 1.5mm, shorten >= 1.5mm},
    cutarrow/.style = {black, -latex,very thick, dotted},
    patharrow/.style = {black,-latex, dotted, very thick
    }
  ]

\newcommand{\goodarrow}{\arrow{angle 60}}
\newcommand{\nth}{360/2}
\newcommand{\dotrad}{0.07cm} 

\path (0,0) node (bb) {};


\foreach \n in {1,2}
{

\coordinate (c\n) at ( {-.6*cos(\nth*\n)+.5*sin(\nth*\n)},{.5*cos(\nth*\n)-.5*sin(\nth*\n)});
  \draw (c\n) circle(\dotrad) [fill=black];

}


\foreach \n in {1,2}
{
\coordinate (p1\n) at ( {2*cos(\nth*\n)},{2*sin(\nth*\n)});
  \draw (p1\n) circle(\dotrad) [fill=black];

\coordinate (p2\n) at ( {3*cos(\nth*\n)},{3*sin(\nth*\n)});
  \draw (p2\n) circle(\dotrad) [fill=black];

\coordinate (p3\n) at ( {2*sin(\nth*\n)},{2*cos(\nth*\n)});
  \draw (p3\n) circle(\dotrad) [fill=black];

  \coordinate (p4\n) at ( {-2.5*cos(\nth*\n)-2.5*sin(\nth*\n)},{cos(\nth*\n)-2.5*sin(\nth*\n)} );
  \draw (p4\n) circle(\dotrad) [fill=black];

\coordinate (p5\n) at ( {3*sin(\nth*\n)},{3*cos(\nth*\n)});
  \draw (p5\n) circle(\dotrad) [fill=black];

\coordinate (p6\n) at ( {-2*cos(\nth*\n)-4*sin(\nth*\n)},{4*cos(\nth*\n)-2*sin(\nth*\n)});
  \draw (p6\n) circle(\dotrad) [fill=black];

\coordinate (p7\n) at ( {1*cos(\nth*\n)-4.5*sin(\nth*\n)},{4.5*cos(\nth*\n)+sin(\nth*\n)});
  \draw (p7\n) circle(\dotrad) [fill=black];

\coordinate (p8\n) at ( {3*cos(\nth*\n)-3*sin(\nth*\n)},{3*cos(\nth*\n)+3*sin(\nth*\n)});
  \draw (p8\n) circle(\dotrad) [fill=black];

}


\foreach \n in {1,2}
{
\draw [quivarrow] (p1\n) to  (p3\n);
\draw [quivarrow] (p5\n) to  (p1\n);
\draw [quivarrow] (p3\n) to  (p5\n);
\draw [quivarrow] (p5\n) to  (p4\n);
\draw [quivarrow] (p4\n) to  (p6\n);
\draw [quivarrow] (p6\n) to  (p7\n);
\draw [quivarrow] (p7\n) to  (p5\n);
\draw [quivarrow] (p7\n) to  (p8\n);
\draw [quivarrow] (p8\n) to  (p2\n);
\draw [quivarrow] (p2\n) to  (p7\n);
\draw [quivarrow] (p1\n) to  (p2\n);


\draw [quivarrow] (p3\n) to  (c1);
\draw [quivarrow] (c1) to  (p1\n);
\draw (c2) edge [-,line width=4pt,draw=white, shorten >= 1.5mm, shorten <= 1.5mm] (p1\n);
\draw (p3\n) edge [-,line width=4pt,draw=white, shorten >= 1.5mm, shorten <= 1.5mm] (c2);
\draw [quivarrow] (c2) to  (p1\n);
\draw [quivarrow] (p3\n) to  (c2);

}

\draw [quivarrow] (p11) to  (p32);
\draw [quivarrow] (p12) to  (p31);
\draw [quivarrow] (p22) to  (p41);
\draw [quivarrow] (p21) to  (p42);
\draw [quivarrow] (p41) to  (p12);
\draw [quivarrow] (p42) to  (p11);

 \end{tikzpicture}
\]
\caption{The quiver of the skew group algebra $\Lambda \langle\varphi^2\rangle$.}
\label{fig:416/2}
\end{figure}

\begin{figure}[H]
\[
\begin{tikzpicture}[baseline=(bb.base),scale = .7,
     quivarrow/.style={black, ->, >= stealth,shorten <= 1.5mm, shorten >= 1.5mm},
    cutarrow/.style = {black, -latex,very thick, dotted},
    patharrow/.style = {black,-latex, dotted, very thick
    }
  ]

\newcommand{\goodarrow}{\arrow{angle 60}}
\newcommand{\eps}{11pt} 
\newcommand{\dotrad}{0.1cm} 

\path (0,0) node (bb) {};


\foreach \n in {1,2,3,4}
{

\coordinate (c\n) at (-2.5+\n,0);
  \draw (c\n) circle(\dotrad) [fill=black];

}


\coordinate (p1) at ( 0,3);
  \draw (p1) circle(\dotrad) [fill=black];

\coordinate (p2) at ( 0,-3);
  \draw (p2) circle(\dotrad) [fill=black];

\coordinate (p3) at ( 0,-4);
  \draw (p3) circle(\dotrad) [fill=black];

\coordinate (p6) at ( 0,5);
  \draw (p6) circle(\dotrad) [fill=black];

\coordinate (p5) at (-8,0);
  \draw (p5) circle(\dotrad) [fill=black];

\coordinate (p8) at (4,4);
  \draw (p8) circle(\dotrad) [fill=black];

\coordinate (p7) at (5,-4);
  \draw (p7) circle(\dotrad) [fill=black];

\coordinate (p4) at (-2,-5);
  \draw (p4) circle(\dotrad) [fill=black];

  \draw [quivarrow] (p3) to[out = 180, in=270] (-4,0) to[out =90, in = 170] (p1);
  \draw [quivarrow] (p1) to[out = -10, in=90] (3,0) to[out =-90, in = 0] (p2);
  \draw [quivarrow] (p1) to[out = 190, in=90] (-3,0) to[out =-90, in = 180] (p2);
  \draw [quivarrow] (p3) to[out =0, in=-90] (4,0) to[out =90, in = -90] (p8);
  \draw [quivarrow] (p6) to[out =200, in=90] (-6,0) to[out =-90, in = 160] (p4);
  \draw [quivarrow] (p4) to[out =200, in=-90]  (p5);
  \draw [quivarrow] (p5) to[out =90, in=160]  (p6);
  \draw [quivarrow] (p8) to[bend left]  (p7);
  \draw [quivarrow] (p7) to[bend left]  (p4);

  \draw [quivarrow] (p4) to (p3);
  \draw [quivarrow] (p1) to (p6);
  \draw [quivarrow] (p8) to (p1);
  \draw [quivarrow] (p6) to (p8);
  \draw [quivarrow] (p2) to (p3);

  \foreach \n in {1,2,3,4}
  {
  
  \draw [quivarrow] (c\n) to (p1);
  \draw [quivarrow] (p2) to (c\n);

  }

 \end{tikzpicture}
\]
\caption{The quiver of the skew group algebra $\Lambda \langle \varphi\rangle$.}
\label{fig:416/4}
\end{figure}
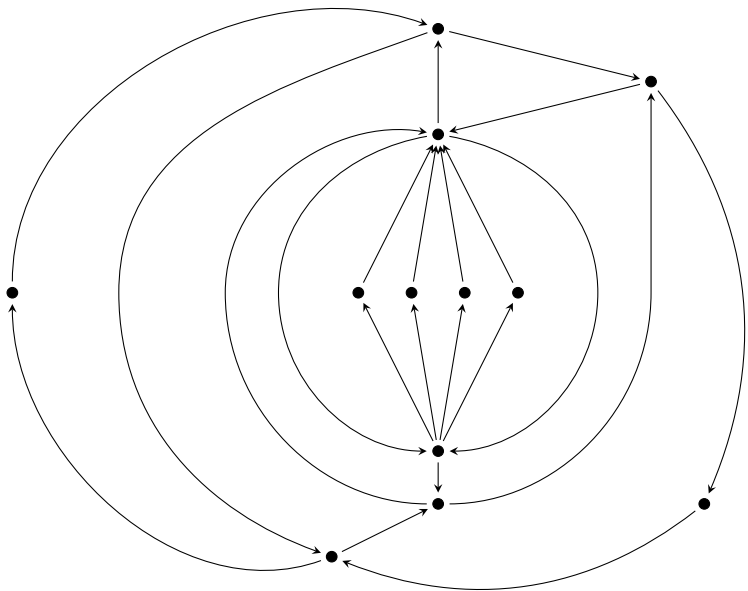

\end{ex}

\subsection{Examples from tensor products of quivers}

The following family of self-injective QPs was introduced in \cite[\S5.2]{HI11b}. Let us recall their definition.

Given two quivers $Q^1,Q^2$ without oriented cycles we can define a new quiver $Q=Q^1\tilde\otimes Q^2$ with $Q_0=Q^1_0 \times Q^2_0$ and $Q_1 = (Q^1_0 \times Q^2_1) \sqcup (Q^1_1 \times Q^2_1) \sqcup (Q^1_1 \times Q^2_0)$. The starting and ending points of the arrows of $Q$ are given by
$$\mathfrak{s}(\alpha,y)=(\mathfrak{s}(\alpha),y),\quad \mathfrak{s}(x,\beta)=(x,\mathfrak{s}(\beta)),\quad \mathfrak{s}(\alpha,\beta)=(\mathfrak{t}(\alpha),\mathfrak{t}(\beta)),$$
$$\mathfrak{t}(\alpha,y)=(\mathfrak{t}(\alpha),y),\quad \mathfrak{t}(x,\beta)=(x,\mathfrak{t}(\beta)),\quad \mathfrak{t}(\alpha,\beta)=(\mathfrak{s}(\alpha),\mathfrak{s}(\beta)),$$
for $x\in Q^1_0$, $y\in Q^2_0$, $\alpha\in Q^1_1$, $\beta\in Q^2_1$. We define a potential on $Q$ by
$$W = W^{\tilde\otimes}_{Q^1,Q^2} = \sum_{\alpha\in Q^1_1, \beta\in Q^2_1} (\alpha,\mathfrak{t}(\beta))(\mathfrak{s}(\alpha),\beta)(\alpha,\beta) - (\mathfrak{t}(\alpha),\beta)(\alpha,\mathfrak{s}(\beta))(\alpha,\beta).$$

Now we consider group actions on $\kk Q$. Let $G_1=\langle g_1\rangle$ and $G_2=\langle g_2\rangle$ be finite cyclic groups and suppose that the following condition holds:
\begin{enumerate}[label=($*$)]
  \item either one of $G_1$ or $G_2$ is trivial, or $G_1\cong G_2$. \label{ass:tensor}
\end{enumerate}
We denote by $n$ the maximum of the orders of $G_1$ and $G_2$. Let $G$ be the subgroup of $G_1\times G_2$ generated by $(g_1,g_2)$, and note that it is cyclic of order $n$.

\begin{lemma}\label{lem:tensor}
  Let $Q^1,Q^2,G_1,G_2$ as above. Suppose we have actions of $G_i$ on $\kk Q^i$, $i=1,2$, which satisfy the assumptions \ref{ass:field}, \ref{ass:permuted}, \ref{ass:cardinality}, \ref{ass:arrows}, and:
  \begin{enumerate}[label=(A3')]
  \item every arrow in $Q^i$ between two fixed vertices is fixed by $G_i$. \label{ass:superfixed}
\end{enumerate}
Then the induced action of $G$ on $\kk Q$ satisfies the assumptions \ref{ass:field}-\ref{ass:cycles}.
\end{lemma}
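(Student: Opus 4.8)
The plan is to verify the assumptions \ref{ass:field}--\ref{ass:cycles} one at a time for the induced action of $G=\langle(g_1,g_2)\rangle$ on $\kk Q$. This action is componentwise: a vertex $(x,y)$ goes to $(g_1x,g_2y)$, and the three families of arrows $(\alpha,y)$, $(x,\beta)$, $(\alpha,\beta)$ go to $(g_1\alpha,g_2y)$, $(g_1x,g_2\beta)$, $(g_1\alpha,g_2\beta)$ respectively. First I would record that the order of $(g_1,g_2)$ is $\on{lcm}(|G_1|,|G_2|)$, which under condition \ref{ass:tensor} equals $n=\max(|G_1|,|G_2|)$; assumption \ref{ass:field} then follows because the factor of order $n$ already supplies a primitive $n$-th root of unity in $\kk$.

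The key preliminary observation is that each $g_i$ maps \emph{every} arrow of $Q^i$ to an arrow with coefficient $1$: an arrow with a non-fixed endpoint does so by \ref{ass:arrows}, while an arrow between two fixed vertices is fixed by \ref{ass:superfixed}. Hence $g=(g_1,g_2)$ sends every arrow of $Q$ to an arrow, which yields \ref{ass:permuted} and \ref{ass:arrows} immediately. For \ref{ass:fixed} I would note that an arrow of $Q$ between two fixed vertices must, in each of the three families, arise from an arrow $\alpha$ (resp.\ $\beta$) of $Q^i$ between two fixed vertices, hence be fixed by \ref{ass:superfixed}; so such arrows are actually fixed by $G$, that is $b=0$. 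For \ref{ass:cardinality} I would compute the orbit of a vertex $(x,y)$: if $d_1,d_2$ are the orbit sizes of $x$ and $y$ (each $1$ or $|G_i|$ by \ref{ass:cardinality} for the factors), the $G$-orbit of $(x,y)$ has size $\on{lcm}(d_1,d_2)$, and when $(x,y)$ is not fixed at least one $d_i$ equals $|G_i|$; condition \ref{ass:tensor} then forces this lcm to be exactly $n$.

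For \ref{ass:potential} the plan is to show that $g$ merely permutes the summands of $W$. Writing $\alpha'=g_1\alpha$, $\beta'=g_2\beta$, and using that the action commutes with $\mathfrak s$ and $\mathfrak t$, one checks that $g$ sends $(\alpha,\mathfrak t(\beta))(\mathfrak s(\alpha),\beta)(\alpha,\beta)$ to $(\alpha',\mathfrak t(\beta'))(\mathfrak s(\alpha'),\beta')(\alpha',\beta')$ and similarly for the negative summands; since $(\alpha,\beta)\mapsto(\alpha',\beta')$ is a bijection of $Q^1_1\times Q^2_1$, the positive and negative families are each permuted and $g(W)=W$ in $\widehat{\kk Q}/\on{com}_Q$. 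Finally, \ref{ass:cycles} is essentially automatic: as $Q^1$ and $Q^2$ have no oriented cycles they have no loops, so every cycle of $W$ is a $3$-cycle through three \emph{distinct} vertices; its number of $G$-fixed vertices therefore lies in $\{0,1,2,3\}$, and these four values correspond exactly to types (i), (ii), (iii), (iv).

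I expect the only genuine bookkeeping, and hence the main obstacle, to be the orbit-cardinality count for \ref{ass:cardinality}, which is precisely where hypothesis \ref{ass:tensor} is used to exclude intermediate orbit sizes; everything else reduces to the single fact that both factors act on arrows without scalars, together with the fact that $W$ is built out of $3$-cycles.
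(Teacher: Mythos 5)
Your proposal is correct and follows essentially the same route as the paper's proof: verify the assumptions \ref{ass:field}--\ref{ass:cycles} one by one, using that both factor actions send arrows to arrows (so cycles of $W^{\tilde\otimes}_{Q^1,Q^2}$ are permuted with their coefficients, giving \ref{ass:potential}) and that every cycle of the potential has length $3$ (giving \ref{ass:cycles}). The only difference is that you spell out details the paper dismisses as immediate, such as the $\on{lcm}$ orbit count behind \ref{ass:cardinality} and the fact that arrows of $Q$ between fixed vertices are genuinely fixed, i.e.\ $b=0$ in \ref{ass:fixed}.
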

\begin{proof}
  Assumption \ref{ass:field} holds by the assumptions on the orders of $G_1$ and $G_2$.
  The assumptions \ref{ass:permuted}, \ref{ass:fixed} and \ref{ass:cardinality} follow immediately by hypothesis. Now note that $G$ permutes the cycles of the potential $W^{\tilde\otimes}_{Q^1,Q^2}$, and every cycle is sent to a cycle with the same coefficient. Hence $GW=W$. Finally, assumption \ref{ass:cycles} is satisfied because all cycles have length $3$.
\end{proof}

If $Q^1$ and $Q^2$ are Dynkin quivers with the same Coxeter number and which are stable under their canonical involutions (see \cite[\S5.2]{HI11b} for definitions), then $(Q,W)=(Q^1\tilde\otimes Q^2,W^{\tilde\otimes}_{Q^1,Q^2})$ is a self-injective QP by \cite[Proposition~5.1]{HI11b}.
Let $g_1$ and $g_2$ be the unique automorphisms of, respectively, $Q^1$ and $Q^2$ given by extending to arrows their canonical involutions.

\begin{prop}\label{prop:tensor}
Let $Q^1$ and $Q^2$ be Dynkin quivers which are stable under their canonical involutions and have the same Coxeter number. Let $G$ be the cyclic group generated by $(g_1,g_2)$ and consider the induced action of $G$ on $Q=Q^1\tilde\otimes Q^2$. Then $(Q_G,W_G)$ is a self-injective QP with enough cuts.
\end{prop}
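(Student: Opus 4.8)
The plan is to derive self-injectivity of $(Q_G,W_G)$ from that of $(Q,W)$ via Theorem~\ref{thm:main}, and to obtain enough cuts by exhibiting enough $G$-invariant cuts on $(Q,W)$ and invoking Lemma~\ref{lem:enough}.

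First I would note that since the canonical involutions $g_1,g_2$ have order at most $2$, condition~\ref{ass:tensor} holds and $G=\langle(g_1,g_2)\rangle$ is cyclic; moreover $(Q,W)$ is self-injective by \cite[Proposition~5.1]{HI11b}. Next I would verify the hypotheses of Lemma~\ref{lem:tensor} for the two factor actions: each $g_i$ is a genuine quiver automorphism, so it permutes vertices and sends arrows to arrows, giving \ref{ass:permuted} and \ref{ass:arrows}; an involution has orbits of size at most $2=|G_i|$, giving \ref{ass:cardinality}; and since Dynkin quivers have no multiple arrows, every arrow between two fixed vertices is fixed, giving \ref{ass:superfixed}. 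Condition \ref{ass:field} holds since $\kk=\C$. Lemma~\ref{lem:tensor} then shows that the induced $G$-action on $\kk Q$ satisfies \ref{ass:field}-\ref{ass:cycles}, so Theorem~\ref{thm:main} gives $\mathcal P(Q_G,W_G)\cong\eta(\Lambda G)\eta$ with $\Lambda=\mathcal P(Q,W)$. As the latter is Morita equivalent to $\Lambda G$, which is self-injective by \cite[Theorem~1.3(c)(iii)]{RR85}, and self-injectivity is Morita invariant, $\mathcal P(Q_G,W_G)$ is self-injective.

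It then remains to produce enough $G$-invariant cuts on $(Q,W)$, and this is the combinatorial heart of the argument. I would use the canonical partition of the arrows of $Q=Q^1\tilde\otimes Q^2$ into the three types
\[
C^1=Q^1_1\times Q^2_0,\qquad C^2=Q^1_0\times Q^2_1,\qquad C^3=Q^1_1\times Q^2_1,
\]
and check, by inspecting the two families of $3$-cycles defining $W^{\tilde\otimes}_{Q^1,Q^2}$, that every cycle meets each $C^j$ in exactly one arrow; hence each $C^j$ is a cut, and since the $C^j$ partition $Q_1$, every arrow lies in one of them. Each $C^j$ is preserved by the diagonal action of $(g_1,g_2)$, because $g_1$ and $g_2$ map arrows to arrows and vertices to vertices and thus preserve the type of an arrow, so each $C^j$ is a $G$-invariant cut and $(Q,W)$ has enough $G$-invariant cuts. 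Lemma~\ref{lem:enough} then yields that $(Q_G,W_G)$ has enough cuts. I expect the only real (though routine) work to be the type bookkeeping for the cycles of $W^{\tilde\otimes}_{Q^1,Q^2}$ and the observation that the $*$-action agrees with the genuine $G$-action on these arrows, the remaining steps being direct applications of the cited results.
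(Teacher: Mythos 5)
Your proposal is correct and follows essentially the same route as the paper: verify the hypotheses of Lemma~\ref{lem:tensor} for the factor actions (the paper calls this "immediately checked"), deduce self-injectivity of $(Q_G,W_G)$ from self-injectivity of $(Q,W)$ via \cite[Proposition~5.1]{HI11b} and the skew group algebra construction, and exhibit the same three $G$-invariant cuts $Q^1_1\times Q^2_0$, $Q^1_0\times Q^2_1$, $Q^1_1\times Q^2_1$ partitioning $Q_1$, concluding by Lemma~\ref{lem:enough}. The only difference is that you spell out details the paper leaves implicit.
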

\begin{proof}
  Note that $g_1$ and $g_2$ have order either $1$ or $2$, so the condition \ref{ass:tensor} for $G_1=\langle g_1\rangle$ and $G_2=\langle g_2\rangle$ is satisfied.
  The assumptions \ref{ass:field}, \ref{ass:permuted}, \ref{ass:superfixed}, \ref{ass:cardinality}, and \ref{ass:arrows} for $G_1$ and $G_2$ are immediately checked, so by Lemma~\ref{lem:tensor} we can apply the construction of Section~\ref{sec:setup} to $(Q,W)$ and $G$. By \cite[Proposition~5.1]{HI11b} $(Q,W)$ is self-injective, hence so is $(Q_G,W_G)$.

From the definition of $W^{\tilde\otimes}_{Q^1,Q^2}$ it follows that the subsets $(Q^1_0,Q^2_1)$, $(Q^1_1,Q^2_1)$ and $(Q^1_1,Q^2_0)$ of $Q_1$ are all $G$-invariant cuts. Since every arrow of $Q$ is contained in one of them, we have that $(Q,W)$ has enough $G$-invariant cuts. Hence $(Q_G,W_G)$ has enough cuts by Lemma~\ref{lem:enough}.
\end{proof}

\begin{ex}\label{ex:tensor}
Consider the following Dynkin quivers:

\begin{tikzpicture}[->,scale=2,shape=circle,>=stealth,inner sep=1.5,outer sep=2]
\node[fill] (1) {};
\node[fill,right of=1] (2) {};
\node[fill,right of=2] (3) {};
\node[fill,right of=3] (4) {};
\node[fill,right of=4] (5) {};
\node[left of=1] (a) {$Q^1\colon$};

\draw (2) to (1);
\draw (3) to (2);
\draw (3) to (4);
\draw (4) to (5);

\begin{scope}[xshift=5cm,scale=0.5]
\node[fill] (1) {};
\node[fill] (2) at ([shift={(180:1)}]1)  {};
\node[fill] (3) at ([shift={(300:1)}]1)  {};
\node[fill] (4) at ([shift={(60:1)}]1)  {};
\node[left of=2] (a) {$Q^2\colon$};

\draw (1) to (2);
\draw (1) to (3);
\draw (1) to (4);
\end{scope}
\end{tikzpicture}
\vspace{.5cm}

Here $Q^1$ is of type ${\rm A}_5$ and $Q^2$ of type ${\rm D}_4$, so they have the same Coxeter number. The canonical involution of $Q^1$ is the reflection with respect to the central vertex, while the one of $Q^2$ is the identity. Hence the two quivers are stable and, by Proposition~\ref{prop:tensor}, $(Q_G,W_G)$ is a self-injective QP with enough cuts. The quivers $Q$ and $Q_G$ are illustrated respectively in Figures~\ref{fig:tensor} and \ref{fig:sga_tensor}.

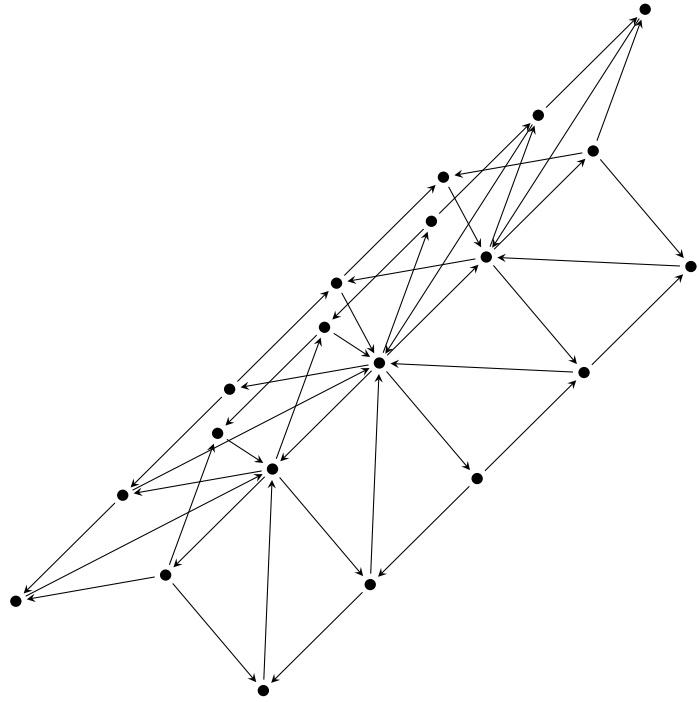
\begin{figure}[h]
\centering
\begin{tikzpicture}[->,scale=2,shape=circle,>=stealth,inner sep=1.5,outer sep=2]
\pgfmathtruncatemacro{\xscale}{20}
\pgfmathtruncatemacro{\yscale}{20}
\foreach \i in {0,...,4}
{
\begin{scope}[xshift=\i*\xscale, yshift=\i*\yscale, rotate=10]
  \node[fill] (z\i) {};
  \node[fill] (a\i) at ([shift={(180:1)}]z\i)  {};
  \node[fill] (b\i) at ([shift={(300:1)}]z\i)  {};
  \node[fill] (c\i) at ([shift={(60:1)}]z\i)  {};
  \draw (z\i) -- (a\i);
  \draw (z\i) -- (b\i);
  \draw (z\i) -- (c\i);
\end{scope}
}
\foreach \i in {0,...,1}
{
  \pgfmathtruncatemacro{\ip}{\i+1}
  \draw (z\ip) -- (z\i);
  \draw (a\ip) -- (a\i);
  \draw (b\ip) -- (b\i);
  \draw (c\ip) -- (c\i);
  \draw (a\i) -- (z\ip);
  \draw (b\i) -- (z\ip);
  \draw (c\i) -- (z\ip);
}
\foreach \i in {3,...,4}
{
  \pgfmathtruncatemacro{\im}{\i-1}
  \draw (z\im) -- (z\i);
  \draw (a\im) -- (a\i);
  \draw (b\im) -- (b\i);
  \draw (c\im) -- (c\i);
  \draw (a\i) -- (z\im);
  \draw (b\i) -- (z\im);
  \draw (c\i) -- (z\im);
}
\end{tikzpicture}
\caption{The quiver $Q^1\tilde\otimes Q^2$.} \label{fig:tensor}
\end{figure}

\begin{figure}[h]
\centering
\begin{tikzpicture}[->,scale=2,shape=circle,>=stealth,inner sep=1.5,outer sep=2]
\begin{scope}[xshift=0, yshift=0, rotate=10]
  \node[fill] (z0) {};
  \node[fill] (a0) at ([shift={(180:1)}]z0)  {};
  \node[fill] (b0) at ([shift={(300:1)}]z0)  {};
  \node[fill] (c0) at ([shift={(60:1)}]z0)  {};
  \draw (z0) -- (a0);
  \draw (z0) -- (b0);
  \draw (z0) -- (c0);
\end{scope}
\begin{scope}[xshift=30, yshift=35, rotate=10]
  \node[fill] (z1) {};
  \node[fill] (a1) at ([shift={(180:1)}]z1)  {};
  \node[fill] (b1) at ([shift={(300:1)}]z1)  {};
  \node[fill] (c1) at ([shift={(60:1)}]z1)  {};
  \draw (z1) -- (a1);
  \draw (z1) -- (b1);
  \draw (z1) -- (c1);
\end{scope}
\begin{scope}[xshift=65, yshift=50, rotate=0]
  \node[fill] (z2) {};
  \node[fill] (a2) at ([shift={(180:1)}]z2)  {};
  \node[fill] (b2) at ([shift={(300:1)}]z2)  {};
  \node[fill] (c2) at ([shift={(60:1)}]z2)  {};
  \draw (z2) -- (a2);
  \draw (z2) -- (b2);
  \draw (z2) -- (c2);
\end{scope}
\begin{scope}[xshift=0, yshift=60, rotate=0]
  \node[fill] (z3) {};
  \node[fill] (a3) at ([shift={(180:1)}]z3)  {};
  \node[fill] (b3) at ([shift={(300:1)}]z3)  {};
  \node[fill] (c3) at ([shift={(60:1)}]z3)  {};
  \draw (z3) -- (a3);
  \draw (z3) -- (b3);
  \draw (z3) -- (c3);
\end{scope}

\foreach \i in {0,...,1}
{
  \pgfmathtruncatemacro{\ip}{\i+1}
  \draw (z\ip) -- (z\i);
  \draw (a\ip) -- (a\i);
  \draw (b\ip) -- (b\i);
  \draw (c\ip) -- (c\i);
  \draw (a\i) -- (z\ip);
  \draw (b\i) -- (z\ip);
  \draw (c\i) -- (z\ip);
}

  \draw (z3) -- (z1);
  \draw (a3) -- (a1);
  \draw (b3) -- (b1);
  \draw (c3) -- (c1);
  \draw (a1) -- (z3);
  \draw (b1) -- (z3);
  \draw (c1) -- (z3);
\end{tikzpicture}
\caption{The quiver $(Q^1\tilde\otimes Q^2)_G$.} \label{fig:sga_tensor}
\end{figure}
\end{ex}

All examples we have illustrated so far are related to self-injective QPs. In the next one we will consider a case where the QP we start with is not self-injective.

\begin{ex}\label{ex:nonselfinj}
Consider the Dynkin quivers

\begin{tikzpicture}[->,scale=2,shape=circle,>=stealth,inner sep=0.1]
\node (1) {$2$};
\node[right of=1] (2) {$1$};
\node[right of=2] (3) {$0$};
\node[right of=3] (4) {$1'$};
\node[right of=4] (5) {$2'$};
\node[left of=1] (a) {$Q^1\colon$};

\draw (2) to node[midway,above]{$\beta$} (1);
\draw (3) to node[midway,above]{$\alpha$} (2);
\draw (3) to node[midway,above]{$\alpha'$} (4);
\draw (4) to node[midway,above]{$\beta'$} (5);

\begin{scope}[xshift=4.5cm]
\node (1) {$0$};
\node[right of=1] (2) {$1$};
\node[right of=2] (3) {$2$};
\node[left of=1] (a) {$Q^2\colon$};

\draw (3) to node[midway,above]{$\gamma_2$} (2);
\draw (2) to node[midway,above]{$\gamma_1$} (1);
\end{scope}
\end{tikzpicture}

and let $Q=Q^1\tilde\otimes Q^2$ (see Figure~\ref{fig:ausl}).

\begin{figure}[h]
\centering
\begin{tikzpicture}[->,scale=3,shape=circle,>=stealth,inner sep=0.1]
\begin{scriptsize}
\pgfmathtruncatemacro{\xscale}{30}
\foreach \i in {0,...,2}
{
\begin{scope}[xshift=\i*\xscale,node distance=2*\xscale]
  \node (2\i) {$(2,\i)$};
  \node[below of=2\i] (1\i) {$(1,\i)$};
  \node[below of=1\i] (0\i) {$(0,\i)$};
  \node[below of=0\i] (1'\i) {$(1',\i)$};
  \node[below of=1'\i] (2'\i) {$(2',\i)$};
  \draw (0\i) -- node[midway,left]{$(\alpha,\i)$} (1\i);
  \draw (0\i) -- node[midway,left]{$(\alpha',\i)$} (1'\i);
  \draw (1\i) -- node[midway,left]{$(\beta,\i)$} (2\i);
  \draw (1'\i) -- node[midway,left]{$(\beta',\i)$} (2'\i);
\end{scope}
}
\foreach \i in {0,...,1}
{
  \pgfmathtruncatemacro{\ip}{\i+1}
  \draw (2\ip) -- node[midway,above=-7]{$(2,\gamma_\ip)$} (2\i);
  \draw (1\ip) -- node[midway,above=-7]{$(1,\gamma_\ip)$} (1\i);
  \draw (0\ip) -- node[midway,above=-7]{$(0,\gamma_\ip)$} (0\i);
  \draw (1'\ip) -- node[midway,above=-7]{$(1',\gamma_\ip)$} (1'\i);
  \draw (2'\ip) -- node[midway,above=-7]{$(2',\gamma_\ip)$} (2'\i);
  \draw (2\i) -- node[midway,above]{$(\beta,\gamma_\ip)$} (1\ip);
  \draw (1\i) -- node[midway,above]{$(\alpha,\gamma_\ip)$} (0\ip);
  \draw (2'\i) -- node[midway,above]{$(\alpha',\gamma_\ip)$} (1'\ip);
  \draw (1'\i) -- node[midway,above]{$(\beta',\gamma_\ip)$} (0\ip);
}
\end{scriptsize}
\end{tikzpicture}
\caption{The quiver $Q^1\tilde\otimes Q^2$.} \label{fig:ausl}
\end{figure}

Let $g$ be the unique automorphism of $Q^1$ given on vertices by $g(0)=0$, $g(i)=i'$ and $g(i')=i$, $i=1,2$. Then we can consider the action of the cyclic group $G=\langle(g,\on{id})\rangle$ of order $2$ on $Q$. If we apply the construction of Section~\ref{sec:setup} choosing as a set of representatives of the vertices $\cal E = \{(i,j) \,|\, i,j=0,1,2\}$, then we obtain the quiver $Q_G$ of Figure~\ref{fig:sga_ausl}. We can take
$$\cal C = \{(\alpha,i-1)(0,\gamma_i)(\alpha,\gamma_i), (1,\gamma_i)(\alpha,i)(\alpha,\gamma_i), (\beta,i-1)(0,\gamma_i)(\beta,\gamma_i), (1,\gamma_i)(\beta,i)(\beta,\gamma_i) \,|\, i=1,2\}$$
and obtain the potential
\begin{align*}
  W_G  &= \sum_{i=1}^2 \widetilde{(\beta,i-1)}\widetilde{(0,\gamma_i)}\widetilde{(\beta,\gamma_i)} - \widetilde{(1,\gamma_i)}\widetilde{(\beta,i)}\widetilde{(\beta,\gamma_i)} + \\
  &+ \sum_{i=1}^2 \sum_{\mu=0}^1 \widetilde{(\beta,i-1)}\widetilde{(0,\gamma_i)}^\mu\widetilde{(\beta,\gamma_i)} - \widetilde{(1,\gamma_i)}^\mu\widetilde{(\beta,i)}\widetilde{(\beta,\gamma_i)}.
\end{align*}

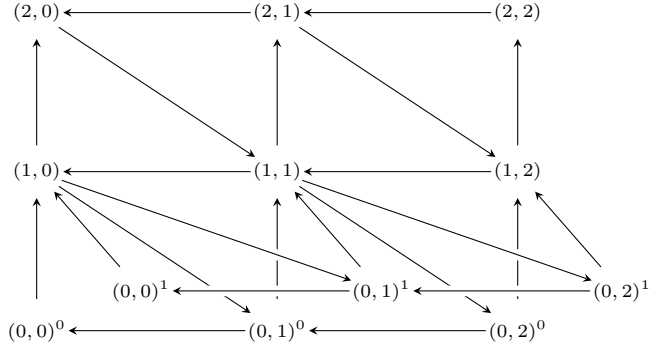
\begin{figure}[H]
\centering
\begin{tikzpicture}[->,scale=3,shape=circle,>=stealth,inner sep=0.1]
\begin{scriptsize}
\pgfmathtruncatemacro{\xscale}{30}
\foreach \i in {0,...,2}
{
\begin{scope}[xshift=\i*\xscale,node distance=2*\xscale]
  \node (0\i) {$(0,\i)^0$};
  \node[above of=0\i] (1\i) {$(1,\i)$};
  \node[above of=1\i] (2\i) {$(2,\i)$};
  \draw (0\i) -- (1\i);
  \draw (1\i) -- (2\i);
  \begin{scope}[xshift=13,yshift=5]
    \node (0'\i) {$(0,\i)^1$};
  \end{scope}
\end{scope}
}
\foreach \i in {0,...,1}
{
  \pgfmathtruncatemacro{\ip}{\i+1}
  \draw (2\ip) -- (2\i);
  \draw (1\ip) -- (1\i);
  \draw (0\ip) -- (0\i);
  \draw (2\i) -- (1\ip);
  \draw (1\i) -- (0\ip);
}
\foreach \i in {0,...,2}
{
    \draw (0'\i) edge [-,line width=4pt,draw=white, shorten >= 2mm, shorten <= 2mm] (1\i);
    \draw (0'\i) -- (1\i);
}
\foreach \i in {0,...,1}
{
  \pgfmathtruncatemacro{\ip}{\i+1}
  \draw (0'\ip) edge [-,line width=4pt,draw=white] (0'\i);
  \draw (0'\ip) -- (0'\i);
  \draw (1\i) edge [-,line width=4pt,draw=white] (0'\ip);
  \draw (1\i) -- (0'\ip);
}
\end{scriptsize}
\end{tikzpicture}
\caption{The quiver $(Q^1\tilde\otimes Q^2)_G$.} \label{fig:sga_ausl}
\end{figure}

\begin{rmk}
We may choose another basis for $\on{rad} \cal P(Q,W) / \on{rad}^2 \cal P(Q,W)$ by replacing $(\alpha',i)$ with $-(\alpha',i)$ and $(\beta',i)$ with $-(\beta',i)$, $i=0,1,2$.
In this way we get that $\cal P(Q,W) \cong \cal P(Q,W')$, where $W'$ is the potential defined as the sum of all the clockwise $3$-cycles minus the sum of all the anticlockwise ones.
We have an action of $G$ on $\cal P(Q,W')$ such that $\cal P(Q,W)G \cong \cal P(Q,W')G$, but note that in this case the assumption \ref{ass:arrows} is no longer satisfied.

Now let us consider the $G$-invariant cut $C=Q^0\times Q^1$ in $Q$. We may note that the truncated Jacobian algebra $\cal P(Q,W')_C$ is isomorphic to the Auslander algebra of $Q^1$.
Moreover, by Proposition~\ref{prop:cut_truncated} and what we observed above, we have that $(\cal P(Q,W)_C)G \cong (\cal P(Q,W')_C)G$ is Morita equivalent to $\cal P(Q_G,W_G)_{C_G}$. Notice that $\cal P(Q_G,W_G)_{C_G}$ is isomorphic to the Auslander algebra of a Dynkin quiver of type ${\rm D}_4$. This is no surprise, since we know by \cite[Theorem~1.3(c)(iv)]{RR85} that skew group algebras of Auslander algebras are again Auslander algebras.
\end{rmk}

\end{ex}

\newpage
\bibliographystyle{alpha}
\bibliography{Bibliography}

\end{document}